\font\tenmath=msbm10 \font\sevenmath=msbm7 \font\fivemath=msbm5
\font\teneusb=eusb10 \font\seveneusb=eusb7 \font\fiveeusb=eusb5
\def\eusb{\fam\eusbfam}
\def\frak R{\eusb R}
\font\tenams=msam10 \font\sevenams=msam7 \font\fiveams=msam5
\newcommand{\Logi}{{\mathrm{Log}_{P_{0}^{i}}^{G^{i}}}}
\newcommand{\Log}{{\mathrm{Log}}}
\newcommand{\Exp}{{\mathrm{Exp}}}
\newcommand{\tr}{{\mathrm{tr}}}
\newcommand{\rg}{{\mathrm{rg}}}
\newcommand{\ex}{{\mathrm{ex}}}
\newcommand{\ID}{{\mathrm{ID}}}
\newcommand{\Poi}{{P_{0}^{i}}}
\newcommand{\PoI}{{\mathcal{P}_{0}^{\mathrm{I}}}}
\newcommand{\Toi}{{\T_{0}^{i}}}
\newcommand{\FI}{{\mathcal{F}^{\mathrm{I}}}}
\newcommand{\GI}{{\mathrm{G}^{\mathrm{I}}}}
\newcommand{\St}{{\mathrm{St}}}
\newcommand{\MdR}{{\mathrm{M}_{d}\left(\mathbb{R}\right)}}
\newcommand{\Prob}{{\mathrm{Pr}}}
\newcommand{\rk}{{\mathrm{rk}}}
\newcommand{\Cut}{{\mathrm{Cut}}}
\newcommand{\R}{{\mathbb{R}}}
\newcommand{\N}{{\mathbb{N}}}
\newcommand{\T}{{\mathbb{T}}}
\newcommand{\M}{{\mathbb{M}}}
\newcommand{\K}{{\mathbb{K}}}
\newcommand{\E}{{\mathbb{E}}}
\newcommand{\B}{{\mathbb{B}}}
\newcommand{\G}{{\mathbb{G}}}
\newcommand{\V}{{\mathbb{V}}}
\newcommand{\W}{{\mathbb{W}}}
\newcommand{\bS}{{\mathbb{S}}}
\newcommand{\SnA}{{\mathbb{S}_{n}^{A}}}
\newcommand{\SnAstar}{{\left( \mathbb{S}_{n}^{A} \right)^{*}}}
\newcommand{\Sni}{{\mathbb{S}_{n}^{i}}}
\newcommand{\SniStar}{{\left(\mathbb{S}_{n}^{i}\right)^{*}}}
\newcommand{\Wni}{{\mathbb{W}_{n}^{i}}}
\newcommand{\Vni}{{V_{n}^{i}}}
\newcommand{\EnOfi}{{E_{n}^{(i)}}}
\newcommand{\IdOfi}{{I_{d}^{(i)}}}
\newcommand{\EnOfii}{{E_{n}^{(i,i)}}}
\newcommand{\eni}{{e_{n}^{i}}}
\newcommand{\gni}{{g_{n}^{i}}}
\newcommand{\hni}{{h_{n}^{i}}}
\newcommand{\pni}{{p_{n}^{i}}}
\newcommand{\goi}{{g_{0}^{i}}}
\newcommand{\Pni}{{\mathcal{P}_{n}^{i}}}
\newcommand{\Deltani }{{\Delta_{n}^{i}}}
\newcommand{\Cni}{{\mathcal{C}_{n}^{i}}}
\newcommand{\Kni}{{\mathcal{K}_{n}^{i}}}
\newcommand{\Phini}{{\Phi_{n}^{i}}}
\newcommand{\Gammani}{{\Gamma_{n}^{i}}}
\newcommand{\fAni}{{\mathfrak{A}_{n}^{i}}}
\newcommand{\Mni}{{\mathcal{M}_{n}^{i}}}
\newcommand{\Rni}{{\mathcal{R}_{n}^{i}}}
\newcommand{\Ani}{{A_{n}^{i}}}
\newcommand{\Gni}{{G_{n}^{i}}}
\newcommand{\Hni}{{H_{n}^{i}}}
\newcommand{\DoiI}{{\mathcal{D}_{0}^{i}(\mathrm{I})}}
\newcommand{\pii}{{\pi^{\mathrm{I}}}}
\newcommand{\cU}{{\mathcal{U}}}
\newcommand{\cV}{{\mathcal{V}}}
\newcommand{\cW}{{\mathcal{W}}}
\newcommand{\cG}{{\mathcal{G}}}
\newcommand{\cA}{{\mathcal{A}}}
\newcommand{\cB}{{\mathcal{B}}}
\newcommand{\cD}{{\mathcal{D}}}
\newcommand{\cE}{{\mathcal{E}}}
\newcommand{\cF}{{\mathcal{F}}}
\newcommand{\cI}{{\mathcal{I}}}
\newcommand{\cK}{{\mathcal{K}}}
\newcommand{\cM}{{\mathcal{M}}}
\newcommand{\cR}{{\mathcal{R}}}
\newcommand{\cL}{{\mathcal{L}}}
\newcommand{\cH}{{\mathcal{H}}}
\newcommand{\cP}{{\mathcal{P}}}
\newcommand{\cN}{{\mathcal{N}}}
\newcommand{\cS}{{\mathcal{S}}}
\newcommand{\fA}{{\mathfrak{A}}}
\newcommand{\fD}{{\mathfrak{D}}}
\newcommand{\wpi}{{\widetilde{\pi}}}
\newcommand{\wpsi}{{\overline{\psi}}}
\newcommand{\hK}{{\widehat{K}}}
\newcommand{\hgm}{{\widehat{\gamma}}}
\newcommand{\hF}{{\widehat{F}}}
\newcommand{\hS}{{\widehat{S}}}
\newcommand{\Dg}{{\mathrm{Diag}}}
\newcommand{\I}{{\mathrm{I}}}
\newcommand{\rnm}{{n^{-1/2}}}
\newcommand{\lra}{{\longrightarrow}}
\newcommand{\lmp}{{~\longmapsto~}}
\newcommand{\ra}{{\rightarrow}}
\newcommand{\Sym}{{\mathrm{Sym}_{d}}}
\newcommand{\Symdiff}{{\mathrm{Sym}_{d}^{\neq}}}
\newcommand{\ECM}{{\widehat{\Sigma}_{n}}}
\newtheorem{remark}{Remark}
\newtheorem{proposition}{Proposition}
\newtheorem{corollary}{Corollary}
\newtheorem{lemma}{Lemma}
\newtheorem{theorem}{Theorem}
\newtheorem{definition}{Definition}
\numberwithin{equation}{section}
\begin{document}

\author{Dimbihery Rabenoro and Xavier Pennec}

\title{A geometric framework for asymptotic inference of principal subspaces in PCA}

\date{}

\maketitle


\begin{abstract}
Consider data assumed to be iid samples from a multivariate Gaussian distribution whose covariance matrix has repeated eigenvalues. In our model, these eigenvalues are unknown but their multiplicities are supposed to be known. In this paper, we develop an asymptotic method to infer the collection of all principal subspaces together, i.e. the eigenspaces of this covariance matrix. Our approach is based on the geometry of the flag manifold to which the collection of all principal subspaces and our estimators of it belong.
\end{abstract}

\section{Introduction}

This article deals with the asymptotic study of principal component analysis (PCA), in the light of methods from Riemannian geometry. Throughout this paper, we assume that the data are iid samples from a \textit{Gaussian} random vector $X$ defined on a proba\-bility space $(\Omega, \cA, \Pr)$ and valued in $\R^{d}$, $d \geq 1$, whose covariance matrix $\Sigma$ has possibly \textit{repeated} eigenvalues. In PCA, the principal subspaces (PS's) are the eigenspaces of $\Sigma$ associated respectively to eigenvalues in decreasing order. The sequence of multiplicities of the latter is called the \textit{type} of $\Sigma$. Then, a classical problem is to estimate the PS's by deriving Central Limit Theorems (CLT's) involving the eigenvectors of sample covariance matrices. This question is addressed notably in the Anderson's celebrated paper \cite{Anderson 1963}. The matrix $\Sigma$ may have repeated eigenvalues, although the set of symmetric matrices having multiple eigenvalues is negligible. In fact, it is implicit in \cite{Anderson 1963} that $\Sigma$ is obtained by equalizing beforehand nearly equal but
distinct eigenvalues of a sample covariance matrix. In \cite{Szwagier and Pennec 2024}, a principled procedure is developed to equalize such eigenvalues and thus to select the type of $\Sigma$. 

The method of \cite{Anderson 1963} yields only an estimation of each PS separately and when its dimension is equal to $1$. This result is improved in \cite{Tyler 1981} where other CLT's, issued from Perturbation Theory \cite{Kato 1995}, provide an estimation of each PS of any dimension. However, each PS remains to be addressed separately. 
In contrast, our geometric method allows us notably to estimate the \textit{collection} of all PS together, regardless of their dimensions. Thus, the main novelty of our paper is to build statistics based on the Riemannian geometry of manifolds of linear subspaces of $\R^{d}$. Namely, each PS belongs to a Grassmannian, while the collection of them lies in a \textit{flag manifold}. Here, a flag is viewed as a collection of mutually orthogonal subspaces spanning $\R^{d}$, whose sequence of dimensions is also called the \textit{type} of the flag. This incremental subspaces representation of a flag is equivalent to the more usual one as nested sequence of linear subspaces. In our PCA setting, we denote by $\lambda_{1} > ... > \lambda_{r}$ the distinct eigenvalues of $\Sigma$, of respective multiplicities $q_{1}, ..., q_{r}$. Thus, the collection of all PS's forms a flag of type $\I=(q_{i})_{1 \leq i \leq r}$, denoted by $F^{\I}(\Sigma)$. Let $\FI$ be the set of all flags of type $\I=(q_{i})_{i}$, so that $F^{\I}(\Sigma) \in \FI$. It is well-known that $\FI$ is endowed with a structure of homogeneous space for the orthogonal group $O(d)$: see \cite{Rabenoro and Pennec 2024}. The introduction of such manifolds in a PCA setting is notably inspired by \cite{Pennec 2018}, where it is proved that PCA can be rephrased as an optimization on a flag manifold. We also refer to \cite{Mankovich Camps-Valls and Birdal 2024} for the use of flag manifolds for studying PCA. 

A first idea to estimate $F^{\I}(\Sigma)$ could be to exploit the structure of Riemannian manifold on $\FI$ introduced in \cite{Rabenoro and Pennec 2024} and, following \cite{Bhattacharya and Patrangenaru 2005}, to establish a CLT in $\FI$ in normal coordinates i.e. defined from the Riemannian Logarithm on $\FI$. However, the latter is \textit{unknown} in closed form. In fact, there is no known metric on $\FI$ for which an explicit expression of the Riemannian Logarithm is available. Here, the incremental subspaces representation of a flag allows us to overcome this lack, since it provides an embedding of $\FI$ within a product of Grassmanians. Indeed, the Riemannian Logarithm is available in closed form in any Grassmannian.

In fact, our method is a wide geometric extension of that of \cite{Anderson 1963}. For $n \geq 1$, let $\ECM$ be the sample covariance matrix from an iid sample of size $n$ of $X$. Then, in \cite{Anderson 1963}, a matrix $E_{n} \in O(d)$ of the form $E_{n}=\Gamma' C_{n}$ is considered, where $\Gamma$ and $C_{n}$ are respectively matrices of eigenvectors of $\Sigma$ and $\ECM$, such that all diagonal entries of $E_{n}$ are positive (See Section $\ref{sec2}$ for precise definitions). So, $E_{n}$ measures the deviation between the PS's and the eigenvectors of $\ECM$. Then, the asymptotic distribution of $E_{n}$ is described in \cite{Anderson 1963}. Thus, the diagonal blocks of size $q_{i} \times q_{i}$ of $E_{n}$ converge in distribution to a Haar measure on the set of orthogonal $q_{i} \times q_{i}$ matrices whose diagonal entries are positive. The other blocks of $E_{n}$ converge to Gaussian distributions, which provide Central Limit Theorems (CLT's) that allow to infer the PS's.

 Instead of considering the usual coordinates of $E_{n}$, we introduce a \textit{geometric splitting} of $E_{n}$ into two parts, for which we recover respectively Gaussian distributions and Haar measures as $n \ra \infty$. The first part yields CLT's from which we deduce our estimation of $F^{\I}(\Sigma)$. The second part provides statistics that are valued, for all $n \geq 1$, in the orthogonal groups $(O(q_{i}))_{i}$, contrarily to the diagonal blocks of $E_{n}$. We derive such a geometric splitting of any orthogonal matrix from the structure of \textit{Riemannian submersion} of the map which associates to a $q$-frame the $q$-linear subspace spanned by it, for any $1 \leq q \leq d$. \\[-2mm]

We describe briefly hereafter our method to infer $F^{\I}(\Sigma)$. Thanks to the explicit expression of the Riemannian Logarithm in any Grassmannian, we establish a CLT in a Grassmannian for each PS separately. Then, contrarily to the CLT's for each PS in \cite{Tyler 1981}, some geometric properties of Grassmannians allow us to \textit{concatenate} our CLT's to obtain a pivotal statistic for $F^{\I}(\Sigma)$, whose limiting distribution is a $\chi^{2}$ one. Namely, recall that a.s., for all $n \geq 1$, the eigenvalues of $\ECM$ are all distinct. Then, one can construct a flag of type $\I$ from the eigenvectors of $\ECM$ as follows: those associated to the $q_{1}$ largest eigenvalues are grouped together, then those to the next $q_{2}$ ones, ..., and finally those to the $q_{r}$ smallest ones. Then, the linear subspaces generated by each group form a flag $F^{\I}(\ECM) \in \FI$ which may be compared to $F^{\I}(\Sigma)$. Thus, we prove a result of the form  
\begin{equation}\label{mainCV}
\frac{n}{4} \left[ \widetilde{\fD}_{\widehat{\K}_{n}} \left( F^{\I}\left(\Sigma \right), F^{\I}(\ECM ) \right) \right]^{2} \xrightarrow[n \rightarrow \infty]{d} \chi^{2}_{\mathrm{D}^{\I}},  
\end{equation}

\noindent
where $\widetilde{\fD}_{\widehat{\K}_{n}} \left( \cdot, \cdot \right)$ is a discrepancy function on $\FI$, indexed by a statistic $\widehat{\K}_{n}$ which is a function of the spectrum of   
$\ECM$, and $\mathrm{D}^{\I}=\frac{1}{2}\left( d^{2}-\sum_{i} q_{i}^{2} \right)$. The convergence in Eq.(\ref{mainCV}) is illustrated numerically on synthetic experiments. Finally, we derive, from Eq.(\ref{mainCV}), confidence regions for $F^{\I}\left(\Sigma \right)$ that are interpreted as confidence ellipsoids in $\FI$, centered at $F^{\I}(\ECM )$. 

Thus, our confidence regions are defined directly in the manifold $\FI$. In contrast, the confidence regions usually met in the literature on estimation on manifolds are defined in a chart, as in \cite{Bhattacharya and Patrangenaru 2005}. Furthermore, in \cite{Tyler 1981}, for fixed $1 \leq i \leq r$, the confidence regions for the principal subspace $P_{i}(\Sigma)$ are based on a $\chi^{2}_{q_{i}(d-q_{i})}$ distribution. Then, the sum of their degrees of freedom is $\sum_{i} q_{i}(d-q_{i}) = 2\mathrm{D}^{\I}$, so that we obtain a gain of a factor of $2$. \\[-2mm]

This article is organized as follows. In Section $\ref{sec2}$, we present Anderson's results and a brief sketch of their proofs. In Section $\ref{sec3}$, we develop the background of geometry needed for the rest of the paper and we present notably the aforementioned geometric splitting of $E_{n}$. Then, in Section $\ref{sec4}$, we state our main result in Theorem $\ref{theoCLTgrass}$, which provides the limiting distribution of the statistics issued from our splitting of $E_{n}$ and we deduce the estimation of the flag $F^{\I}\left(\Sigma \right)$. Finally, in Section $\ref{sec5}$, we summarize our results and propose some open questions raised by them. The proof of Theorem $\ref{theoCLTgrass}$ is deferred to the Appendix.

\section{Review of Anderson's results}\label{sec2}

We keep the notations of the Introduction. We fix a matrix $\Gamma \in O(d)$ of eigenvectors of $\Sigma$, associated respectively to $\lambda_{1} , ... , \lambda_{r}$. Then, $\Gamma' \Sigma \Gamma = \Delta$ where $\Delta$ is the following $d \times d$ block-diagonal matrix: $\Delta := \Dg \left(\lambda_{1}I_{q_{1}} , ... , \lambda_{r}I_{q_{r}} \right)$. 

\subsection{Preliminaries}\label{preliminariesAnderson}

For $1 \leq i \leq r-1$, set $\overline{q}_{i} := \sum\limits_{j=1}^{i} q_{j}$. The sequence $\I=(q_{i})_{1 \leq i \leq r}$ induces a partition of the indices $\left\{1, ..., d\right\}$ into $r$ blocks $(\beta_{i})_{1 \leq i \leq r}$ as follows: 
\begin{equation*}
\beta_{1} = \left\{1, ..., q_{1} \right\} ~,~ .... ~,~ \beta_{i} = \left\{\overline{q}_{i}+1, ..., \overline{q}_{i+1} \right\} ~,~ ... ~,~  
\beta_{r} = \left\{\overline{q}_{r-1}+1, ..., d \right\}.
\end{equation*}

\noindent
The sequence $(\beta_{i})_{1 \leq i \leq r}$ is called the partition of $\left\{1, ..., d\right\}$ wrt $\I$. Let $A=(a_{k\ell})_{1 \leq k \leq \ell \leq d}$ be a $d \times d$ matrix. For fixed $1 \leq i \leq j \leq r$, consider the submatrix $A^{(i,j)}:=(a_{k\ell})_{k \in \beta_{i}, \ell \in \beta_{j}}$. Let $A^{(i)}$ be the $i$-th block of columns of $A$, i.e. $A^{(i)} = \begin{pmatrix} A^{(1,i)} & ... & A^{(r,i)} \end{pmatrix}'$. Thus, setting $I_{d}^{(i)}:=\left( I_{d}\right)^{(i)}$, 
\begin{equation}\label{ithBlock}
A^{(i)}=AI_{d}^{(i)} 
\quad \textrm{and} \quad 
A^{(i,j)}=\left(I_{d}^{(i)}\right)'AI_{d}^{(j)}.
\end{equation}

\begin{definition}\label{DefEigenvectorMap}
Let $\Sym$ be the set of all $d \times d$ symmetric matrices and $\Symdiff$ the subset of $\Sym$ of matrices whose eigenvalues are all distinct. For $S \in \Symdiff$, its spectrum is denoted by $\left\{ \mu_{1}(S) > ... > \mu_{d}(S) \right\}$. Let $\psi : \Symdiff \lra O(d)$ be the map such that for any $1 \leq k \leq d$, the $k$-th column of $\psi(S)$ is the eigenvector of norm $1$ associated to $\mu_{k}(S)$ whose $k$-th entry is non-negative. Then, the map $\psi$ is measurable and is called the eigenvector map. 
\end{definition}

\subsection{Statement of Anderson's results}

Assume that $\ECM \in \Symdiff$, which holds a.s. Set 
\begin{equation*}
E_{n} := \psi \left(T_{n} \right) \quad \textrm{where} \quad T_{n}:=\Gamma' \ECM \Gamma, \quad n \geq 1. 
\end{equation*}

\noindent
$E_{n}$ is well-defined, since $T_{n} \in \Symdiff$. Then, $C_{n}:=\Gamma E_{n}$ is an orthogonal matrix of eigenvectors of $\ECM$, also associated to eigenvalues in decreasing order. Thus, $E_{n}=\Gamma' C_{n}$ is the matrix mentioned in the Introduction, which measures the deviation between eigenvectors of $\Sigma$ (the PS's) and those of $\ECM$. The asymptotic distribution of $E_{n}=\Gamma' C_{n}$ is the main result of \cite{Anderson 1963}, stated in Theorem $\ref{MainAnderson}$ below, for which we need the following definition.

\begin{definition}\label{defHaar}
The Haar measure is the unique probability distribution on the compact group $O(d)$ that is invariant under right-multiplications. It is the uniform measure on $O(d)$. The conditional Haar invariant distribution is defined in \cite{Anderson 1963} as $2^{d}$ times the restriction of the Haar measure to the space of orthogonal matrices whose diagonal entries are non-negative. 
\end{definition}

\begin{theorem}\label{MainAnderson}
$(i)$ For $1 \leq i \leq r$, $E_{n}^{(i,i)} \xrightarrow[n \rightarrow \infty]{d} E^{i,i}$, where the distribution of $E^{i,i}$ is the conditional Haar invariant distribution.

\smallskip

\noindent
$(ii)$ For $n \geq 1$, set $F_{n} := \sqrt{n}E_{n}$. Then, for all $i \neq j$, $F_{n}^{(i,j)} \xrightarrow[n \rightarrow \infty]{d} F^{i,j}$, where the entries of $F^{i,j}$ are iid rv's $\mathcal{N}(0, \sigma_{i,j}^{2})$, of standard deviation $\sigma_{i,j} := 
\frac{\sqrt{\lambda_{i}\lambda_{j}}}{|\lambda_{i}-\lambda_{j}|}$.

\smallskip

\noindent
$(iii)$ The blocks $\left\{ E^{i,i}, F^{i,j} : 1 \leq i \leq r, 1 \leq j \leq r, i \neq j \right\}$ are mutually independent. 

\end{theorem}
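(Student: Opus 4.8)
The plan is to deduce the theorem from Theorem \ref{CLTsym} via a delta-method-type argument applied to the eigenvector map $\psi$, exploiting the block structure imposed by the eigenvalue multiplicities. First I would set up the perturbation framework: write $T_n = \Delta + n^{-1/2} U_n$ with $U_n \xrightarrow{d} U$ as in Theorem \ref{CLTsym}, and analyze how $E_n = \psi(T_n)$ depends on $U_n$ near the degenerate point $\Delta$. The key structural observation is that the map $S \mapsto \psi(S)$ is \emph{not} differentiable at $\Delta$ in the usual sense, because the eigenspaces of $\Delta$ associated with a repeated eigenvalue $\lambda_i$ are only determined up to an element of $O(q_i)$; this is exactly why the diagonal blocks $E_n^{(i,i)}$ do not converge to a constant but to a random orthogonal matrix. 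So I would separate the analysis into the diagonal blocks (which live at the ``slow'', $O(1)$ scale) and the off-diagonal blocks (which live at the ``fast'', $n^{-1/2}$ scale), and this separation of scales is the heart of the proof.

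For the off-diagonal blocks (part $(ii)$), I would use first-order eigenvector perturbation: for an eigenvector $c_k$ of $T_n$ associated to $\mu_k(T_n)$, with $k \in \beta_i$, one has to first order $c_k = e_k + \sum_{\ell \notin \beta_i} \frac{(U_n)_{\ell k}}{n^{1/2}(\lambda_i - \lambda_{j(\ell)})} e_\ell + O(n^{-1})$, where $j(\ell)$ is the block index of $\ell$ and the sum over $\ell \in \beta_i$ is absorbed into the diagonal-block rotation. Multiplying by $\sqrt{n}$ and reading off the $(i,j)$ block for $i \neq j$ gives $F_n^{(i,j)} = \sqrt{n} E_n^{(i,j)} \approx \frac{1}{\lambda_i - \lambda_j} U_n^{(i,j)} + o_P(1)$ (up to a possible transpose/sign depending on the ordering convention). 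Since the entries of $U^{(i,j)}$ are iid $\mathcal{N}(0, \lambda_i\lambda_j)$ by Theorem \ref{CLTsym}, those of $F^{i,j}$ are iid $\mathcal{N}(0, \lambda_i\lambda_j/(\lambda_i-\lambda_j)^2)$, which is $\sigma_{i,j}^2$; and the mutual independence of the $U^{(i,j)}$ for $i<j$ transfers directly. The positivity-of-diagonal normalization in the definition of $\psi$ only affects the diagonal blocks and signs within the $\beta_i$-diagonal, so it does not interfere with the off-diagonal limit. One must be careful that the perturbation expansion is uniform enough to justify the $o_P(1)$ remainder; this is where I would invoke the generalized $\delta$-method (Theorem \ref{AndersonCrit}).

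For the diagonal blocks (part $(i)$), the argument is via invariance rather than perturbation. The idea is that, conditionally on the eigenvalue configuration, the eigenvectors of $T_n$ within the near-degenerate block $\beta_i$ are asymptotically ``uniformly spread'' because the noise $U^{(i,i)}$ inside that block is, by Theorem \ref{CLTsym}, itself invariant under $O(q_i)$ conjugation (the entries of $U^{(i,i)}$ are iid Gaussian up to symmetry, so $U^{(i,i)}$ is a GOE-type matrix whose eigenvector matrix is Haar-distributed on $O(q_i)$). Concretely: the diagonal block $E_n^{(i,i)}$, after correcting for the off-diagonal contributions (which are $O(n^{-1})$ at this scale), equals the eigenvector matrix of $U_n^{(i,i)}$ with the sign normalization from $\psi$; since the eigenvector matrix of a GOE matrix is Haar on $O(q_i)$, imposing the positive-diagonal-entry normalization yields exactly the conditional Haar invariant distribution of Definition \ref{defHaar}. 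Part $(iii)$, the mutual independence of all the limiting blocks, follows by combining the independence of the $U^{(i,j)}$ across all $1 \le i \le j \le r$ from Theorem \ref{CLTsym} with the fact that the limit $E^{i,i}$ depends only on $U^{(i,i)}$ while $F^{i,j}$ depends only on $U^{(i,j)}$, plus a continuous-mapping / joint-convergence argument to promote marginal convergences to joint convergence.

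The main obstacle I expect is making the two-scale decoupling rigorous: one must show that the off-diagonal perturbation terms genuinely do not contaminate the diagonal-block limit at order $O(1)$, and conversely that the unknown (random, order-one) diagonal rotations do not spoil the $\sqrt{n}$-scale Gaussian limit of the off-diagonal blocks. This requires a careful bookkeeping of which terms in the eigenvector expansion are $O(1)$ versus $O(n^{-1/2})$ versus $O(n^{-1})$, and a uniform-in-probability control of the remainders so that Slutsky-type arguments apply; the generalized $\delta$-method of Theorem \ref{AndersonCrit} is precisely the tool designed to package this, so much of the work is in verifying its hypotheses for the map $\psi$ at the degenerate point $\Delta$.
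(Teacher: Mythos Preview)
Your overall strategy---reduce to Theorem~\ref{CLTsym} and apply the generalized $\delta$-method of Theorem~\ref{AndersonCrit}, separating the $O(1)$ diagonal blocks from the $O(n^{-1/2})$ off-diagonal blocks---is exactly the paper's approach. However, there is a genuine gap in your identification of the off-diagonal limit, and it propagates into your argument for part~$(iii)$.

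You write $F_{n}^{(i,j)} \approx \frac{1}{\lambda_{i}-\lambda_{j}} U_{n}^{(i,j)} + o_{P}(1)$ and then assert that ``$F^{i,j}$ depends only on $U^{(i,j)}$''. This is not correct. Because the diagonal block $E_{n}^{(j,j)}$ is a random rotation of order~$1$ (not converging to the identity), the columns of $E_{n}$ indexed by $\beta_{j}$ are mixed by this rotation, and the $(i,j)$ block inherits it. The paper's Proposition~\ref{AndersonPseudoCont} makes this explicit: the actual limit function satisfies $\cF^{i,j}(\cU)\,(\cE^{j,j}(\cU))' = -\frac{1}{\lambda_{i}-\lambda_{j}}\cU^{(i,j)}$, so that
\[
\cF^{i,j}(U) \;=\; -\frac{1}{\lambda_{i}-\lambda_{j}}\, U^{(i,j)}\,\cE^{j,j}(U),
\]
which depends on both $U^{(i,j)}$ and $U^{(j,j)}$ (through $\cE^{j,j}$). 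Your perturbation expansion $c_{k}=e_{k}+\cdots$ is already wrong at leading order when $q_{i}>1$: the eigenvector $c_{k}$ is not close to $e_{k}$ but to a random unit vector in $\mathrm{span}\{e_{\ell}:\ell\in\beta_{i}\}$, and this order-one rotation multiplies the off-diagonal coefficients.

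The \emph{distribution} of $\cF^{i,j}(U)$ is still $\cN(0,\sigma_{i,j}^{2})^{\otimes q_{i}q_{j}}$, but only because the rows of $U^{(i,j)}$ are i.i.d.\ isotropic Gaussian vectors and $\cE^{j,j}(U)$ is independent of $U^{(i,j)}$, so right-multiplication by $\cE^{j,j}(U)$ leaves the law unchanged. Likewise, the independence in~$(iii)$ does \emph{not} follow directly from ``$F^{i,j}$ is a function of $U^{(i,j)}$ alone''; one must argue that $U^{(i,j)}\cE^{j,j}(U)$ is independent of $\cE^{j,j}(U)$ (and of all the other blocks), which is again a rotational-invariance argument. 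This is precisely the content of Lemma~\ref{lawLimitFunctions} in the paper, and it is the step your proposal skips over.
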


\begin{remark}
As pointed out in the Introduction, Theorem $\ref{MainAnderson}$ allows to estimate separately each PS's, but only when its dimension is equal to $1$: See Appendix B in \cite{Anderson 1963}. 
\end{remark}

\subsection{Method for the proof of Theorem $\ref{MainAnderson}$}\label{methodAndersonProof}

\subsubsection{Preliminary results}

The method consists of propagating the uncertainty of the estimation of $\widehat{\Sigma}_{n}$ by $\Sigma$, provided by Theorem $\ref{CLTsym}$ hereafter which is obtained in \cite{Anderson 1963}, to derive the limiting distribution of the deviation between their respective eigenvectors measured by $E_{n}=\Gamma' C_{n}$. This propagation is realized by a generalized $\delta$-method stated in  Theorem $\ref{AndersonCrit}$ below which can be found in Section 7 of \cite{Anderson 1963}. A more actual reference is section 18.11 in \cite{Van der Vaart 2000}. In fact, our version extends slightly the latter by adding Condition $(\ref{VnSnStar})$ in the statement, because of constraints imposed by our geometric method.

\begin{theorem}\label{CLTsym}
Set $T_{n}:=\Gamma' \ECM \Gamma$. Then, the following CLT in $\Sym$ holds. 
\begin{equation*}
U_{n} := \sqrt{n}\left( T_{n} - \Delta \right)
\xrightarrow[n \rightarrow \infty]{d} U, 
\end{equation*}

\noindent
where $U$ is a random matrix whose distribution is characterized as follows: $U \in \mathrm{Sym}_{d}$, the blocks $\left\{ U^{(i,j)} : 1 \leq i < j \leq r \right\}$ are mutually independent and for $i \neq j$, the entries of $U^{(i,j)}$ are iid rv's $\mathcal{N}\left(0, s_{i,j}^{2}\right)$, of standard deviation $s_{i,j} := \sqrt{\lambda_{i}\lambda_{j}}$.
\end{theorem}

\begin{theorem}\label{AndersonCrit}
Let $\bS, \T$ be metric spaces endowed with their Borel $\sigma$-algebras. Let $(\bS_{n})_{n \geq 0}$ be a sequence of measurable subsets of $\bS$ and for all $n \geq 0$, $g_{n} : \bS_{n} \lra \T$ a measurable map. Let $(V_{n})_{n \geq 1}$ be a sequence of rv's valued in $(\bS_{n})_{n \geq 1}$ which converges in distribution to a rv $V$ valued in $\bS$, with $\Pr\left( V \in \bS_{0} \right)=1$. Assume that for all $n \geq 1$, there exists a measurable  subset $\bS_{n}^{*}$ of $\bS_{n}$ such that 
\begin{equation}\label{VnSnStar}
\Prob\left( V_{n} \in \bS_{n}^{*} \right) \xrightarrow[n \rightarrow \infty]{} 1
\end{equation}

\noindent
and that, for any $v \in \bS_{0}$ and all sequence $(v_{n})_{n \geq 1}$ valued in $\left(\bS_{n}^{*} \right)_{n \geq 1}$, 
\begin{equation}\label{pseudoCont}
v_{n} \xrightarrow[n \rightarrow \infty]{} v ~\implies~ g_{n}(v_{n}) \xrightarrow[n \rightarrow \infty]{} g_{0}(v). 
\end{equation}

\noindent
Then, $g_{n}(V_{n})$ converges in distribution to $g_{0}(V)$. 
\end{theorem}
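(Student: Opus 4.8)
The statement is an "extended continuous mapping theorem" in the style of van der Vaart §18.11: we want to push a weak convergence $V_n \rightsquigarrow V$ through a sequence of maps $g_n$ that need not be continuous but are "asymptotically continuous along the relevant sets." The plan is to reduce everything to the classical extended continuous mapping theorem (van der Vaart, Theorem 18.11), whose only extra ingredient here is the auxiliary sets $\bS_n^*$ introduced by Condition~$(\ref{VnSnStar})$. So first I would recall that classical statement: if $V_n \rightsquigarrow V$ in $\bS$, if $g_n : \bS_n \to \T$ and $g_0 : \bS_0 \to \T$ are measurable with $\Pr(V \in \bS_0) = 1$, and if $v_n \to v$ with $v \in \bS_0$, $v_n \in \bS_n$ implies $g_n(v_n) \to g_0(v)$, then $g_n(V_n) \rightsquigarrow g_0(V)$. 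Our task is to check that the hypothesis~$(\ref{VnSnStar})$ lets us replace the domains $\bS_n$ by the smaller sets $\bS_n^*$ in the convergence requirement~$(\ref{pseudoCont})$ without losing the conclusion.

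The key step is a "good-set modification" argument. Since $\Pr(V_n \in \bS_n^*) \to 1$, I would define new random variables $\widetilde{V}_n$ that agree with $V_n$ on the event $\{V_n \in \bS_n^*\}$ and are set to some fixed point of $\bS_n^*$ (or, more cleanly, handled via a coupling) off that event; then $\widetilde{V}_n$ and $V_n$ have the same limit in distribution because $\Pr(\widetilde{V}_n \neq V_n) \to 0$ (Slutsky/the portmanteau argument: $d_{\mathrm{Prok}}(\widetilde{V}_n, V_n) \to 0$, or directly $|\E f(\widetilde V_n) - \E f(V_n)| \le 2\|f\|_\infty \Pr(\widetilde V_n \ne V_n) \to 0$ for bounded continuous $f$). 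Hence $\widetilde{V}_n \rightsquigarrow V$. Now $\widetilde{V}_n$ is valued in $\bS_n^*$ for every $n$, so Condition~$(\ref{pseudoCont})$ is exactly the hypothesis needed to apply the classical extended continuous mapping theorem to the sequence $(\widetilde V_n)$ with domains $\bS_n^*$: conclude $g_n(\widetilde{V}_n) \rightsquigarrow g_0(V)$. Finally, transfer back to $V_n$: on the event $\{V_n \in \bS_n^*\}$ we have $g_n(V_n) = g_n(\widetilde{V}_n)$, so again $\Pr\big(g_n(V_n) \ne g_n(\widetilde V_n)\big) \le \Pr(V_n \notin \bS_n^*) \to 0$, whence $g_n(V_n)$ and $g_n(\widetilde V_n)$ have the same weak limit, namely $g_0(V)$.

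Alternatively, and perhaps more transparently, I would reprove it from scratch via the portmanteau theorem. Take $f : \T \to \R$ bounded and Lipschitz. It suffices to show $\E f(g_n(V_n)) \to \E f(g_0(V))$. Split $\E f(g_n(V_n)) = \E[f(g_n(V_n))\ind_{\{V_n \in \bS_n^*\}}] + \E[f(g_n(V_n))\ind_{\{V_n \notin \bS_n^*\}}]$; the second term is $O(\Pr(V_n \notin \bS_n^*)) \to 0$ by~$(\ref{VnSnStar})$. For the first term, represent the pair $(V_n, \ind_{\{V_n \in \bS_n^*\}})$ and invoke Skorokhod's representation theorem on a common probability space so that $V_n \to V$ almost surely with $V \in \bS_0$ a.s.; along any realization where $V_n \to V$ and eventually $V_n \in \bS_n^*$, Condition~$(\ref{pseudoCont})$ gives $g_n(V_n) \to g_0(V)$, hence $f(g_n(V_n)) \to f(g_0(V))$, and bounded convergence finishes it, giving $\E f(g_0(V))$.

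The main obstacle is bookkeeping rather than depth: one must be careful that Skorokhod representation applies (metric spaces with Borel $\sigma$-algebras — which is exactly the hypothesis; if $\bS$ is not separable one should note that $V$ has separable support, or restrict attention to the separable subspace carrying $V$), and that the indicator sequence $\ind_{\{V_n \in \bS_n^*\}}$ can be carried along in the representation so that "eventually $V_n \in \bS_n^*$" holds almost surely in the limit — this uses $\Pr(V_n \in \bS_n^*) \to 1$ together with a subsequence argument (pass to a subsequence along which $\Pr(V_n \notin \bS_n^*)$ is summable, apply Borel–Cantelli, and then upgrade from "every subsequence has a further subsequence converging to $\E f(g_0(V))$" back to convergence of the full sequence). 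Measurability of the maps $v \mapsto g_n(v)$ and of the events involved is guaranteed by the hypotheses, so no extra care is needed there. Everything else is routine.
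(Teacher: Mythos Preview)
Your two approaches are both correct, but neither is the one the paper takes. The paper argues directly via the closed-set form of the Portmanteau theorem: for a closed $T \subset \T$ it bounds $\varlimsup \Pr\bigl(V_n \in g_n^{-1}(T)\bigr)$ by first splitting off the event $\{V_n \notin \bS_n^*\}$ (which vanishes by $(\ref{VnSnStar})$), then introducing the decreasing sets $R_m := \bigcup_{n \geq m} \bigl(g_n^{-1}(T) \cap \bS_n^*\bigr)$ and their intersection of closures $R^* := \bigcap_m \overline{R_m}$. Weak convergence of $V_n$ gives $\varlimsup \Pr(V_n \in g_n^{-1}(T) \cap \bS_n^*) \leq \Pr(V \in R^*)$, and the sequential hypothesis $(\ref{pseudoCont})$ is used exactly once to show $R^* \cap \bS_0 \subset g_0^{-1}(T)$, after which $\Pr(V \in \bS_0) = 1$ finishes the bound.

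Compared to this, your first route (redefine $V_n$ on the bad set and invoke the classical extended continuous mapping theorem as a black box) is the cleanest reduction and makes the role of $(\ref{VnSnStar})$ most transparent; its only wrinkle is choosing a point of $\bS_n^*$, harmless for large $n$. Your second route (Skorokhod plus Borel--Cantelli on a subsequence) is the most intuitive but costs a separability assumption on $\bS$ or on the support of $V$, which the paper's limsup-of-sets argument does not need. The paper's proof is thus the most self-contained of the three and works in the stated generality without invoking either Skorokhod or the classical theorem as a prerequisite, at the price of the $R_m$ bookkeeping.
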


\begin{proof}
We prove our extended version in subsection $\ref{proofAndersonCrit}$ of the Appendix. 
\end{proof}

\section{Preliminaries of geometry}\label{sec3}

In subsection $\ref{manifoldsLinearSub}$, we introduce the manifolds of li\c near subspaces in which the (collection of) PS's and their estimators lie. In subsection $\ref{subGeoParam}$, we define a geometric parametrization of $E_{n}$ that is an alternative to that by its coordinates in the standard basis of $\R^{d \times d}$, used in Anderson's paper \cite{Anderson 1963} to describe (Theorem $\ref{MainAnderson}$) the limiting distribution of $E_{n}$. In contrast, our parametrization of $E_{n}$ provides notably confidence regions for the collection of PS's, for which geometric preliminaries are presented in subsection $\ref{discrepancyFlags}$.

\subsection{Grassmannians, Stiefel and flag manifolds}\label{manifoldsLinearSub}

\subsubsection{Sets of linear subspaces}

Let $1 \leq q \leq d$. The Stiefel manifold is the set $\St_{q}$ of all $q$-frames of $\R^{d}$ i.e. of all $q$-tuples of orthonormal vectors of $\R^{d}$. Thus, 
\begin{equation*}
\St_{q} = \left\{ \cU \in \R^{d \times q} : \cU' \cU=I_{q} \right\}. 
\end{equation*}

\noindent
The Grassmannian $G_{q}$ is the set of all $q$-linear subspaces of $\R^{d}$, identified with the orthogonal projectors of rank $q$ i.e. such a projector $P$ is identified with its range 
$\rg(P)$: 
\begin{equation}\label{identifyGrass}
G_{q} = \left\{ P \in \mathbb{R}^{d \times d} : P'=P, ~P^{2}=P, ~\textrm{rank}(P) = q \right\}. 
\end{equation}

\noindent
Thus, $G_{q}$ is a subset of $\Sym$. Then, $\St_{q}$ and $G_{q}$ are linked by the map $\pi_{q}$ which associates to $\cU \in \St_{q}$, the (projector onto the) linear subspace spanned by $\cU$, defined by 
\begin{equation*}
\pi_{q} : \St_{q} \lra G_{q}, \quad \pi_{q}(\cU) = \cU \cU'. 
\end{equation*}

\begin{remark}\label{fiberPiQ}
Let $P \in G_{q}$ and $\cU_{P}$ a fixed $q$-frame spanning $\rg(P)$, i.e. $\cU_{P} \in \pi_{q}^{-1}(P)$. Then, any $\cV \in \pi_{q}^{-1}(P)$ is obtained from $\cU_{P}$ by a unique  orthonormal base change in $\rg(P)$. Thus, there exists a unique $K \in O(q)$ such that $\cV = \cU_{P}K$ i.e. $K=\cU_{P}' \cV$. 
\end{remark}

\begin{definition}
$(i)$ A \textit{flag} of $\R^{d}$ is a sequence of mutually orthogonal linear subspaces spanning $\R^{d}$, whose sequence of respective dimensions is called the \textit{type} of the flag. 

\noindent
$(ii)$ A flag whose type is $(1,1, ..., 1)$ is called a \textit{full flag}.

\noindent
$(iii)$ For $\Sigma \in \Sym$, its \textit{flag of eigenspaces} is the collection of its eigenspaces associated to eigenvalues in decreasing order, whose type is the sequence of their dimensions. 

\noindent
$(iv)$ Given a sequence $\I = (q_{i})_{1 \leq i \leq r}$ with $\sum q_{i}=d$, consider the product manifold 
\begin{equation}\label{productGrass}
\GI := \prod G^{i} \quad \textrm{where} \quad G^{i}:=G_{q_{i}}, \enskip 1 \leq i \leq r.
\end{equation}

\noindent
The \textit{flag manifold} of type $\I$ is the set $\FI$ of all flags of type $\I$, identified with a subset of $G^{\I}$: 
\begin{equation}\label{flagProj}
\FI = \left\{ \mathcal{P}=(P_{i})_{i} \in G^{\I} : ~\sum P_{i} = \R^{d}, ~P_{i}P_{j}=0, ~i \neq j  \right\}.
\end{equation}
\end{definition}

\begin{remark}
This definition of a flag is more suitable for computing with flags of eigenspaces than the usual equivalent one as a nested sequence of linear subspaces. 
\end{remark}

In the sequel, for $1 \leq i \leq r$, $\St_{q_{i}}$ and $\pi_{q_{i}}$ are respectively denoted by $\St^{i}$ and $\pi^{i}$.

\begin{definition}
$(i)$ Let $\St^{\I}_{\perp}$ be the set of all collections $\overline{\cU}=(\cU_{i})_{1 \leq i \leq r}$ of frames such that for $1 \leq i \leq r$, $\cU_{i} \in \St^{i}$ and the collection $\left( \pi^{i}(\cU_{i}) \right)_{i}$ lies in $\FI$. Define the map $(\pi^{i})_{i}$ by 
\begin{equation*}
(\pi^{i})_{i} : \St^{\I}_{\perp} \lra \FI
\enskip \textrm{with} \enskip
(\pi^{i})_{i}(\overline{\cU}) := \left( \pi^{i}(\cU_{i}) \right)_{i}. 
\end{equation*}

\noindent
$(ii)$ We say that $\Gamma \in O(d)$ \textit{spans} a flag $\cP=(P_{i})_{i} \in \FI$ when for all $1 \leq i \leq r$, $\pi^{i}\left(\Gamma^{(i)}\right) = P_{i}$.

\noindent
$(iii)$ For $Q \in O(d)$, the collection $\zeta(Q) := \left( Q^{(i)} \right)_{1 \leq i \leq r}$ lies in $\St^{\I}_{\perp}$, which defines a diffeomorphism $\zeta : O(d) \lra \St^{\I}_{\perp}$. 
\end{definition}

\subsubsection{Actions of the orthogonal group}\label{defGroupActions}

The group $O(d)$ acts on vectors of $\R^{d}$ by linear isometries, which induces an action on $\St_{q}$ by left-multiplication:
\begin{equation}\label{actionSt}
(Q, \cU) \mapsto Q\cU, \quad Q \in O(d), ~\cU \in \St_{q}.
\end{equation}

\noindent
Let $P \in G_{q}$ and $\cU_{P}$ a $q$-frame spanning $\rg(P)$. Any $Q \in O(d)$ sends $\cU_{P}$ to $Q\cU_{P}$, which spans $\rg(\pi_{q}(Q\cU_{P})) = \rg(QPQ')$. The latter is independent of $\cU_{P}$. So, $O(d)$ acts on $G_{q}$ by 
\begin{equation}\label{actionGr}
(Q, P) \mapsto Q \cdot P := QPQ', \quad Q \in O(d), ~P \in G_{q}.
\end{equation}

\noindent
The action on $G_{q}$ extends to the product $\GI$: for $Q \in O(d)$ and $\cP=(P_{i})_{1 \leq i \leq r} \in \GI$, set 
\begin{equation}\label{actionO(d)onGI}
Q*\cP := (Q \cdot P_{i})_{1 \leq i \leq r} \in \GI. 
\end{equation}

\noindent
Now, the action of $Q \in O(d)$ preserves the mutual orthogonality of linear subspaces. Thus, $\cP \in \FI \implies Q*\cP \in \FI$. So, the action defined in $(\ref{actionO(d)onGI})$ induces an action of $O(d)$ on $\FI$.

\begin{definition}
For $1 \leq i \leq r$, the standard $i$-th Grassmannian is $P_{0}^{i} \in G^{i}$ defined by
\begin{equation*}
P_{0}^{i}:=\pi^{i}\left( \left\{ \epsilon_{j} : j \in \beta_{i} \right\} \right)
\quad \textrm{i.e.} \quad
P_{0}^{i} = \mathrm{Diag}[0_{q_{1}}, ..., I_{q_{i}}, ..., 0_{q_{r}}].  
\end{equation*}

\noindent
The \textit{standard flag} of type $\I$ is the flag $\mathcal{P}_{0}^{\mathrm{I}}:=(P_{0}^{i})_{i}$. The \textit{main orbital map} is the map 
\begin{equation*}
\pii : O(d) \lra \FI, \quad \pii(Q) = Q*\PoI.  
\end{equation*}

\end{definition}

\begin{remark}\label{remarkPiS}
$(i)$ It is easily obtained from $(\ref{ithBlock})$ (see \cite{Rabenoro and Pennec 2024} for details) that 
\begin{equation}\label{linkPiSproof}
(\pi^{i})_{i} \circ \zeta = \pii \quad \textrm{i.e.} \quad \left( \pi^{i}\left(Q^{(i)}\right) \right)_{i} = \pii(Q) = \left( Q\Poi Q' \right)_{i}, \enskip Q \in O(d). 
\end{equation}

\noindent
$(ii)$ Let $Q \in O(d)$ which spans a flag $\cP \in \FI$. By definition, $\cP=\left( \pi^{i}\left(Q^{(i)}\right) \right)_{i}$. So, by $(\ref{linkPiSproof})$,  
\begin{equation}\label{remarkSpanFlag}
\cP = \pii(Q) := Q * \PoI. 
\end{equation}
\end{remark}

\subsubsection{Manifolds of linear subspaces}\label{manifoldStructure}

These actions of $O(d)$ provide the manifold structures of $\St_{q}$, $G_{q}$ and $\FI$, through Proposition $\ref{generalTheo}$ hereafter (see \cite[Proposition A.2]{Bendokat Zimmermann and Absil 2024}).

\begin{proposition}\label{generalTheo}
Consider a compact Lie group $\G$ which acts smoothly on a smooth manifold $\M$. For $b_{0} \in \M$, let $\B$ be its orbit and $\K$ its isotropy group. Let $\psi : \G \lra \G / \K$ be the canonical quotient map. Define the map $\pi_{0} : \G \lra  \B$ by $Q \lmp Q \cdot b_{0}$. Then, 

\vspace{.15cm}

\noindent
$(i)$ $\B$ is an embedded submanifold of $\M$.  

\vspace{.15cm}

\noindent
$(ii)$ The map $\widetilde{\pi}_{0} : \G / \K \lra \B$ such that $\widetilde{\pi}_{0} \circ \psi = \pi_{0}$ is a diffeomorphism. 
\end{proposition}

\noindent
$O(d)$ is a compact Lie group and its actions defined in subsection $\ref{defGroupActions}$ are smooth. In \cite{Bendokat Zimmermann and Absil 2024}, Proposition $\ref{generalTheo}$ is applied to prove that $\St_{q}$ (resp. $G_{q}$) is an embedded submanifold of $\R^{d \times q}$ (resp. $\Sym$) that is diffeomorphic to $O(d)/O(d-q)$ (resp. $O(d)/(O(q) \times O(d-q))$). 

The manifold structure of $\FI$ is provided by the following Lemma, proved in \cite{Rabenoro and Pennec 2024}.

\begin{lemma}\label{FiberPii}
The main orbital map $\pii : O(d) \lra \FI$ with $\pii(Q)=Q*\PoI $ is surjective.
\end{lemma}

\noindent
By Lemma $\ref{FiberPii}$, $\FI$ is the orbit of $\PoI$ under the smooth action of $O(d)$ on $\GI$ defined in $(\ref{actionO(d)onGI})$. Now, the isotropy group of $\PoI$ under this action is the group 
\begin{equation*}
O(\I) := \left\{ \Dg(H_{1}, ..., H_{r}) : H_{i} \in O(q_{i}), 1 \leq i \leq r \right\} \simeq \prod O(q_{i}). 
\end{equation*}

\noindent
By Proposition $\ref{generalTheo}$, $\FI$ is an embedded submanifold of $\GI$ and the map $\wpi^{\I} : O(d)/O(\I) \lra \FI$ s.t. $\pii = \wpi^{\I} \circ \psi^{\I}$ is a diffeomorphism, where $\psi^{\I} : O(d) \lra O(d)/O(\I)$ is the canonical map.

\subsubsection{Background of Riemannian geometry}

We introduce Riemannian structures on these manifolds, in order to measure intrinsic distances and perform estimation on them. \\

More generally, let $\M$ be a connected smooth manifold. A \textit{metric} on $\M$ is a collection $g=(g_{x})_{x \in \M}$ of inner products on the tangent spaces $T_{x}\M$, varying smoothly wrt $x$. Then, $g$ defines the \textit{geodesic distance} $\rho_{g}$ on $\M$, where for $x, y \in \M$, $\rho_{g}(x, y)$ is the infimum of lengths (wrt $g$) of all $C^{1}$ curves between $x$ and $y$. A \textit{geodesic} is a smooth curve on $\M$ that realizes locally this infimum. For $x \in \M$ and $v \in T_{x}\M$, there exists a unique geodesic $\gamma_{x,v}$ such that $\gamma_{x,v}(0)=x$ and $\dot{\gamma_{x,v}}(0)=v$. By the Hopf-Rinow theorem, the metric space $(\M,\rho_{g})$ is complete iff all geodesics are defined on $\R$, which we assume in the sequel. In that case, for all $x, y \in \M$, there exists at least one geodesic of minimal length between $x$ and $y$. For $x \in \M$, the \textit{exponential map} at $x$ is the map $\mathrm{Exp}_{x}^{\M} : T_{x}\M \lra \M$ such that $\Exp_{x}^{\M}(v) = \gamma_{x,v}(1)$. 

We recall that, for all $x \in \M$, any geodesic starting at $x$ is locally length-minimizing. The \textit{cut locus} of $x \in \M$ is the set $\Cut(x)$ of points $y$ such that the geodesics starting at $x$ cease to be length-minimizing beyond $y$. Thus, on a sphere, the geodesics are the great circles, so that two antipodal points are each other's cut locus. Let 
$x \in \M$ and $y \in \M \setminus \Cut(x)$. Then, there exists a \textit{unique} geodesic of minimal length between $x$ and $y$, whose initial velocity is thus the smallest tangent vector $v \in T_{x}\M$ such that $\Exp_{x}^{\M}\left(v\right) = y$. This vector is called the \textit{Riemannian Logarithm} of $y$ at $x$, denoted by $\mathrm{Log}_{x}^{\M}(y)$, related to the geodesic distance by:  
\begin{equation}\label{dgLog}
\rho_{g}(x,y) = \left\| \Log_{x}^{\M}(y) \right\|_{x}.
\end{equation}

\noindent
For any $x \in \M$, there exists an open set $\ID_{x} \subset T_{x}\M$, called the \textit{injectivity domain} at $x$, such that the restriction of $\Exp_{x}^{\M}$ to $\ID_{x}$ is a  diffeomorphism onto $\M \setminus \Cut(x)$, whose inverse is a restriction of the map $\Log_{x}^{\M}(\cdot)$. Finally, we present results on \textit{Riemannian submersions}.

\begin{definition}
Let $\left(\E, g^{\E} \right)$ and $\left(\B, g^{\B}\right)$ be Riemannian manifolds. Let $\pi : \E \lra \B$ be a smooth submersion, i.e. for all $x \in \E$, $d_{x}\pi$ is a surjective linear map. 

\vspace{.15cm}

\noindent
$(i)$ For $x \in \E$, the \textit{vertical space} at $x$ is $V_{x}\E := \ker (d_{x}\pi) \subset T_{x}\E$. The \textit{horizontal space} at $x$ is its orthogonal complement in $T_{x}\E$ wrt the inner product $g_{x}^{\E}$, denoted by $H_{x}\E := \left( V_{x}\E \right)^{\perp}$. 

\vspace{.15cm}

\noindent
$(ii)$ Let $y \in \B$ and $v \in T_{y}\B$. By $(i)$, for all $x \in \E$, there exists a unique vector $v^{\sharp}_{x} \in H_{x}$ such that $(d_{x}\pi)(v^{\sharp}_{x}) = v$. The vector  $v^{\sharp}_{x}$ is called the \textit{horizontal lift} wrt $\pi$ of $v$ at $x$.

\vspace{.15cm}

\noindent
$(iii)$ The map $\pi$ is a \textit{Riemannian submersion} when for all $x \in \E$, the restriction of $d_{x}\pi$ to $H_{x}\E$ is a linear isometry between $\left(H_{x}\E, g^{\E}_{x} \right)$ and $\left(T_{\pi(x)}\B, g^{\B}_{\pi(x)} \right)$. 

\end{definition}

\begin{proposition}\label{PropHorLift}
Let $\pi : \E \lra \B$ be a Riemannian submersion. 

\vspace{.15cm}

\noindent
$(i)$ Let $\gamma$ be a geodesic in $\B$. Then, for all $x \in \E$, there exists a unique geodesic $\hgm^{x}$ in $\E$ through $x$ which is horizontal, i.e. all its tangent vectors are horizontal, and which projects to $\gamma$, i.e. $\pi(\hgm^{x})=\gamma$. The geodesic $\hgm^{x}$ is called the horizontal lift wrt $\pi$ of $\gamma$ through $x$. 

\vspace{.15cm}

\noindent
$(ii)$ The geodesics in $\B$ are the images by $\pi$ of the horizontal geodesics in $\E$.
\end{proposition}

\subsubsection{Metrics on Stiefel manifolds and Grassmannians}

We recall hereafter the description of the tangent spaces of $\St_{q}$ and $G_{q}$: See \cite{Bendokat Zimmermann and Absil 2024}. For any $\cU \in \St_{q}$ and $P \in G_{q}$, 
\begin{equation}\label{tangentSpaceGrass}
T_{\cU}\St_{q} = \left\{ \cD \in \R^{d \times q} : \cU' \cD = - \cD'\cU \right\}
\enskip \textrm{and} \enskip
T_{P}G_{q} = \left\{ \Delta \in \mathrm{Sym}_{d} : \Delta P + P \Delta = \Delta \right\}. 
\end{equation}

\noindent
In the sequel, we endow $\St_{q}$ with the Euclidean metric $g^{\St}$ defined by
\begin{equation*}
g^{\St}_{\cU}\left( \cD_{1}, \cD_{2} \right)=\tr \left( \cD_{1}^{T} \cD_{2} \right), \quad \cU \in \St_{q} \textrm{~and~} \cD_{1}, \cD_{2} \in T_{\cU}\St_{q}. 
\end{equation*} 

\noindent
Then, $G_{q}$ is endowed with the metric $g^{G}$ (see \cite{Bendokat Zimmermann and Absil 2024} for a discussion on it) defined by
\begin{equation*}
g_{P}^{G}(\Delta_{1}, \Delta_{2}) = \frac{1}{2} \tr \left( \Delta_{1} \Delta_{2} \right), \quad 
P \in G_{q} \textrm{ and } \Delta_{1},  \Delta_{2} \in T_{P}G_{q}. 
\end{equation*}

\noindent
The Riemannian manifold $(G_{q}, g^{G})$ has a rich structure, notably of \textit{symmetric space}. So, many Riemannian operations in $G_{q}$ are available in closed form: see paragraph $\ref{explicitFormulas}$. Moreover, the following result holds: see \cite{Rabenoro and Pennec 2024}.

\begin{lemma}\label{piSG}
The map $\pi_{q}$ is a Riemannian submersion from $\left( \St_{q}, g^{\St} \right)$ onto $\left( G_{q}, g^{G} \right)$. 
\end{lemma}

\subsection{Geodesic parametrization of a full flag}\label{subGeoParam}

With the notations of Section $\ref{sec2}$, let $F^{\I}(\Sigma)$ be the flag of eigenspaces of $\Sigma$ ($\I$ denotes its type) and $\hF_{n}$ that of $\ECM$. Thus, $F^{\I}(\Sigma)$ is the flag of PS's and $\hF_{n}$ is a.s. a full flag. As in Anderson's paper \cite{Anderson 1963}, $\hF_{n}$ is represented by the matrix $C_{n} \in O(d)$ of eigenvectors of $\ECM$. When $F^{\I}(\Sigma)$ is estimated by $\hF_{n}$, the main difficulty to measure the deviation between them is that their types are different in general. 

To solve this problem, we define a parametrization of (almost) any full flag, represented by an orthogonal matrix, which determines its relative position wrt the pair formed by a given flag and an orthogonal matrix that spans the latter. Then, we prove that such a parametrization of $C_{n}$ wrt $\left( F^{\I}(\Sigma), \Gamma \right)$ is equivalent to that of $E_{n}:=\Gamma' C_{n}$ wrt $\left( \PoI, I_{d} \right)$.

\subsubsection{Geodesic projection of a frame}\label{geoProj}

Fix $R \in G_{q}$ and set 
\begin{equation*}
\V_{R} := \pi_{q}^{-1}\left( G_{q} \setminus \Cut(R) \right) = \left\{ \cU \in \St(q,d) : \pi_{q}(\cU) \notin \Cut(R) \right\}. 
\end{equation*}

\noindent
For $\cU \in \V_{R}$, set $P:=\pi_{q}(\cU)$. Let $\gamma_{(P,R)}$ be the minimal geodesic between $P$ and $R$ in $G_{q}$ and $\hgm_{(P,R)}^{\cU}$ its horizontal lift wrt 
$\pi_{q}$ through $\cU$. Then, define the $q$-frame $\cH^{q}_{(P,R)}(\cU)$ as the \textit{endpoint} of $\hgm_{(P,R)}^{\cU}$. By $(ii)$ of Proposition $\ref{PropHorLift}$, $\pi_{q} \left( \cH^{q}_{(P,R)}(\cU) \right) = R$. 

\[\xymatrix{
   \cU \ar@{.}[rr]_{\hgm_{(P,R)}^{\cU}} \ar[d]_{\pi_{q}} &&  
   \cH^{q}_{(P,R)}(\cU)  \ar[d]^{\pi_{q}} \\
    P \ar@{.}[rr]_{\gamma_{(P,R)}} && R 
  }
\]

\begin{remark}\label{lemarkMinGD}
Among all $q$-frames generating $\rg(R)$, $\cH^{q}_{(P, R)}(\cU)$ minimizes the geodesic distance in $\St_{q}$ to $\cU$, denoted by $d_{g}^{\St}$: See Lemma 26.11 in \cite{Michor 2008}. Formally, 
\begin{equation}\label{minGeoDist}
d_{g}^{\St} \left( \cU, \cH^{q}_{(P, R)}(\cU) \right) = \min \left\{ d_{g}^{\St} \left( \cU, \cW \right) : \pi^{SG}(\cW) = R \right\}.  
\end{equation}

\noindent
So, $\cH^{q}_{(P, R)}(\cU)$ is interpreted as the orthogonal projection of $\cU$ on $\rg(R)$ wrt the metric $g^{\St}$.
\end{remark}

\begin{definition}\label{defGeoProj}
The $q$-frame $\cH^{q}_{(P,R)}(\cU)$ is called the \textit{geodesic projection} of $\cU$ onto $\rg(R)$. For $v \in T_{P}G_{q}$, its horizontal lift wrt $\pi_{q}$ at $\cU$ is denoted by $v^{\sharp}_{\cU}$. By definition, 
\begin{equation}\label{formalGeoProj}
\cH^{q}_{(P,R)}(\cU) = \Exp^{\St_{q}}_{\cU} \left( \left( \Log^{G_{q}}_{P} \left( R \right) \right)^{\sharp}_{\cU} \right). 
\end{equation}
\end{definition}

\subsubsection{Geodesic decomposition of a frame}\label{deviationGeoDec}

Fix $R \in G_{q}$ and $\cW_{R} \in \pi_{q}^{-1}(R)$.

\begin{lemma}\label{lemmaGeoDec}
$(i)$ Define the map $\theta_{R} : \V_{R} \lra \pi_{q}\left( \V_{R} \right) \times \pi_{q}^{-1}(R)$ by 
\begin{equation}\label{frameGeoDec}
\theta_{R}(\cU) = \left( \pi_{q}(\cU), \cH^{q}_{(\pi_{q}(\cU), R)}(\cU) \right). 
\end{equation}

\noindent
Then, the map $\theta_{R}$ is a diffeomorphism, whose inverse is defined by 
\begin{equation*}
\theta_{R}^{-1} : \pi_{q}\left( \V_{R} \right) \times \pi_{q}^{-1}(R) \lra \V_{R} 
\quad \textrm{with} \quad 
(P, \cV) \lmp \cH^{q}_{(R, P)}(\cV). 
\end{equation*}

\noindent
$(ii)$ Let $\ID_{R}$ be the injectivity domain at $R$. Define the map $\xi_{R}^{\cW_{R}} : \V_{R} \lra \ID_{R} \times O(q)$ by 
\begin{equation}\label{effectiveGeoDec}
\xi_{R}^{\cW_{R}}(\cU) = \left( \Log_{R}^{G_{q}}\left(\pi_{q}(\cU) \right), (\cW_{R})' \cH_{(\pi_{q}(\cU), R)}(\cU) \right). 
\end{equation}

\noindent
Then, the map $\xi_{R}^{\cW_{R}}$ is a diffeomorphism. 
\end{lemma}

\begin{proof}
One checks that the map $\theta_{R}^{-1}$ is indeed the inverse of $\theta_{R}$: see the diagram in Definition $\ref{defGeoProj}$. Their smoothness follows from properties of Riemannian submersions. This proves $(i)$. Then, $(ii)$ follows readily from $(i)$ and Remark $\ref{fiberPiQ}$. 
\end{proof}

\begin{definition}
For $\cU \in \V_{R}$, the pair $\theta_{R}\left( \cU \right)$ is called the \textit{geodesic decomposition} of $\cU$ wrt $R$ and $\xi_{R}^{\cW_{R}}\left( \cU \right)$ its \textit{effective geodesic decomposition} wrt $(R, \cW_{R})$. By Lemma $\ref{lemmaGeoDec}$, any $\cU \in \V_{R}$ is uniquely determined by these decompositions.
\end{definition}

\begin{remark}
The effective geodesic decomposition lies in a product of a vector space and a group. Thus, it provides a handleable measure of the relative position of any $q$-frame $\cU \in \V_{R}$ wrt a given $q$-linear subspace $R$ and a $q$-frame spanning $\rg(R)$.  
\end{remark}

\subsubsection{Explicit formulas}\label{explicitFormulas}

The explicit formulas in $G_{q}$ provide those for the (effective) geodesic decomposition of a frame. First, we describe the cut locus in $G_{q}$: see \cite{Bendokat Zimmermann and Absil 2024}.

\begin{lemma}\label{CutGrass}
$(i)$ Let $P \in G_{q}$. Then, writing $P=YY'$, 
\begin{equation*}
\mathrm{Cut}(P) = \left\{ R=ZZ' \in G_{q} : \mathrm{rank}\left(Y'Z \right) < q \right\}.
\end{equation*}

\noindent
$(ii)$ Thus, the following symmetry holds: $R \in \Cut(P) \iff P \in \Cut(R)$. 

\noindent
$(iii)$ The cut locus is compatible with the action of $O(d)$ on $G_{q}$, i.e. for all $Q \in O(d)$, 
\begin{equation*}
R \in \Cut(P) \iff Q \cdot R \in \mathrm{Cut}(Q \cdot P). 
\end{equation*}
\end{lemma}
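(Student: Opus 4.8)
\textbf{Proof plan for Lemma \ref{CutGrass}.}

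The plan is to exploit the fact that $\pi^{OG} : O(d) \to G(q,d)$ is a Riemannian submersion, so that the cut locus in $G(q,d)$ can be read off from the geodesics written in \eqref{geodesicGrass}, combined with the well-known relation between cut loci on a base and on the total space of a Riemannian submersion from a compact group. Concretely, I would first reduce to the base point $P_0$: since the $O(d)$-action is by isometries (Lemma \ref{isomGroupAct}) and carries $P_0$ to any prescribed $P$, the final displayed equivalence $R \in \Cut(P) \iff QRQ' \in \Cut(QPQ')$ is immediate once the statement is established at $P_0$, and it also reduces the description of $\Cut(P)$ to that of $\Cut(P_0)$ by conjugation. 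So the crux is to show
\begin{equation*}
\Cut(P_0) = \left\{ R = ZZ' \in G(q,d) : \rk\left( Y_0' Z \right) < q \right\},
\end{equation*}
where $Y_0$ is the $d \times q$ matrix of the first $q$ standard basis vectors.

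Next I would make the geodesics explicit. By \eqref{geodesicGrass}, a unit-speed geodesic from $P_0$ is $t \mapsto \exp_m(t\widetilde\Omega) P_0 \exp_m(-t\widetilde\Omega)$ with $\widetilde\Omega$ a horizontal skew-symmetric matrix at $I_d$; the horizontal space at $I_d$ for $\pi^{OG}$ consists of the block-off-diagonal skew-symmetric matrices $\widetilde\Omega = \begin{pmatrix} 0 & -B' \\ B & 0 \end{pmatrix}$ with $B \in \R^{(d-q)\times q}$. Using the SVD of $B$ one diagonalizes the flow into a product of two-dimensional rotations, obtaining the classical formula for Grassmann geodesics in terms of principal angles: the range of the geodesic at time $t$ is $\mathrm{span}\bigl( Y_0 \cos(t\theta) + U_\perp \sin(t\theta) \bigr)$ where $\theta = \mathrm{diag}(\theta_1,\dots,\theta_q)$ are the singular values of $B$ and the columns of $U_\perp$ are the corresponding left singular directions in the orthogonal complement. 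From this one reads that the geodesic returns to be ``tangent to $P_0$'' — equivalently, two distinct geodesics from $P_0$ first meet, or a Jacobi field vanishes — exactly when the smallest $t\theta_k$ reaches $\pi/2$ (conjugate point, when some principal angle hits $\pi/2$) or when two geodesics through points with equal minimal distance coalesce. The standard cut-locus computation on $G(q,d)$ then gives: $R$ is in the cut locus of $P_0$ iff the largest principal angle between $\rg(P_0)$ and $\rg(R)$ equals $\pi/2$, i.e. iff $\rg(R)$ contains a nonzero vector orthogonal to $\rg(P_0)$'s... more precisely iff the smallest principal angle cosine... — here one must be careful: the condition ``largest principal angle $= \pi/2$'' is equivalent to $\cos\theta_{\max} = 0$, i.e. to the smallest singular value of $Y_0' Z$ being $0$, i.e. to $\rk(Y_0'Z) < q$. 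Writing $P = YY'$ for general $P$ and transporting by an isometry $Q$ with $QY_0 = Y$ (possible after adjusting columns), $Y_0'Z$ becomes $Y'(Q'Z)$ and rank is preserved, yielding the stated formula. The symmetry $R \in \Cut(P) \iff P \in \Cut(R)$ follows since $\rk(Y'Z) = \rk((Z'Y)') = \rk(Z'Y)$ is symmetric in $Y$ and $Z$.

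The main obstacle is the geodesic/cut-locus computation itself: one must justify carefully that the cut time along a geodesic with principal-angle data $\theta$ is $\min_k \pi/(2\theta_k)$ — that is, that beyond the first conjugate point (or the first ``crossing'' of minimizing geodesics) the geodesic ceases to minimize, and that this first conjugate point indeed occurs when a principal angle reaches $\pi/2$. This requires either citing the known structure of the cut locus of the Grassmannian as a symmetric space, or doing the Jacobi field analysis using the explicit curvature of $G(q,d)$ with the metric $g^G$; either way one also needs the matching statement that no minimizing geodesic crosses earlier, which for the Grassmannian follows from its being a symmetric space of rank $\min(q,d-q)$ where cut point and first conjugate point coincide along generic geodesics but the cut locus is governed by the largest root. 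I would handle this by invoking the symmetric-space description (citing \cite{Kato 1995} is not appropriate here; rather a Riemannian geometry reference) and then merely verifying that ``largest principal angle $=\pi/2$'' translates to the rank condition, which is the elementary linear-algebra step: $\rk(Y'Z) = q$ iff all principal angles between $\rg(Y)$ and $\rg(Z)$ are strictly less than $\pi/2$. The remaining claims — the two symmetries — are then short formal consequences as indicated above.
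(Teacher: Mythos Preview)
The paper states Lemma \ref{CutGrass} without proof, treating the description of $\Cut(P)$ as a known fact about Grassmannians; the characterization via principal angles is standard and appears, for instance, in the handbook \cite{Bendokat Zimmermann and Absil 2020} cited elsewhere in the paper. Your plan is the correct and canonical route to this result: reduce to $P_0$ using the isometric $O(d)$-action (Lemma \ref{isomGroupAct}), parametrize geodesics by the horizontal block $B$ and diagonalize via its SVD to obtain the principal-angle description, identify the cut locus of $P_0$ as the set of $R$ whose largest principal angle with $\rg(P_0)$ equals $\pi/2$, and then translate this into the rank condition $\rk(Y_0'Z)<q$ via the singular values of $Y_0'Z$. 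The symmetry $\rk(Y'Z)=\rk(Z'Y)$ and the $O(d)$-equivariance then follow formally exactly as you say.

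The one genuine gap you correctly flag is the determination of the cut time itself. Your parenthetical ``cut point and first conjugate point coincide along generic geodesics'' is slightly misleading: for compact symmetric spaces the cut locus is the image of the boundary of the Dirichlet domain in the Cartan subspace, and on $G(q,d)$ this boundary is reached precisely when $\max_k \theta_k = \pi/2$, regardless of whether that direction is also conjugate. The clean justification is therefore the symmetric-space one (the cut time along a maximal flat is governed by the first wall of the Weyl chamber), not a Jacobi-field computation along individual geodesics. Once you cite or reproduce that fact, the remainder of your argument is complete and matches what the literature does; you are supplying more than the paper itself, which simply asserts the lemma.
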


\noindent
Now, a closed form for the Riemannian Logarithm in $G_{q}$ is presented hereafter: see 
\cite{Batzies Huper Machado and Silva Leite 2015}.

\begin{theorem}\label{theoLogGrass}
Let $P \in G_{q}$ and $R \in G_{q} \setminus \Cut(P)$. Then, 
\begin{equation}\label{explicitLogGrass}
\mathrm{Log}_{P}^{G_{q}}(R) = [\Omega, P] \quad \textrm{where} \quad
\Omega = \frac{1}{2} \log_{m} \left( (I_{d}-2R)(I_{d}-2P) \right),
\end{equation}

\noindent
and $\log_{m}$ denotes the matrix logarithm. 
\end{theorem}

\begin{corollary}\label{O(d)andLogGrass}
Let $P, R \in G_{q}$ and $Q \in O(d)$. If $R \notin \Cut(P)$, then
\begin{equation*}
\Log_{Q \cdot P}^{G_{q}}\left( Q \cdot R \right) = Q \left( \Log_{P}^{G_{q}}\left( R \right) \right) Q'.
\end{equation*}
\end{corollary}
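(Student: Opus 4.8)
The plan is to derive the identity directly from the closed form \eqref{explicitLogGrass}, the only nontrivial ingredient being the equivariance of the matrix logarithm under conjugation together with the compatibility of the cut locus with the $O(d)$-action recorded in Lemma \ref{CutGrass}.

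First I would note that the left-hand side makes sense: since $R \notin \Cut(P)$, Lemma \ref{CutGrass} gives $QRQ' \notin \Cut(QPQ')$, so $\Log_{QPQ'}^{G(q,d)}(QRQ')$ exists and is given by \eqref{explicitLogGrass} applied to the pair $(QPQ', QRQ')$. Write $\Omega$ for the skew-symmetric matrix attached to $(P,R)$ as in \eqref{explicitLogGrass} and $\Omega_{Q}$ for the one attached to $(QPQ', QRQ')$. Using $QQ' = I_{d}$ to merge the two conjugations,
\[
\Omega_{Q} = \tfrac{1}{2}\log_m\big( (I_{d}-2QRQ')(I_{d}-2QPQ') \big) = \tfrac{1}{2}\log_m\big( Q(I_{d}-2R)(I_{d}-2P)Q' \big).
\]
Now $\log_m(QMQ') = Q\log_m(M)Q'$ for $Q \in O(d)$ and any $M$ in the domain of $\log_m$: this is immediate from the power-series definition (or from functional calculus), since conjugation by $Q$ is an algebra automorphism preserving the spectrum — in particular it preserves the branch of the logarithm used for matrices of the form $(I_{d}-2R)(I_{d}-2P)$. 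Hence $\Omega_{Q} = Q\Omega Q'$. It then remains to push the conjugation through the commutator: since $Q'Q = I_{d}$,
\[
[\Omega_{Q}, QPQ'] = Q\Omega Q'\cdot QPQ' - QPQ'\cdot Q\Omega Q' = Q(\Omega P - P\Omega)Q' = Q[\Omega,P]Q',
\]
and by \eqref{explicitLogGrass} the left-hand side is $\Log_{QPQ'}^{G(q,d)}(QRQ')$ while the right-hand side equals $Q\big(\Log_{P}^{G(q,d)}(R)\big)Q'$, which is the claim.

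Alternatively, and more conceptually, one may avoid the closed form entirely: the map $\Phi_{Q}: R \mapsto QRQ'$ is an isometry of $(G(q,d), g^{G})$ by Lemma \ref{isomGroupAct}, it carries $\Cut(P)$ onto $\Cut(\Phi_{Q}(P))$ by Lemma \ref{CutGrass}, and its differential at $P$ is $\Delta \mapsto Q\Delta Q'$ because $\Phi_{Q}$ is the restriction to $G(q,d)$ of a linear map on $\Sym$. Isometries intertwine Riemannian exponentials, hence also, on the relevant domains, their inverses the Riemannian logarithms, giving $\Log_{\Phi_{Q}(P)}^{G(q,d)}(\Phi_{Q}(R)) = d\Phi_{Q}|_{P}\big(\Log_{P}^{G(q,d)}(R)\big) = Q\big(\Log_{P}^{G(q,d)}(R)\big)Q'$.

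I do not expect a genuine obstacle here: the statement is essentially a formal consequence of equivariance. The only points deserving an explicit word are the conjugation-equivariance of $\log_m$ in the first approach (resp. the standard fact that isometries commute with $\Exp$ in the second) and the well-definedness of the left-hand side, both of which are routine given Lemma \ref{CutGrass}.
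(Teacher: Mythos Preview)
Your first argument is correct and matches the paper's proof essentially line for line: the paper too applies the closed form \eqref{explicitLogGrass}, invokes the conjugation-equivariance of $\log_m$ to get $\Omega_Q = Q\Omega Q'$, and then pushes the conjugation through the commutator. Your additional remark on well-definedness via Lemma \ref{CutGrass} and the alternative isometry-based argument are not in the paper, but they are sound and add useful context.
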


\begin{proof}
By the properties of the matrix logarithm, 
\begin{align*}
\log_{m} \left( (I_{d}-2QRQ')(I_{d}-2QPQ') \right) &= \log_{m} \left(Q (I_{d}-2R)(I_{d}-2P) Q' \right) \\
&= Q\log_{m} \left( (I_{d}-2R)(I_{d}-2P) \right)Q'.
\end{align*}

\noindent
So, with the notations of $(\ref{explicitLogGrass})$, $\Log_{Q \cdot P}^{G_{q}}\left( Q \cdot R \right) =\left[ Q\Omega Q', QPQ' \right] = Q[\Omega, P]Q'$.
\end{proof}

\noindent
Then, we establish from $(\ref{formalGeoProj})$ an explicit expression of $\cH^{q}_{(P, R)}(\cU)$, proved in \cite{Rabenoro and Pennec 2024}. 

\begin{proposition}\label{holSG}
For $\cU \in \V_{R}$, set $P:=\pi_{q}(\cU)$ and $\Delta := \Log_{P}^{G_{q}} \left(R\right)$. Then, setting also 
$\cV := \begin{pmatrix}
\cU & \Delta \cU 
\end{pmatrix} \in \R^{n \times 2q}$ 
and $C:=\cU' \Delta^{2}\cU$,
\begin{equation}\label{equHolSG}
\cH^{q}_{(P, R)}(\cU) = \cV \left( \exp_{m} \begin{pmatrix}
0 & -C \\
I_{q} & 0
\end{pmatrix}
\right) \begin{pmatrix}
I_{q} \\ 0_{q} 
\end{pmatrix}.
\end{equation}

\end{proposition}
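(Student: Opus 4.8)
The plan is to verify directly that the curve $t \mapsto V\bigl(\exp_m\!\begin{pmatrix} 0 & -tC \\ tI_q & 0 \end{pmatrix}\bigr)\begin{pmatrix} I_q \\ 0_q \end{pmatrix}$ is precisely the horizontal lift $\hgm_{(P,R)}^U$ of the minimal geodesic $\gamma_{(P,R)}$ through $U$, and then read off its endpoint at $t=1$. By the characterization recalled just above the statement, $\hgm_{(P,R)}^U$ is the unique geodesic in $\St(q,d)$ starting at $U$ that projects under $\pi^{SG}_q$ onto $\gamma_{(P,R)}$ and is everywhere horizontal; so it suffices to check these three properties for the proposed curve and invoke uniqueness.

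First I would recall the explicit form of geodesics and of the horizontal distribution in $\St(q,d)$ for the Euclidean metric $g^{\St}$: the horizontal space at $U$ consists of matrices $D$ with $U^TD$ skew-symmetric, and the Stiefel geodesic through $U$ with horizontal initial velocity $D = \Delta U$ (where $U^T\Delta U$ is skew because $\Delta \in T_PG(q,d)$ is symmetric and $\Delta P + P\Delta = \Delta$ forces $U^T\Delta U = U^T\Delta U - U^T\Delta PU\cdots$, i.e. one computes $U^T\Delta U$ is skew) is exactly given by the block-exponential formula with $C = U^T\Delta^2U = (\Delta U)^T(\Delta U)$ and $\begin{pmatrix} U & \Delta U\end{pmatrix}$ as the frame. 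This is the standard Edelman–Arias–Smith geodesic formula, which I would either cite or re-derive by checking $\ddot\gamma + \gamma(\dot\gamma^T\dot\gamma) = 0$. Next, projecting this curve by $\pi^{SG}_q$, i.e. forming $\gamma(t)\gamma(t)^T$, I would show it equals $\Exp_P^{G(q,d)}(t\Delta)$ using Lemma \ref{ExpGrass} together with the identity $[\Delta,P] = \Delta$ applied to $U$ (since $PU = U$), so the projected curve is the geodesic in $G(q,d)$ from $P$ in direction $\Delta = \Log_P^{G_q}(R)$, which is $\gamma_{(P,R)}$ by definition of the Riemannian logarithm. Finally, horizontality along the whole curve is automatic once the initial velocity is horizontal, because horizontal lifts of geodesics are geodesics and parallel transport of the horizontal distribution is preserved along them — alternatively one checks $\gamma(t)^T\dot\gamma(t)$ stays skew-symmetric for all $t$, which follows from the block structure of the exponential.

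I expect the main obstacle to be the bookkeeping that identifies the block-exponential curve as a \emph{horizontal} geodesic rather than merely a geodesic: one must confirm that the specific initial velocity $\Delta U$ is horizontal at $U$ (i.e. $U^T\Delta U \in \son[q]$), and that the frame $V = \begin{pmatrix} U & \Delta U\end{pmatrix}$ together with $C = U^T\Delta^2 U$ reproduces exactly the normalization in the Stiefel geodesic formula — in particular that $\Delta U$ already has the correct "thin" form (no component of $\Delta U$ lies in $\rg(U)$, which again uses $\Delta P + P\Delta = \Delta$ hence $U^T\Delta U$ skew, so that the symmetric part vanishes). Once that is pinned down, verifying $\pi^{SG}_q(\gamma(t)) = \gamma_{(P,R)}(t)$ via Lemma \ref{ExpGrass} and evaluating at $t=1$ to obtain $(\ref{equHolSG})$ is routine, and the endpoint is $\cH^q_{(P,R)}(U)$ by definition.
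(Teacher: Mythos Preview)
The paper does not actually prove this proposition; it simply refers to the companion paper \cite{Rabenoro and Pennec 2024} for the derivation. Your direct verification is therefore an independent argument, and the overall strategy is sound: show that the proposed curve is a horizontal geodesic in $(\St(q,d),g^{\St})$ through $U$ with initial velocity $\Delta U$, and that it therefore projects to $\gamma_{(P,R)}$ under $\pi^{SG}_q$.

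One point needs correction. For the Euclidean metric $g^{\St}$ used in this paper, the condition ``$U^{T}D$ skew'' is the \emph{tangent space} condition at $U$, not the horizontal one. The vertical space is $\{UA : A \in \mathfrak{so}(q)\}$, so orthogonality with respect to $g^{\St}$ forces $U^{T}D$ symmetric; combined with the tangency condition this gives the horizontal space $\{D : U^{T}D = 0\}$. Fortunately this does not break your argument: from $\Delta P + P\Delta = \Delta$ and $PU=U$ one obtains $2\,U^{T}\Delta U = U^{T}\Delta U$, hence $U^{T}\Delta U = 0$, so $\Delta U$ is horizontal in the correct sense. The remaining bookkeeping then goes through cleanly via the single identity $M^{T}(V^{T}V) + (V^{T}V)M = 0$, where $M = \begin{pmatrix} 0 & -C \\ I_{q} & 0 \end{pmatrix}$ and $V^{T}V = \begin{pmatrix} I_{q} & 0 \\ 0 & C \end{pmatrix}$: it yields $\gamma(t)^{T}\gamma(t) = I_{q}$, $\gamma(t)^{T}\dot\gamma(t) = 0$ (horizontality along the whole curve), $\dot\gamma(t)^{T}\dot\gamma(t) = C$, and hence the embedded geodesic equation $\ddot\gamma + \gamma(\dot\gamma^{T}\dot\gamma) = 0$.
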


\noindent
We deduce hereafter the effect on the geodesic decomposition of the action of $O(d)$.

\begin{corollary}\label{geoDecO(d)}
$(i)$ Let $R \in G_{q}$ and $\cU \in \V_{R}$. Then, for all $Q \in O(d)$, 
\begin{equation}\label{geoDecQLog}
Q\cU \in \V_{Q \cdot R}
\quad \textrm{and} \quad
\Log_{Q \cdot R}^{G_{q}}\left(\pi_{q}(Q\cU) \right) = Q \left( \Log_{R}^{G_{q}}\left(\pi_{q}(\cU) \right) \right) Q'.
\end{equation}

\noindent
$(ii)$ For the geodesic projections, we have that 
\begin{equation*}
\cH_{(\pi_{q}(Q\cU), Q \cdot R)}(Q\cU) = Q \cH_{(\pi_{q}(\cU), R)}(\cU). 
\end{equation*}

\noindent
$(iii)$ For any $\cW_{R} \in \pi_{q}^{-1}(R)$, the effective geodesic decomposition of $\cU$ wrt $(R, \cW_{R})$ is fully determined by that of $Q\cU$ wrt $(Q\cdot R, Q\cW_{R})$.
\end{corollary}

\begin{proof}
$(i)$ and $(ii)$ follow from Lemma $\ref{CutGrass}$, Corollary $\ref{O(d)andLogGrass}$ and Proposition $\ref{holSG}$. Then, $(iii)$ is deduced from $(i)$ and $(ii)$. 
\end{proof}

\subsubsection{Geodesic parametrization of an orthogonal matrix}\label{geoParamO(d)}

In this paragraph, we consider a fixed flag $\cR=(R_{i})_{i} \in \FI$ and a matrix $\Gamma \in O(d)$ which spans $\cR$. Now, set
\begin{equation}\label{defWR}
\W_{\cR} := \left\{ Q \in O(d) : \forall~ 1 \leq i \leq r, ~\pi^{i}(Q^{(i)}) \notin \Cut(R_{i}) \textrm{ i.e. } Q \Poi Q' \notin \Cut(R_{i}) \right\}.  
\end{equation}

\noindent
For $C \in \W_{\cR}$ identified with $\zeta(C) := \left( C^{(i)} \right)_{i}$, consider the collection of geodesic decompositions of $C^{(i)}$ wrt $R_{i}$, illustrated hereafter, where 
$(\ref{linkPiSproof})$ justifies that $\pii(C) = \left( \pi^{i}\left( C^{(i)}\right) \right)_{i}$. 

\[
\xymatrix{
    C \ar[rr]_{\zeta} \ar[rrd]_{\pii} && \zeta(C) := \left( C^{(i)}\right)_{i} \ar[d]^{(\pi^{i})_{i}} \ar@{.>}[rr]  && \left( \cH^{q_{i}}_{(\pi^{i}\left( C^{(i)}\right), R_{i})}\left( C^{(i)} \right) \right)_{i} \ar[d]^{(\pi^{i})_{i}} \\
    && \pii(C) = \left( \pi^{i}\left( C^{(i)}\right) \right)_{i} \ar@{.>}[rr] && \cR = (R_{i})_{i}
    }
\]

\begin{definition}
The \textit{geodesic parametrization} of $C \in \W_{\cR}$ (or of the associated full flag) wrt $(\cR, \Gamma)$ is defined as the collection of \textit{effective} geodesic decompositions of $C^{(i)}$ wrt $\left(R_{i}, \Gamma^{(i)} \right)$, for $1 \leq i \leq r$. This parametrization determines $C \in \W_{\cR}$ uniquely. 
\end{definition}

\begin{lemma}\label{lemGeoDecE}
For $C \in \W_{\cR}$, set $E:=\Gamma' C$. Then, $E \in \W_{\PoI}$ and the geodesic parametrization of $C$ wrt $(\cR, \Gamma)$ is fully determined by that of $E$ wrt $(\PoI , I_{d})$. 
\end{lemma}

\begin{proof}
We apply Corollary $\ref{geoDecO(d)}$ with $Q=\Gamma'$. Thus, the geodesic parametrization of $C$ wrt $(\cR, \Gamma)$ is fully determined by that of $E:=\Gamma'C$ wrt $(\Gamma' * \cR, \Gamma' \Gamma) = ( \Gamma^{-1} * \cR , I_{d})$. Now, $\Gamma \in O(d)$ spans the flag $\cR$. So, by $(\ref{remarkSpanFlag})$, $\cR=\Gamma*\PoI$, which implies that $\Gamma^{-1} * \cR = \PoI$. 
\end{proof}

\subsubsection{Tangent spaces at standard Grassmannians}

A geodesic parametrization wrt $(\PoI , I_{d})$ involves the maps $\Log^{G^{i}}_{\Poi}$, for $1 \leq i \leq r$, each valued in the tangent space $T_{\Poi} G^{i}$. Thus, we describe hereafter the structure of the latter.

\begin{lemma}\label{visualToi}
For $1 \leq i \leq r$, set $\Toi := T_{\Poi} G^{i}$. Then,  
\begin{equation*}
\T_{0}^{1} = \left\{
\begin{pmatrix}
0_{q_{1}} & A^{1}_{>} \\
\left(A^{1}_{>}\right)' & 0 \\
\end{pmatrix} : 
A^{1}_{>} \in \R^{q_{1} \times (q_{2}+...+q_{r})}
\right\}
~ , ~ 
\T_{0}^{r} = \left\{
\begin{pmatrix}
0 & \left(A^{r}_{<} \right)' \\
A^{r}_{<} & 0_{q_{r}} \\
\end{pmatrix} : 
A^{r}_{<} \in \R^{q_{r} \times (q_{1}+...+q_{r-1})}
\right\}
\end{equation*}

\begin{equation*}
\Toi = \left\{
\begin{pmatrix}
0 & \left(A^{i}_{<}\right)' & 0 \\
A^{i}_{<} & 0_{q_{i}} & A^{i}_{>} \\
0 & \left(A^{i}_{>}\right)' & 0
\end{pmatrix} : 
A^{i}_{<} \in \R^{q_{i} \times (q_{1}+...+q_{i-1})} ~,~ A^{i}_{>} \in \R^{q_{i} \times (q_{i+1}+...+q_{r})}
\right\}, \quad i \notin \left\{1, r\right\}.
\end{equation*}

\end{lemma}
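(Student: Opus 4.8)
The plan is to compute $T_{\Poi} G^{i}$ directly from the characterization $(\ref{tangentSpaceGrass})$ of the tangent space to a Grassmannian, applied with $q=q_{i}$ and $P=\Poi$. Recall that $\Poi = \mathrm{Diag}[0_{q_{1}}, \ldots, I_{q_{i}}, \ldots, 0_{q_{r}}]$, so I first write an arbitrary $\Delta \in \Sym$ in block form according to the partition $(\beta_{k})_{1 \leq k \leq r}$, say $\Delta = (\Delta^{(k,\ell)})_{k,\ell}$ with $\Delta^{(\ell,k)} = (\Delta^{(k,\ell)})'$. Then I substitute into the defining equation $\Delta \Poi + \Poi \Delta = \Delta$ and read off what it imposes on each block.

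The key computation is as follows. Since $\Poi$ has the identity in the $i$-th diagonal block and zero elsewhere, left-multiplication $\Poi \Delta$ keeps only the $i$-th block-row of $\Delta$ (and zeroes the rest), while right-multiplication $\Delta \Poi$ keeps only the $i$-th block-column. Hence, for the block $(k,\ell)$, the equation $\Delta \Poi + \Poi \Delta = \Delta$ reads $\mathbf{1}_{\{\ell = i\}} \Delta^{(k,\ell)} + \mathbf{1}_{\{k = i\}} \Delta^{(k,\ell)} = \Delta^{(k,\ell)}$, i.e. $(\mathbf{1}_{\{k=i\}} + \mathbf{1}_{\{\ell=i\}} - 1)\Delta^{(k,\ell)} = 0$. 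The coefficient is $1$ when exactly one of $k,\ell$ equals $i$, it is $-1$ when neither equals $i$, and it is $+1$ again when both equal $i$; so the equation forces $\Delta^{(k,\ell)} = 0$ unless exactly one of $k, \ell$ equals $i$, and imposes no constraint in the remaining (off-diagonal, $i$-incident) blocks beyond symmetry. Writing $A^{i}_{<} := \Delta^{(i,1)} \,|\, \cdots \,|\, \Delta^{(i,i-1)}$ (the part of the $i$-th block-row to the left of the diagonal, of size $q_i \times (q_1+\cdots+q_{i-1})$) and $A^{i}_{>} := \Delta^{(i,i+1)} \,|\, \cdots \,|\, \Delta^{(i,r)}$ (of size $q_i \times (q_{i+1}+\cdots+q_r)$), and using $\Delta^{(k,i)} = (\Delta^{(i,k)})'$, one obtains exactly the displayed block form, with the cases $i=1$ (no left part) and $i=r$ (no right part) as the stated specializations.

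The verification is in two inclusions: every matrix of the displayed form is symmetric and satisfies $\Delta \Poi + \Poi \Delta = \Delta$ (a direct block multiplication), and conversely any $\Delta$ satisfying $(\ref{tangentSpaceGrass})$ has the claimed shape by the coefficient analysis above. I expect no genuine obstacle here: the only things to be careful about are the bookkeeping of which block-rows/columns survive the multiplications by $\Poi$, and correctly handling the boundary indices $i \in \{1, r\}$ where one of $A^{i}_{<}, A^{i}_{>}$ is vacuous, which is why those two cases are displayed separately. This is essentially a routine block-matrix calculation, so in the paper I would simply carry it out and point to $(\ref{tangentSpaceGrass})$.
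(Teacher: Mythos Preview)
Your approach is correct and essentially the same as the paper's: both start from the characterization $(\ref{tangentSpaceGrass})$ and compute blockwise. The paper verifies only the inclusion that the displayed sets lie in $\Toi$ and then concludes equality by a dimension count (both sides have dimension $q_i(d-q_i)$), whereas you argue both inclusions directly via the block coefficients; either way is fine.

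One slip to fix in your write-up: in the sentence ``The coefficient is $1$ when exactly one of $k,\ell$ equals $i$ \ldots'', the coefficient $\mathbf{1}_{\{k=i\}} + \mathbf{1}_{\{\ell=i\}} - 1$ is actually $0$ in that case (and $+1$ when both equal $i$, $-1$ when neither does). Your conclusion---that the equation forces $\Delta^{(k,\ell)}=0$ unless exactly one of $k,\ell$ equals $i$, and imposes no constraint otherwise---is correct and is consistent with the coefficient being $0$ there, so this is just a miscount in the exposition, not a flaw in the argument.
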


\begin{proof}
By $(\ref{tangentSpaceGrass})$, for any $P \in G(q,d)$, $T_{P}G(q,d) = \left\{ \Delta \in \mathrm{Sym}_{d} : \Delta P + P \Delta = \Delta \right\}$. So, 
\begin{equation}\label{conditionToi}
\Delta \in \Toi \iff \Delta \in \Sym \textrm{ and } \Delta\Poi + \Poi\Delta = \Delta. 
\end{equation}

\noindent
We readily check that the sets described in the rhs' in the statement of this Lemma satisfy $(\ref{conditionToi})$. Thus, each of these sets is a linear subspace of $\Toi$, and clearly, its  dimension is equal to that of $\Toi$. This concludes the proof. 
\end{proof}

\begin{corollary}\label{coroSumSquares}
For any $d \times d$ matrix $A=(a_{k\ell})_{1 \leq k \leq \ell \leq d}$, its Frobenius norm is 
\begin{equation}\label{defFrobNorm}
\left\| A \right\|_{F} := \tr(A'A) = \sqrt{\sum (a_{k\ell})^{2}}. 
\end{equation}

\noindent
Let $1 \leq i \leq r$. Then, for all $\cS \in \Toi$, 
\begin{equation}\label{sumSquaresProj}
\sum\limits_{i=1}^{r} \left\| \cS \right\|_{F}^{2} = 
4 \sum\limits_{1 \leq i < j \leq r} \left\| \cS^{(i,j)} \right\|_{F}^{2}.
\end{equation}
\end{corollary}

\begin{proof}
This identity is an easy consequence of Lemma $\ref{visualToi}$ and $(\ref{defFrobNorm})$. 
\end{proof}

\subsection{Preliminaries for estimating the flag of PS's}\label{discrepancyFlags}

The flag $F^{\I}(\Sigma)$ of PS's is estimated by the \textit{flag of eigenprojections} $F^{\I}\left( \ECM \right)$, of type $\I$, introduced in paragraph $\ref{flagEigenproj}$ below. Thus, we need a distance on $\FI$ to measure the deviation between flags of type $\I$. However, there is no known metric on $\FI$ for which the geodesic distance on $\FI$ is available in closed form. To overcome this lack, we introduce in $(\ref{defDex})$ below, an \textit{extrinsic distance} $\fD_{\ex}$ on $\FI$ i.e. that is the restriction to $\FI$ of a distance on $\GI$, which has an explicit expression.

\subsubsection{Flag of eigenprojections}\label{flagEigenproj}

Let $\Sigma \in \Sym$, whose flag of eigenspaces, denoted by $F^{\I}(\Sigma)$, is of type $\I := (q_{i})_{1 \leq i \leq r}$. For $S \in \Symdiff$, let $\Gamma$ and $C$ be respective orthogonal matrices of eigenvectors of $\Sigma$ and $S$, associated to eigenvalues in decreasing order.

For all $1 \leq i \leq r$, $\pi^{i}\left(C^{(i)} \right)$ is the projector onto a $q_{i}$-linear subspace spanned by eigenvectors of $C$. This projector is independent of the choice of $C$ i.e. depends only on $S$. Thus it is called the $i$-th \textit{eigenprojection} of $S$ wrt $\I$, denoted by $P_{i}(S)$. Then, the collection $F^{\I}(S):=\left( P_{i}(S) \right)_{i}$ form a flag, called the \textit{flag of eigenprojections} of $S$ wrt $\I$. By $(\ref{linkPiSproof})$, 
\begin{equation}\label{FISequalPiiC}
F^{\I}(S) = \left( P_{i}(S) \right)_{i} = \left( \pi^{i}\left(C^{(i)} \right) \right)_{i} = \pii(C).
\end{equation}

\noindent
Assume that $C \in \W_{F^{\I}(\Sigma)}$. Set $E:=\Gamma' C$. Since $\Gamma$ spans $F^{\I}(\Sigma)$, by Lemma $\ref{lemGeoDecE}$, the geodesic paramatrization of $C$ wrt $\left( F^{\I}\left( \Sigma \right) , \Gamma \right)$ is fully determined by that of $E$ wrt $\left( \PoI , I_{d} \right)$, for which 
\begin{equation}\label{piiOfE(i)}
\pi^{i}\left(E^{(i)}\right) = \Gamma' \cdot \pi^{i}\left(C^{(i)} \right) = \Gamma' \cdot P_{i}(S), \quad 1 \leq i \leq r.
\end{equation}

\subsubsection{Extrinsic distance}
Let $\cR=(R_{i})_{i} \in \FI$ and $\W_{\cR}$ defined in $(\ref{defWR})$. Then, 
\begin{equation*}
\pii \left( \W_{\cR} \right) = \left\{ \cP=(P_{i})_{i} \in \FI : \forall 1 \leq i \leq r, ~P_{i} \notin \Cut(R_{i}) \right\}. 
\end{equation*}

\noindent
Now, for $\cP \in \pii \left( \W_{\cR} \right)$, set  
\begin{equation}\label{defDex}
\fD_{\ex}(\cP, \cR) := \left( \sum\limits_{i=1}^{r} \delta^{i}(P_{i}, R_{i})^{2} \right)^{1/2} 
\enskip \textrm{where} \enskip
\delta^{i}(P_{i}, R_{i}) := \left\| \Log_{P_{i}}^{G^{i}} \left(R_{i} \right) \right\|_{F}.
\end{equation}

\noindent
Thus, $\delta^{i}$ is the geodesic distance on $G^{i}$. The distance $\fD_{\ex}$ is \textit{extrinsic}, i.e. it is the restriction to $\FI$ of a distance on $\GI$, and has an \textit{explicit expression}, provided by that of $\Log_{P_{i}}^{G^{i}}$. Clearly, the action of $O(d)$ on $\FI$ preserves $\fD_{ex}$, i.e. for all $Q \in O(d)$, 
\begin{equation}\label{OdPreservesDex}
\fD_{\ex}(Q*\cP, Q*\cR) = \fD_{\ex}(\cP, \cR). 
\end{equation}

\noindent
In particular, with the notations of paragraph $\ref{geoParamO(d)}$, we deduce from $(\ref{OdPreservesDex})$ that for $E:=\Gamma' C$, 
\begin{equation*}
\fD_{ex}\left( \pii(E) , \PoI \right) = \fD_{ex}\left( \Gamma' *\pii(C) , \Gamma' *F^{\I}(\Sigma) \right) = \fD_{ex}\left( F^{\I}(S) , F^{\I}(\Sigma) \right).
\end{equation*}

\subsubsection{A discrepancy between flags}

In Section $\ref{sec4}$, to obtain a pivotal statistic for the flag $F^{\I}(\Sigma)$ of PS's, we need to standardize some Gaussian rv's. Due to such renormalizations, the resulting confidence regions for $F^{\I}(\Sigma)$ are expressed in terms of a \textit{discrepancy} between $F^{\I}(\Sigma)$ and $F^{\I}(\ECM)$, instead of the extrinsic distance $\fD_{ex}$ between them. In fact, this discrepancy, defined in $(\ref{discrepancyQGamma})$ below, is a \textit{deformation} of $\fD_{ex}$. This means that, without these renormalizations, this discrepancy is equal to $\fD_{ex}$. Let $\cD(\I)$ be the set of all block-diagonal matrices whose structure agrees with the type $\I$, i.e. 
\begin{equation*}
M \in \cD(\I) \iff M=\Dg \left( M^{(1,1)}, ..., M^{(i,i)}, ..., M^{(r,r)} \right). 
\end{equation*}

\noindent
\textit{In this paragraph, $\Gamma$ denotes a variable matrix in $O(d)$}. Let $\K=\left(K^{i}\right)_{1 \leq i \leq r}$ be a collection of matrices such that for all $1 \leq i \leq r$, $K^{i} \in \cD(\I)$.  Let $\cP=(P_{i})_{i} \in \FI$. By Lemma $\ref{CutGrass}$, 
\begin{equation}\label{GammaInWP}
\Gamma \in \W_{\cP} \iff \forall~ 1 \leq i \leq r, ~\Gamma' P_{i}\Gamma \notin \Cut(\Poi). 
\end{equation}

\noindent
Then, for $\Gamma \in \W_{\cP}$, set 
\begin{equation}\label{discrepancyQGamma}
\fD_{\K}\left( \Gamma, \cP \right) := \sqrt{\sum\limits_{i=1}^{r} \left( \delta_{K^{i}}^{i}\left( \Gamma, P_{i}\right) \right)^{2}} 
\enskip \textrm{where} \enskip
\delta_{K^{i}}^{i}(\Gamma, P_{i}) := \left\| K^{i} \Logi \left(\Gamma' \cdot P_{i} \right) K^{i} \right\|_{F}.
\end{equation}

\begin{lemma}\label{InvarianceDiscrepancy}
Let $\cP=(P_{i})_{i} \in \FI$ and $\Gamma \in \W_{\cP}$. Then, $\fD_{\K}\left( \Gamma, \cP \right)$ is independent of the class of $\Gamma$ modulo $O(\I)$, i.e. for all $H \in O(\I)$, \begin{equation}\label{invaOF}
\Gamma H \in \W_{\cP}
\quad \textrm{and} \quad
\fD_{\K}\left( \Gamma H, \cP \right) = \fD_{\K}\left( \Gamma, \cP \right). 
\end{equation}
\end{lemma}

\begin{proof}
Recall that $O(\I)$ is the isotropy group of $\PoI$ under the action of $O(d)$ on $\FI$, i.e. for all $H \in O(\I)$ and $1 \leq i \leq r$, $H \cdot \Poi = \Poi$. So, by $(\ref{GammaInWP})$,  for all $H \in O(\I)$, 
\begin{equation}\label{invarianceCut}
\Gamma \in \W_{\cP} \iff \Gamma H \in \W_{\cP}. 
\end{equation}

\noindent
Then, by Corollary $\ref{O(d)andLogGrass}$, for any $\cP=(P_{i})_{i} \in \FI$, $\Gamma \in \W_{\cP}$ and $H \in O(\I)$, 
\begin{equation*}
\Logi \left( (\Gamma H)' \cdot R_{i} \right) = H' \left( \Logi \left( \Gamma' \cdot R_{i} \right) \right)H.
\end{equation*}

\noindent
This, combined to Remark $\ref{commHK}$ below, implies that for all $1 \leq i \leq r$, 
\begin{equation*}
\delta_{K^{i}}^{i}(\Gamma H, P_{i}) = 
\left\| K^{i} \left(H' \Log_{\Poi}^{G^{i}} \left( \Gamma' \cdot P_{i} \right) H\right) K^{i} \right\|_{F} = 
\left\| H' \left(K^{i} \Log_{\Poi}^{G^{i}}\left(\Gamma' \cdot P_{i} \right) K^{i} \right) H \right\|_{F}.
\end{equation*}

\noindent
Finally, by the properties of $\left\| \cdot \right\|_{F}$, the rhs hereabove is equal to $\delta_{K^{i}}^{i}(\Gamma, P_{i})$. 
\end{proof}

\begin{remark}\label{commHK}
For all $H \in O(\I)$ and $K \in \mathcal{D}(\I)$, $H$ and $K$ commute, i.e. $HK=KH$. 
\end{remark}

\noindent
Now, we may introduce a discrepancy between flags of type $\I$ as follows. 

\begin{definition}\label{defKdiscrepancy}
Let $\K=\left(K^{i}\right)_{i} \in \left( \cD(\I) \right)^{r}$. Let $\cP, \cR \in \FI$ such that $\cR \in \pii\left( \W_{\cP} \right)$. Then, by Lemma $\ref{InvarianceDiscrepancy}$, we may define the $\K$-discrepancy between $\cR$ and $\cP$ as follows.   
\begin{equation}\label{defDiscrepancyFlags}
\widetilde{\fD}_{\K}\left( \cR, \cP \right) := \fD_{\K}\left( \Gamma, \cP \right), 
\end{equation}

\noindent
for any $\Gamma \in O(d)$ such that $\pii(\Gamma)=\cR$, which is defined modulo $O(\I)$. In particular, it is easy to check that, for $\mathbb{I} := (I_{d}, ..., I_{d}) \in \left( \cD(\I) \right)^{r}$, we have that $\widetilde{\fD}_{\mathbb{I}}\left( \cR, \cP \right) = \fD_{\ex}\left( \cR, \cP \right)$. Thus, the $\K$-discrepancy $\widetilde{\fD}_{\K}$ is interpreted as a \textit{deformation} of the extrinsic distance $\fD_{\ex}$. 
\end{definition}

\begin{remark}
In general, $\widetilde{\fD}_{\K}$ is not a distance on $\FI$. However, we readily prove that $\widetilde{\fD}_{\K}$ fulfills the following properties. 
$(i)$ The separation property holds, i.e. for all $\cR, \cP \in \FI$
\begin{equation}\label{separationDiscrepancy}
\widetilde{\fD}_{\K}\left( \cR, \cP \right) = 0 \iff \cR = \cP. 
\end{equation}

\noindent
$(ii)$ The $\K$-discrepancy preserves the action of $O(d)$, i.e. for all $Q \in O(d)$,
\begin{equation*}
\widetilde{\fD}_{\K}\left( Q*\cR, Q*\cP \right) = \widetilde{\fD}_{\K}\left( \cR, \cP \right), \quad \cR, \cP \in \FI. 
\end{equation*}
\end{remark}

\section{Limit theorem and estimation of the flag of PS's}\label{sec4}

If $E_{n} \in \W_{\PoI}$, then we establish in Theorem $\ref{theoCLTgrass}$ the limiting distribution of its geodesic parametrization wrt $\left( \PoI, I_{d} \right)$. In fact, $\Pr\left( E_{n} \notin \W_{\PoI} \right) \xrightarrow[]{} 0$ as $n \ra \infty$: see $(i)$ of Proposition $\ref{PropUnSniStar}$ in Appendix. Thus, we obtain Gaussian and Haar limiting distributions, as in Theorem $\ref{MainAnderson}$, where $E_{n}$ is parametrized by its usual coordinates. Then, we derive from $(i)$ of Theorem $\ref{theoCLTgrass}$, confidence regions 
for the flag $F^{\I}(\Sigma)$ of PS's, in terms of a discrepancy (in the sense of Definition $\ref{defKdiscrepancy}$) between $F^{\I}(\Sigma)$ and the flag $F^{\I}\left(\ECM \right)$ of \textit{eigenprojections} of $\ECM$, both of type $\I$.

\subsection{Statement of main theorem}

First, for $1 \leq i \leq r$, we define the limit function $g_{0}^{i}$ valued in $\Toi$ involved in $(\ref{CLTgrass})$ below. For $u \in \Sym$, set 
\begin{equation*}
\cG^{i,j}(u) := \frac{1}{\lambda_{i}-\lambda_{j}}u^{(i,j)}, \quad 1 \leq i, j \leq r \textrm{ with } i \neq j.
\end{equation*}

\noindent
Then, consider the matrices $\cG^{i}_{<}(u)$ and $\cG^{i}_{>}(u)$ of respective sizes $q_{i} \times (q_{1}+...+q_{i-1})$ and $q_{i} \times (q_{i+1}+...+q_{r})$ defined by 
\begin{equation*}
\cG^{i}_{<}(u) := 
\begin{pmatrix}
\cG^{i,1}(u) & ... & \cG^{i,i-1}(u)
\end{pmatrix}
\quad \textrm{and}\quad
\cG^{i}_{>}(u) := 
\begin{pmatrix}
\cG^{i,i+1}(u) & ... & \cG^{i,r}(u)
\end{pmatrix}
\end{equation*}

\noindent
Finally, define the map $g_{0}^{i} : \Sym \lra \Toi$ by 
\begin{equation*}
g_{0}^{1}(u) = 
\begin{pmatrix}
0_{q_{1}} & \cG^{1}_{>}(u) \\
\left(\cG^{1}_{>}(u)\right)' & 0 \\
\end{pmatrix}
\qquad , \qquad 
g_{0}^{r}(u) = 
\begin{pmatrix}
0 & \left(\cG^{r}_{<}(u) \right)' \\
\cG^{r}_{<}(u) & 0_{q_{r}} \\
\end{pmatrix}
\end{equation*}

\begin{equation*}
g_{0}^{i}(u) =
\begin{pmatrix}
0 & \left(\cG^{i}_{<}(u)\right)' & 0 \\
\cG^{i}_{<}(u) & 0_{q_{i}} & \cG^{i}_{>}(u) \\
0 & \left(\cG^{i}_{>}(u)\right)' & 0
\end{pmatrix}, 
\quad i \neq 1, r. 
\end{equation*}

\noindent
Now, we can state our main result hereafter, where the rv $U$ is defined in Teorem $\ref{CLTsym}$.

\begin{theorem}\label{theoCLTgrass}
$(i)$ For any $1 \leq i \leq r$, the following CLT holds in $\Toi$. 
\begin{equation}\label{CLTgrass}
\Gni := \sqrt{n}\Logi \left( \pi^{i}\left( E_{n}^{(i)} \right) \right) \xrightarrow[n \rightarrow \infty]{d} \goi(U). 
\end{equation}

\noindent
$(ii)$ For any $1 \leq i \leq r$, the following convergence holds in $O(q_{i})$. 
\begin{equation}\label{TheoCVtoHaar}
\Hni := \left( I_{d}^{(i)} \right)' \left( \cH^{q_{i}}_{\left( \pi^{i}\left( E_{n}^{(i)} \right) , P_{0}^{i} \right)} \left( E_{n}^{(i)} \right) \right) 
\xrightarrow[n \rightarrow \infty]{d} \cE^{i,i}(U), 
\end{equation}

\noindent
where the distribution of $\cE^{i,i}(U)$ is the conditional Haar invariant distribution. 
\end{theorem}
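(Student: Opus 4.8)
The plan is to derive Theorem~\ref{theoCLTgrass} by applying the generalized $\delta$-method of Theorem~\ref{AndersonCrit} to maps built from the geodesic decomposition, in exactly the same way Anderson's Theorem~\ref{MainAnderson} is obtained from Theorem~\ref{AndersonCrit} via Table~\ref{ElementsAnderson}. First I would write $G_n^i$ and $H_n^i$ as $g_n(V_n)$ for suitable measurable maps $g_n$ on $\Sni$. Concretely, using $(\ref{geoDecRandomEn})$ and Lemma~\ref{lemGeoDecE}, on the event $\{U_n \in \SniStar\}$ one has $\Logi(\Gamma' P_i(\ECM)\Gamma) = \Logi(\pni(U_n))$, so I define $\gamma_n^i : \Sni \lra \Toi$ by $\gamma_n^i(u) := \sqrt{n}\,\Logi(\pni(u))$ (with $\gamma_n^i(0)=0$ since $\pni(0)=\Poi$), and similarly $h_n^i : \Sni \lra O(q_i)$ by $h_n^i(u) := (I_d^{(i)})' \cH^{q_i}_{(\pni(u),\Poi)}(e_n(u)^{(i)})$ (with $h_n^i(0)=(I_d^{(i)})' I_d^{(i)} = I_{q_i}$). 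With $V_n = U_n \ind^+_{\{U_n\in\SniStar\}}$ as before, the truncations in $(\ref{CLTgrass})$ and $(\ref{TheoCVtoHaar})$ mean precisely $G_n^i = \gamma_n^i(V_n)$ and $H_n^i = h_n^i(V_n)$ a.s.

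Next I would set up the remaining entries of the table: $\bS = \Sym$, $\bS_n = \Sni$, $\bS_n^* = \SniStar$, $\T = \Toi$ (resp.\ $O(q_i)$), $\bS_0$ the open set $(\ref{defS0})$, and $V$ the limit rv $U$ from Theorem~\ref{CLTsym}. The convergence $V_n \xrightarrow{d} U$ follows because $U_n \in \SnAstar$ a.s.\ and, crucially, $U_n \in \SniStar$ a.s.\ — this last point is the content of Proposition~\ref{PropUnSniStar} referenced in the text, giving both $V_n \xrightarrow{d} U$ and hypothesis $(\ref{VnSnStar})$ since $\Pr(V_n \in \SniStar)=1$ (or at least $\to 1$). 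Also $\Pr(U \in \bS_0)=1$ is immediate. The limit functions are $g_0^i : \bS_0 \lra \Toi$ from the statement for part $(i)$, and $\cE^{i,i} : \bS_0 \lra O(q_i)$ from Proposition~\ref{AndersonPseudoCont} for part $(ii)$; the identifications of the limiting laws — that $g_0^i(U)$ is the announced Gaussian on $\Toi$ and that $\cE^{i,i}(U)$ is conditional Haar — come from Lemma~\ref{lawLimitFunctions} together with $(\ref{GijOfU})$, which already expresses $\cG^{i,j}(\cU) = \frac{1}{\lambda_i-\lambda_j}\cU^{(i,j)}$, exactly the blocks assembled into $g_0^i$.

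The real work is verifying the pseudo-continuity condition $(\ref{pseudoCont})$: for $\cU \in \bS_0$ and any sequence $\cU_n \in \SniStar$ with $\cU_n \to \cU$, one needs $\gamma_n^i(\cU_n) \to g_0^i(\cU)$ and $h_n^i(\cU_n) \to \cE^{i,i}(\cU)$. I would obtain this by combining Proposition~\ref{AndersonPseudoCont} — which already gives $\eni(\cU_n) \to \cE^{i,i}(\cU)$ and $\fnij(\cU_n) \to \cF^{i,j}(\cU)$, hence $e_n(\cU_n)^{(i,i)} \to \cE^{i,i}(\cU)$ and $\sqrt{n}\,e_n(\cU_n)^{(i,j)} \to \cF^{i,j}(\cU)$ — with continuity of the geometric maps. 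For $(i)$: $\pni(\cU_n) = (e_n(\cU_n)^{(i)})(e_n(\cU_n)^{(i)})'$; writing $e_n(\cU_n)^{(i)}$ in terms of its blocks, the diagonal block tends to the orthogonal matrix $\cE^{i,i}(\cU)$ while the off-diagonal blocks are $O(n^{-1/2})$, so $\pni(\cU_n) \to \Poi \notin \Cut(\Poi)$, and then one uses joint continuity of $\Logi$ near the diagonal together with a first-order expansion: $\sqrt{n}\,\Logi(\pni(\cU_n))$ picks out exactly the linear term governed by the off-diagonal blocks $\fnij(\cU_n) \to \cF^{i,j}(\cU)$, and the algebraic relation $(\ref{system22GL})$–$(\ref{GijOfU})$ identifies the limit as $g_0^i(\cU)$. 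For $(ii)$: by Proposition~\ref{holSG}, $\cH^{q_i}_{(\pni(\cU_n),\Poi)}(e_n(\cU_n)^{(i)})$ is a continuous function of $(\pni(\cU_n), \Delta_n := \Logi(\pni(\cU_n)), e_n(\cU_n)^{(i)})$; since $\Delta_n \to 0$ and $e_n(\cU_n)^{(i)}$ has diagonal block $\to \cE^{i,i}(\cU)$, the holonomy map degenerates to the identity in the limit (the projection of a frame onto its own limiting span), so $(I_d^{(i)})' \cH^{q_i}_{(\pni(\cU_n),\Poi)}(e_n(\cU_n)^{(i)}) \to (I_d^{(i)})'\big(I_d^{(i)}\cE^{i,i}(\cU)\big) = \cE^{i,i}(\cU)$. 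The main obstacle is this $\sqrt{n}$-rescaled expansion for $\Logi$: one must show that $\sqrt{n}\,\Logi(\pni(\cU_n))$ converges, which requires quantifying how $\Logi$ behaves at first order near $\Poi$ — concretely, using $(\ref{explicitLogGrass})$ to write $\Logi(\pni(\cU_n)) = [\Omega_n, \Poi]$ with $\Omega_n = \tfrac12\log_m((I_d-2\pni(\cU_n))(I_d-2\Poi))$, expanding $\log_m$ to first order in the $O(n^{-1/2})$ perturbation, and checking that the linear term reproduces exactly the block structure of $g_0^i$. Once $(\ref{pseudoCont})$ is established, Theorem~\ref{AndersonCrit} applies verbatim and yields $(\ref{CLTgrass})$ and $(\ref{TheoCVtoHaar})$; the joint/independence statements, if needed, follow from the same table applied to the product map, together with Lemma~\ref{lawLimitFunctions}.
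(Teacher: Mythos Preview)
Your proposal is essentially the paper's own proof: you set up exactly the same table (the paper's $\gni$, $\hni$, $\Vni$, $\Sni$, $\SniStar$ are your $\gamma_n^i$, $h_n^i$, $V_n$, $\bS_n$, $\bS_n^*$), invoke Proposition~\ref{PropUnSniStar} for hypothesis~$(\ref{VnSnStar})$, and reduce everything to the pseudo-continuity check~$(\ref{pseudoCont})$, which is precisely the content of the paper's Propositions~\ref{pseudoContSym} and~\ref{pseudoContHaar}.

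There is, however, one genuine subtlety you gloss over in the expansion for part~$(i)$. You write that one expands $\log_m$ ``to first order in the $O(n^{-1/2})$ perturbation'', but the perturbation $\Kni := (I_d-2\pni(\cU_n))(I_d-2\Poi) - I_d$ is \emph{not} $O(n^{-1/2})$: its $(i,i)$-diagonal block carries the term $I_{q_i} - \cA_n^{(i,i)}(\cA_n^{(i,i)})'$, which is only $o(1)$ with no rate (Anderson gives no rate for $\eni(\cU_n)\to\cE^{i,i}(\cU)$). Hence a naive first-order expansion leaves a remainder $\sqrt{n}\cdot o(1)$ that need not vanish. The paper's resolution (Lemmas~\ref{bracketVanish} and~\ref{KniVanish}) is the algebraic observation that this uncontrolled diagonal part $\fAni$ lies in $\cD_0^i(\I)$, so $[\fAni,\Poi]=0$ and, more generally, all products of the form $\fAni\,(\cdot)\,\fAni$ are killed by the bracket with $\Poi$; only then do the surviving terms in $\sum_{k\ge 2}[(\Kni)^k,\Poi]$ become $o(n^{-1/2})$. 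You should make this cancellation explicit, since without it the $\sqrt{n}$-rescaled expansion does not close.
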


\begin{remark}\label{mineVsAnderson}
In conclusion, we conjecture a possible application of $(\ref{TheoCVtoHaar})$. 
\end{remark}

\subsection{Convergence to a $\chi^{2}$ distribution}\label{paragraphKhi2}

In Corollary $\ref{corollaryPivotal}$, we derive a \textit{pivotal statistic} for the matrix $\Gamma$, i.e. which depends only on the sample, and whose limit distribution does not depend on any unknown parameter. Thus, Lemma $\ref{towardsChiSquare}$ allows to \textit{concatenate the CLT's} of $(\ref{CLTgrass})$ for $1 \leq i \leq r$, providing the convergence of a pivotal statistic to a $\chi^{2}$ distribution. Namely, by Theorem \ref{CLTsym}, for $i \neq j$, the entries of $\left(g_{0}^{i}(U) \right)^{(i,j)} = \frac{1}{\lambda_{i}-\lambda_{j}}U^{(i,j)}$ are real i.i.d rv's $\cN(0,\sigma_{i,j}^{2})$, of standard deviation $\sigma_{i,j} = \frac{\sqrt{\lambda_{i}\lambda_{j}}}{|\lambda_{i}-\lambda_{j}|}$. We normalize these entries by setting
\begin{equation}\label{gBarKi}
\overline{g}_{0}^{i}(U) := K^{i}g^{i}(U)K^{i} \quad \textrm{where} \quad 
K^{i}:=\Dg\left( \frac{1}{\sigma_{i,1}}I_{q_{1}}~,~ ... ~,~I_{q_{i}} ~,~ ... ~,~ \frac{1}{\sigma_{i,r}}I_{q_{r}} \right).
\end{equation}

\begin{lemma}\label{towardsChiSquare}
$(i)$ The non-null entries of the matrix $\overline{g}_{0}^{i}(U)$ are real iid rv's $\mathcal{N}(0,1)$.

$(ii)$ The blocks $\left\{ \left( \overline{g}_{0}^{i}(U) \right)^{(i,j)} : 1 \leq i < j \leq r \right\}$ are mutually independent.

$(iii)$ The real rv $\frac{1}{4} \sum\limits_{i=1}^{r} \left\| \overline{g}_{0}^{i}(U) \right\|_{F}^{2}$ is distributed as a $\chi^{2}_{\mathrm{D}^{\I}}$, where 
\begin{equation*}
\mathrm{D}^{\I} := \frac{1}{2}\left( d^{2}-\sum\limits_{i=1}^{r} q_{i}^{2} \right). 
\end{equation*}
\end{lemma}

\begin{proof}
Clearly, $\overline{g}_{0}^{i}(U)$ is a random matrix valued in $\Toi$ and for $i \neq j$, 
\begin{equation}\label{goiBar}
\left( \overline{g}_{0}^{i}(U) \right)^{(i,j)} = \frac{1}{\sigma_{i,j}} \left( \frac{1}{\lambda_{i}-\lambda_{j}}U^{(i,j)} \right).     
\end{equation}

\noindent
This proves $(i)$ and $(ii)$. Then, $(iii)$ holds, by Corollary $\ref{coroSumSquares}$ applied with $\cS=\overline{g}_{0}^{i}(U)$.
\end{proof}

\begin{proposition}\label{convergenceChi2}
For $(K^{i})_{1 \leq i \leq r}$ defined in $(\ref{gBarKi})$, 
\begin{equation}\label{mergeCLTs}
\frac{n}{4} \sum\limits_{i=1}^{r} \left\| K^{i} \Logi \left(\Gamma' \cdot P_{i}\left( \hat{\Sigma}_{n}\right) \right) K^{i} \right\|_{F}^{2} \xrightarrow[n \rightarrow \infty]{d} \chi^{2}_{\mathrm{D}^{\I}}, 
\end{equation}
\end{proposition} 

\begin{proof}
By $(i)$ of Theorem $\ref{theoCLTgrass}$ and $(iii)$ of Lemma $\ref{towardsChiSquare}$, 
\begin{equation}\label{proofCVchi2}
\frac{n}{4} \sum\limits_{i=1}^{r} \left\| K^{i} \Logi \left( \pi^{i}\left( E_{n}^{(i)} \right) \right) K^{i} \right\|_{F}^{2} \xrightarrow[n \rightarrow \infty]{d} \frac{1}{4} \sum\limits_{i=1}^{r} \left\| \overline{g}_{0}^{i}(U) \right\|_{F}^{2} \sim \chi^{2}_{\mathrm{D}^{\I}}. 
\end{equation}

\noindent
Now, by $(\ref{piiOfE(i)})$ applied to $S = \ECM \in \Symdiff$ a.s., we have that $\pi^{i}\left( E_{n}^{(i)} \right) = \Gamma' \cdot P_{i}\left( \hat{\Sigma}_{n}\right)$ a.s. So, by injecting this relation in the lhs of $(\ref{proofCVchi2})$, we deduce that $(\ref{mergeCLTs})$ holds.  
\end{proof}

However, in the lhs of $(\ref{mergeCLTs})$, the matrix $K^{i}$ still depends on the unknown parameters $(\lambda_{i})_{i}$. Thus, in $K_{i}$, we replace $\lambda_{i}$ by $\widehat{\lambda}_{n}^{i}$, where (with the notations of paragraph 
$\ref{preliminariesAnderson}$), denoting by $\left(\mu_{k}\left(\ECM \right) \right)_{1 \leq k \leq d}$ the eigenvalues of $\ECM$ and by $(\beta_{i})_{i}$ the partition of $\left\{ 1, ..., d \right\}$ wrt $\I$, 
\begin{equation*}
\widehat{\lambda}_{n}^{i} := \frac{1}{q_{i}}\sum\limits_{k \in \beta_{i}} \mu_{k}\left(\ECM \right) \xrightarrow[n \rightarrow \infty]{a.s.} \lambda_{i}. 
\end{equation*}

\begin{corollary}\label{corollaryPivotal}
The statistic $\hS_{n}$ defined hereafter is a pivotal statistic for $\Gamma$. 
\begin{equation}\label{PivotStat}
\hS_{n} := \frac{n}{4} \sum\limits_{i=1}^{r}
\left\| \hK^{i}_{n} \Logi \left( \Gamma' \cdot P_{i}\left(\ECM \right) \right) \hK^{i}_{n} \right\|_{F}^{2} \xrightarrow[n \rightarrow \infty]{d} \chi^{2}_{\mathrm{D}^{\I}}, 
\end{equation}

\noindent
where, for $1 \leq i \leq r$, $\hK^{i}_{n}$ is derived from $K^{i}$ by replacing $\lambda_{i}$ by $\widehat{\lambda}_{n}^{i}$. 
\end{corollary}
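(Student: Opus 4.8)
The plan is to deduce Corollary~\ref{corollaryPivotal} from the Proposition establishing $(\ref{mergeCLTs})$ by a Slutsky-type argument, replacing the deterministic normalizing matrices $K^{i}$ by the data-dependent $\hK_{n}^{i}$. First I would recall that $(\ref{mergeCLTs})$ already gives the convergence in distribution of the statistic built with the true $K^{i}$ to $\chi^{2}_{\mathrm{D}^{\I}}$. It then suffices to show that the difference between $\hT_{n}$ and the statistic in $(\ref{mergeCLTs})$ converges to $0$ in probability; by Slutsky's lemma the limit is unchanged, and since the limit does not depend on $\Sigma$ (only on $\mathrm{D}^{\I}$, which is determined by the known type $\I$) while $\hT_{n}$ is a function of $\Gamma$ and the sample alone, it is pivotal for $\Gamma$ in the sense defined just above the statement.

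The key steps, in order: (1) Observe that $\widehat{\lambda}_{n}^{i} = \frac{1}{q_{i}}\sum_{k \in \beta_{i}} \mu_{k}(\ECM)$ is a consistent estimator of $\lambda_{i}$; this follows from $\ECM \to \Sigma$ a.s.\ (or in probability, which suffices here) together with continuity of the eigenvalue maps $\mu_{k}$ on $\Sym$, so $\widehat{\lambda}_{n}^{i} \xrightarrow[]{\Pr} \lambda_{i}$. (2) Consequently $\widehat{\sigma}_{i,j,n} := \frac{\sqrt{\widehat{\lambda}_{n}^{i}\widehat{\lambda}_{n}^{j}}}{|\widehat{\lambda}_{n}^{i}-\widehat{\lambda}_{n}^{j}|} \xrightarrow[]{\Pr} \sigma_{i,j}$ for $i \neq j$, since $\lambda_{i} \neq \lambda_{j}$ makes the map $(x,y) \mapsto \sqrt{xy}/|x-y|$ continuous at $(\lambda_{i},\lambda_{j})$; hence $\hK_{n}^{i} \xrightarrow[]{\Pr} K^{i}$ entrywise. (3) Write $A_{n}^{i} := \sqrt{n}\,\Logi(\Gamma' P_{i}(\ECM)\Gamma)\,\mathbf{1}^{+}_{\{U_{n} \in \SniStar\}}$, which by $(\ref{CLTgrass})$ converges in distribution to $g_{0}^{i}(U)$, hence is $O_{\Pr}(1)$. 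The summand in $\hT_{n}$ is $\tfrac14\|\hK_{n}^{i} A_{n}^{i} \hK_{n}^{i}\|_F^{2}$ and the corresponding summand in $(\ref{mergeCLTs})$ is $\tfrac14\|K^{i} A_{n}^{i} K^{i}\|_F^{2}$; their difference is bounded, via sub-multiplicativity of the Frobenius norm and the identity $x^2-y^2=(x-y)(x+y)$, by a quantity of the form $\|\hK_{n}^{i}-K^{i}\| \cdot (\text{polynomial in } \|\hK_{n}^{i}\|,\|K^{i}\|,\|A_{n}^{i}\|)$. Since $\|\hK_{n}^{i}-K^{i}\| \xrightarrow[]{\Pr} 0$, $\|\hK_{n}^{i}\| = O_{\Pr}(1)$, and $\|A_{n}^{i}\| = O_{\Pr}(1)$, the product converges to $0$ in probability. (4) Summing over the finitely many $i$ and multiplying by $n/4$ — wait, the $n$ is already absorbed in $A_n^i$ — summing over $i$, then $\hT_{n} - (\text{statistic of }(\ref{mergeCLTs})) \xrightarrow[]{\Pr} 0$. (5) Apply Slutsky and $(\ref{mergeCLTs})$ to conclude $\hT_{n} \xrightarrow[]{d} \chi^{2}_{\mathrm{D}^{\I}}$, and note the pivotal property from the structure of $\hT_{n}$.

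The main obstacle, such as it is, is bookkeeping rather than anything deep: one must handle the truncation indicators $\mathbf{1}^{+}_{\{U_{n} \in \SniStar\}}$ carefully (but these are common to both statistics and, by Proposition~\ref{PropUnSniStar}, become $1$ with probability tending to $1$, so they cause no trouble), and one must be a little careful that $\|A_n^i\| = O_{\Pr}(1)$ genuinely follows from convergence in distribution (it does, by tightness of weakly convergent sequences via the continuous mapping theorem applied to the norm). A cleaner alternative avoiding the $O_{\Pr}(1)$ manipulation is to invoke the continuous mapping theorem directly on the joint convergence $\big((A_n^i)_i, (\hK_n^i)_i\big) \xrightarrow[]{d} \big((g_0^i(U))_i, (K^i)_i\big)$ — the second component being deterministic, joint convergence is automatic from (2) and $(\ref{CLTgrass})$ plus Slutsky — and then applying the continuous map $\big((B_i)_i,(L_i)_i\big) \mapsto \tfrac14\sum_i \|L_i B_i L_i\|_F^2$, whose value at the limit is $\tfrac14\sum_i\|K^i g_0^i(U) K^i\|_F^2 = \tfrac14\sum_i \|\overline{g}_0^i(U)\|_F^2 \sim \chi^2_{\mathrm{D}^{\I}}$ by Lemma~\ref{towardsChiSquare}. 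I would present this latter route, as it is shorter and sidesteps the need to re-derive $(\ref{mergeCLTs})$ as an intermediate step.
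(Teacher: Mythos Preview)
Your proposal is correct and follows exactly the idea the paper relies on: the paper gives no detailed proof of Corollary~\ref{corollaryPivotal} beyond the sentence preceding it, which says to replace each $\lambda_{i}$ by a consistent estimator $\widehat{\lambda}_{n}^{i}$, leaving the Slutsky/continuous-mapping justification implicit. Your write-up supplies precisely that justification, and your first route (show the difference with the statistic of $(\ref{mergeCLTs})$ is $o_{\Pr}(1)$, then apply Slutsky) is the safest, since it uses only the marginal tightness of each $A_{n}^{i}$ together with the already-established $(\ref{mergeCLTs})$, and avoids any need to argue joint convergence of $(A_{n}^{i})_{i}$ separately.
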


\begin{proof}
Since $\hK^{i}_{n} \lra K^{i}$ a.s. as $n \ra \infty$, this follows from Proposition $\ref{convergenceChi2}$. 
\end{proof}

\subsection{Confidence regions}\label{paragraphCR}

For $n \geq 1$, set $\widehat{\K}_{n}:=\left(\widehat{K}^{i}_{n} \right)_{1 \leq i \leq r}$. Thus, by definition,  
\begin{equation}\label{avoidTruncationsTn}
\hS_{n} = \frac{n}{4} \left[ \fD_{\widehat{\K}_{n}}\left( \Gamma, F^{\I}\left( \ECM \right) \right) \right]^{2}, 
\end{equation}

\noindent
where $\fD_{\widehat{\K}_{n}}$ is defined in $(\ref{discrepancyQGamma})$. Thus, $\widetilde{\fD}_{\widehat{\K}_{n}}$ is the $\widehat{\K}_{n}$-discrepancy, in the sense of Definition $\ref{defKdiscrepancy}$. Since $\pii(\Gamma) = F^{\I}(\Sigma)$, Corollary $\ref{corollaryPivotal}$ and $(\ref{avoidTruncationsTn})$ imply that 
\begin{equation}\label{discrepancyCVinD}
\hS_{n} = \frac{n}{4} \left[ \widetilde{\fD}_{\widehat{\K}_{n}} \left( F^{\I}\left(\Sigma \right), F^{\I}\left(\ECM \right) \right) \right]^{2} \xrightarrow[n \rightarrow \infty]{d} \chi^{2}_{\mathrm{D}^{\I}}. 
\end{equation}

\noindent
We deduce hereafter 
confidence regions
for $F^{\I}\left( \Sigma \right)$ of the desired form.

\begin{proposition}\label{ConfidenceRegion}
For any $\alpha \in (0,1)$, let $\chi^{2}_{\mathrm{D}^{\I}}(1-\alpha)$ be the quantile of order $(1-\alpha)$ of the $\chi^{2}_{\mathrm{D}^{\I}}$ distribution. For $n \geq 1$, set 
\begin{equation*}
R_{n,\alpha} := \left\{ \cP \in \FI ~:~ \frac{n}{4} \left[ \widetilde{\fD}_{\widehat{\K}_{n}}\left( \cP , F^{\I}\left( \ECM \right) \right) \right]^{2} \leq \chi^{2}_{\mathrm{D}^{\I}}(1-\alpha) \right\}.
\end{equation*}

\noindent 
Then, by $(\ref{discrepancyCVinD})$, $R_{n,\alpha}$ is a 
confidence region
for $F^{\I}\left( \Sigma \right)$ of asymptotic level $(1-\alpha)$. By $(\ref{separationDiscrepancy})$, 
\begin{equation*}
\widetilde{\fD}_{\widehat{\K}_{n}}\left( \cP , F^{\I}\left( \ECM \right) \right) = 0 \iff Q = F^{\I}\left( \ECM \right). 
\end{equation*}

\noindent
So, $R_{n,\alpha}$ is interpreted as a full deformed ellipsoid in $\FI$, whose center is $F^{\I}\left( \ECM \right)$. 
\end{proposition}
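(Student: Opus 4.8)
The plan is to show that $R_{n,\alpha}$, as defined, inherits its asymptotic coverage property directly from the convergence in distribution $(\ref{discrepancyCVinD})$, and then to justify the geometric interpretation via the separation property $(\ref{separationDiscrepancy})$. First I would unwind the definition: by construction, $F^{\I}(\Sigma) \in R_{n,\alpha}$ if and only if $\frac{n}{4}\left[ \widetilde{\fD}_{\widehat{\K}_{n}}\left( F^{\I}\left( \Sigma \right) , F^{\I}\left( \ECM \right) \right) \right]^{2} \leq \chi^{2}_{\mathrm{D}^{\I}}(1-\alpha)$. Hence $\Pr\left( F^{\I}(\Sigma) \in R_{n,\alpha} \right) = \Pr\left( \frac{n}{4}\left[ \widetilde{\fD}_{\widehat{\K}_{n}}\left( F^{\I}\left( \Sigma \right) , F^{\I}\left( \ECM \right) \right) \right]^{2} \leq \chi^{2}_{\mathrm{D}^{\I}}(1-\alpha) \right)$.

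Next I would apply $(\ref{discrepancyCVinD})$ together with the portmanteau lemma. Since the limit law $\chi^{2}_{\mathrm{D}^{\I}}$ is continuous, its cdf has no atoms, so $\chi^{2}_{\mathrm{D}^{\I}}(1-\alpha)$ is a continuity point of that cdf; the standard fact that $Z_n \xrightarrow{d} Z$ with $Z$ having continuous cdf implies $\Pr(Z_n \leq t) \to \Pr(Z \leq t)$ at every $t$ then gives
\begin{equation*}
\Pr\left( \frac{n}{4}\left[ \widetilde{\fD}_{\widehat{\K}_{n}}\left( F^{\I}\left( \Sigma \right) , F^{\I}\left( \ECM \right) \right) \right]^{2} \leq \chi^{2}_{\mathrm{D}^{\I}}(1-\alpha) \right) \xrightarrow[n \rightarrow \infty]{} \Pr\left( \chi^{2}_{\mathrm{D}^{\I}} \leq \chi^{2}_{\mathrm{D}^{\I}}(1-\alpha) \right) = 1-\alpha,
\end{equation*}
by the definition of the quantile. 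Therefore $\Pr\left( F^{\I}(\Sigma) \in R_{n,\alpha} \right) \to 1-\alpha$, which is exactly the assertion that $R_{n,\alpha}$ is a confidence region for $F^{\I}(\Sigma)$ of asymptotic level $(1-\alpha)$.

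For the final geometric claim, I would invoke $(\ref{separationDiscrepancy})$ from Remark $\ref{discrepancyNotDistance}$: since $\widetilde{\fD}_{\widehat{\K}_{n}}\left( \cP , F^{\I}\left( \ECM \right) \right) = 0$ if and only if $\cP = F^{\I}\left( \ECM \right)$, the set $R_{n,\alpha}$ is the sublevel set $\left\{ \cP : \widetilde{\fD}_{\widehat{\K}_{n}}\left( \cP , F^{\I}\left( \ECM \right) \right) \leq \frac{2}{\sqrt{n}}\sqrt{\chi^{2}_{\mathrm{D}^{\I}}(1-\alpha)} \right\}$ of the $\widehat{\K}_{n}$-discrepancy about the point $F^{\I}\left( \ECM \right)$, which vanishes only at that point; writing $\widetilde{\fD}_{\widehat{\K}_{n}}$ locally in terms of the Riemannian Logarithms $\Log_{P_i}^{G^i}$ deformed by the positive-definite blocks $\widehat{K}^{i}_{n}$ exhibits this sublevel set as a deformed ellipsoid centered at $F^{\I}\left( \ECM \right)$. (The displayed equivalence in the statement should of course read $\cP = F^{\I}\left( \ECM \right)$ rather than $Q = F^{\I}\left( \ECM \right)$; this is a harmless typo.)

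The only delicate point is the use of the portmanteau lemma at the specific threshold $\chi^{2}_{\mathrm{D}^{\I}}(1-\alpha)$: one must note that the limiting $\chi^2$ distribution with $\mathrm{D}^{\I} \geq 1$ degrees of freedom has a continuous distribution function, so every real number — in particular the quantile $\chi^{2}_{\mathrm{D}^{\I}}(1-\alpha)$ — is a continuity point, and the convergence of probabilities of the half-lines $(-\infty, t]$ is therefore valid. This is routine, so no step here constitutes a genuine obstacle; all the analytic and geometric substance has already been established in $(\ref{discrepancyCVinD})$ and in Remark $\ref{discrepancyNotDistance}$.
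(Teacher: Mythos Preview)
Your proposal is correct and follows exactly the approach implicit in the paper: the proposition is stated there without a separate proof, with the justification embedded via the references to $(\ref{discrepancyCVinD})$ and $(\ref{separationDiscrepancy})$, and your argument is simply a careful unwinding of those two citations together with the standard continuity-of-cdf observation for the $\chi^{2}$ limit. Your remark about the typo $Q$ versus $\cP$ is also apt.
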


\begin{corollary}
Fix $Q_{0} \in O(d)$. Consider the following null hypothesis assumption. $H_{0}$ : $\pi^{\mathrm{I}}\left(Q_{0}\right)=F^{\mathrm{I}}\left(\Sigma\right)$. For any $\alpha \in (0,1)$, consider the test which accepts $H_{0}$ when $\pii(Q_{0}) \in R_{n,\alpha}$ and rejects $H_{0}$ else. Then, this test is of asymptotic level $\alpha$.
\end{corollary}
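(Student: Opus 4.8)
The plan is to obtain this from the confidence region of Proposition~\ref{ConfidenceRegion} via the standard duality between confidence regions and tests. First I would note that, under $H_{0}$, one has $\pii(Q_{0}) = F^{\I}(\Sigma)$, so the test rejects $H_{0}$ exactly on the event $\left\{ F^{\I}(\Sigma) \notin R_{n,\alpha} \right\}$; hence the probability of a type-I error equals $\Pr\left( F^{\I}(\Sigma) \notin R_{n,\alpha} \right)$.

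Next I would unfold the definition of $R_{n,\alpha}$ by substituting $\cP = \pii(Q_{0}) = F^{\I}(\Sigma)$: modulo the null event on which $\widetilde{\fD}_{\widehat{\K}_{n}}$ is not defined, the event $\left\{ F^{\I}(\Sigma) \notin R_{n,\alpha} \right\}$ coincides with $\left\{ \frac{n}{4}\left[ \widetilde{\fD}_{\widehat{\K}_{n}}\left( F^{\I}(\Sigma), F^{\I}(\ECM) \right) \right]^{2} > \chi^{2}_{\mathrm{D}^{\I}}(1-\alpha) \right\}$. By the convergence $(\ref{discrepancyCVinD})$ the real-valued statistic in this event converges in distribution to a $\chi^{2}_{\mathrm{D}^{\I}}$ variable, and since the cumulative distribution function of $\chi^{2}_{\mathrm{D}^{\I}}$ is continuous and strictly increasing on $(0, \infty)$, the level $\chi^{2}_{\mathrm{D}^{\I}}(1-\alpha)$ is a continuity point of the limiting law. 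The portmanteau theorem then yields
\begin{equation*}
\Pr\left( F^{\I}(\Sigma) \notin R_{n,\alpha} \right) \xrightarrow[n \rightarrow \infty]{} \Pr\left( \chi^{2}_{\mathrm{D}^{\I}} > \chi^{2}_{\mathrm{D}^{\I}}(1-\alpha) \right) = \alpha,
\end{equation*}
which is the asserted asymptotic level.

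The only point requiring care is the null event just mentioned, namely the event on which $\widetilde{\fD}_{\widehat{\K}_{n}}\left( \pii(Q_{0}), F^{\I}(\ECM) \right)$ fails to be defined, i.e. on which $F^{\I}(\ECM) \notin \cW_{F^{\I}(\Sigma)}$. By Lemma~\ref{lemGeoDecE}$(i)$ together with Remark~\ref{CLTgrassOfUn}, this event coincides a.s. with $\bigcup_{i=1}^{r}\left\{ U_{n} \notin \SniStar \right\}$, which has probability tending to $0$ by Proposition~\ref{PropUnSniStar}. It therefore contributes only an $o(1)$ term to the rejection probability and does not affect the limit; this is exactly the role of the truncation indicator $\mathbf{1}^{+}_{\left\{ U_{n} \in \SniStar \right\}}$ built into $\hT_{n}$ and into $(\ref{mergeCLTs})$. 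Consequently there is no substantial obstacle here: the corollary follows routinely from Proposition~\ref{ConfidenceRegion} and the convergence $(\ref{discrepancyCVinD})$.
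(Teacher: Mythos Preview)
Your proof is correct. The paper does not provide an explicit proof of this corollary, treating it as an immediate consequence of Proposition~\ref{ConfidenceRegion} via the standard duality between confidence regions and tests; your argument spells out precisely that duality, including the care needed for the asymptotically negligible event on which the discrepancy is undefined.
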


\subsection{Simulation}

Figure 1 illustrates the convergence in distribution of $\hat{T}_{n}$ to a $\chi^{2}$ distribution. The parameters for this simulation are $d=4$ and $\mathrm{I}=(1,1,1,1)$. Then,  $D^{\mathrm{I}}=6$. For the sample size, we take $n=10000$. The histogram in blue represents the distribution of $\hat{T}_{n}$ and the curve in red is that of the probability distribution function of the $\chi^{2}_{6}$ distribution. We see that the distribution of $\hat{T}_{n}$ is indeed very close to that of the $\chi^{2}_{6}$ one.

\begin{figure}\label{SimulationChi}
\includegraphics[width=8cm]{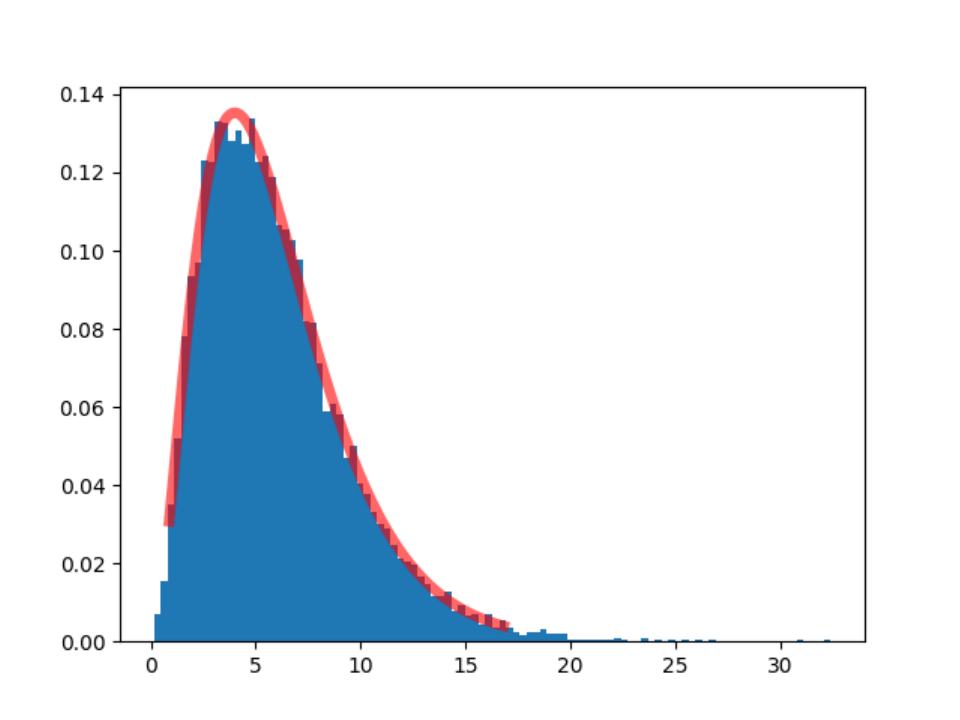}
\setlength\abovecaptionskip{0.01ex}
\caption{Illustration of the convergence of $\hat{T}_{n}$ to the $\chi^{2}_{6}$ distribution}
\end{figure}

\section{Conclusion}\label{sec5}

Given a Gaussian random vector $X$ whose covariance matrix $\Sigma$ has possibly repeated eigenvalues, we develop a geometric method to estimate the flag $F^{\I}(\Sigma)$ of its PS's, for which we provide 
confidence regions
and implementable tests. These results open many questions, among which the following ones, which we present with their motivations. 

\smallskip 

\noindent
$(i)$ Are our geometric CLT's of Theorem $\ref{theoCLTgrass}$ valid when the distribution of $X$ is elliptic? 
\\
If so, then, for such a distribution, the estimation of $F^{\I}(\Sigma)$ would be derived as in paragraphs $\ref{paragraphKhi2}$ and $\ref{paragraphCR}$, which are only based on such CLT's. This question is motivated by \cite{Tyler 1981}. Therein, when $X$ is elliptic, a CLT for each $P_{i}(\ECM )$ is obtained, but not of the form of 
Eq.(\ref{CLTgrass})

\smallskip 

\noindent
$(ii)$ Does $H_{n}^{i}$ converge in $O(q_{i})$ to $E^{i,i}$ wrt the Kullback-Leibler (KL) divergence? \\
By Remark $\ref{lemarkMinGD}$, among all $q_{i}$-frames generating $\rg(\Poi)$, we have that $\cH^{q_{i}}_{\left( \Gamma' P_{i}( \ECM )\Gamma, P_{0}^{i} \right)} \left( E_{n}^{(i)} \right)$ minimizes the geodesic distance in $\St_{q_{i}}$ to $E_{n}^{(i)}$. We conjecture that this geometric optimality should imply a stronger mode of convergence of $H_{n}^{i}$ to $E^{i,i}$, i.e. wrt the KL divergence in the compact group $O(q_{i})$. Our conjecture is motivated by the case of iid convolutions on compact groups, for which the convergence in distribution to the Haar measure has been first obtained: See \cite{Johnson 2004} and references therein. Later, it was proved in \cite{Harremoes 2009} or \cite{Johnson and Suhov 2000}, by information theoretic methods, that they converge wrt the KL divergence.

\section{Appendix: Method for the proof of Theorem $\ref{theoCLTgrass}$}\label{sec6}

This Appendix is devoted to the proof of Theorem $\ref{theoCLTgrass}$ and to some tools developed for it. The order of appearance of the proofs does not follow that of the preceding sections. Instead, the most technical parts are presented at the end. Thus, the proof of the generalized $\delta$-method of Theorem $\ref{AndersonCrit}$ is deferred to the end of this Appendix, i.e. to subsection $\ref{proofAndersonCrit}$. 

Now, we describe the proof of Theorem $\ref{theoCLTgrass}$. By Theorem $\ref{CLTsym}$, $U_{n}$ converges in distribution to $U$. Thus, we prove Theorem $\ref{theoCLTgrass}$ by expressing, in $(\ref{GniHniOfUn})$, the lhs' of $(\ref{CLTgrass})$ and $(\ref{TheoCVtoHaar})$, i.e. $\Gni$ and $\Hni$, as functions of $U_{n}$ and then by deriving, from the generalized $\delta$-method of Theorem $\ref{AndersonCrit}$, that these functions of $U_{n}$ converge to functions of $U$ which are the rhs' of $(\ref{CLTgrass})$ and $(\ref{TheoCVtoHaar})$. However, $\Gni$ and $\Hni$ are defined only when $\pi^{i}\left( E_{n}^{(i)} \right) \notin \Cut(\Poi)$. So, instead of applying Theorem $\ref{AndersonCrit}$ with $U_{n}$, we apply it with  \textit{truncations} of $U_{n}$, in the sense of paragraph $\ref{defTrunc}$.

\subsection{Preliminaries from Anderson's proof}

 First, we present results from the proof of Theorem $\ref{MainAnderson}$ which will be used later for ours. Consider the open subset of $\Sym$ defined by  
\begin{equation}\label{defS0}
\bS_{0}:=\left\{ S \in \Sym : S^{(i,i)} \in \mathrm{Sym}_{q_{i}}^{\neq} , 1 \leq i \leq r \right\}. 
\end{equation}

\noindent
Let $\left(u_{n}\right)_{n \geq 1}$ be a sequence valued in $\left( \SnAstar \right)_{n \geq 1}$ which converges to $u \in \bS_{0}$. Set 
\begin{equation}\label{setEnPn}
\cE_{n}:=e_{n}(u_{n}).
\end{equation}

\begin{proposition}\label{AndersonPseudoCont}
For $1 \leq i,j \leq r$ with $i \neq j$, we have that $\cE_{n}^{(i,i)}$ and $\sqrt{n}\cE_{n}^{(i,j)}$ converge to finite limits denoted respectively by $\cE^{i,i}(u)$ and $\cF^{i,j}(u)$ which satisfy  
\begin{equation}\label{system22GL}
\lambda_{i}\cG^{i,j}(u) + \lambda_{j}\cL^{i,j}(u) = u^{(i,j)} 
\quad \textrm{and} \quad
\cG^{i,j}(u) + \cL^{i,j}(u) = 0, 
\end{equation}

\noindent
with $\cG^{i,j}(u) := \cE^{i,i}(u) \left( \cF^{j,i}(u) \right)'$ and $\cL^{i,j}(u) := \cF^{i,j}(u) \left( \cE^{j,j}(u) \right)'$. Thus, by $(\ref{system22GL})$,
\begin{equation}\label{GijOfU}
\cG^{i,j}(u) = \frac{1}{\lambda_{i}-\lambda_{j}} u^{(i,j)}. 
\end{equation}

\end{proposition}

\noindent
Recall that $U$ is the limit rv of Theorem $\ref{CLTsym}$. Then, the following Lemma is obtained in \cite{Anderson 1963}. 

\begin{lemma}\label{lawLimitFunctions}
$\cE^{i,i}(U)$ is distributed as $E^{i,i}$ in $(i)$ of Theorem $\ref{MainAnderson}$ and $\cF^{i,j}(U)$ is distributed as $F^{i,j}$ in $(ii)$ of Theorem $\ref{MainAnderson}$.
\end{lemma}

\subsection{Expression of $\Gni$ and $\Hni$ as functions of a truncation of $U_{n}$}

\subsubsection{Expression of $\Gni$ and $\Hni$ as functions of $U_{n}$}

First, in $(\ref{EnOfUn})$, we express $E_{n}$ in function of $U_{n}$. Recall that $E_{n}=\psi(T_{n})$, where $\psi : \Symdiff \lra O(d)$ is the eigenvector map of Definition $\ref{DefEigenvectorMap}$. In view  of truncations, we extend the map $\psi$ as follows: see Remark $\ref{remarkValuesAt0}$.

\begin{definition}
Let $\wpsi : \Symdiff \bigcup \left\{ \Delta \right\} \lra O(d)$ be the map such that for $S \in \Symdiff$, $\wpsi(S)=\psi(S)$ and $\wpsi(\Delta) = I_{d}$. Then, $\wpsi$ is called the \textit{extended eigenvector map}. 
\end{definition}

\noindent
$(i)$ By definition, $U_{n}:=\sqrt{n}\left( T_{n}-\Delta \right)$. Then, $T_{n}=\phi_{n}(U_{n})$, where
\begin{equation*}
\phi_{n}(u) = \Delta+ n^{-1/2}u, \quad u \in \Sym. 
\end{equation*}

\noindent
So, $E_{n}=\psi(\phi_{n}(U_{n}))$ provided that $U_{n} \in \phi_{n}^{-1}\left( \Symdiff \right)$. Thus, consider the sets 
\begin{equation*}
\SnAstar := \phi_{n}^{-1}\left( \Symdiff \right)
\quad \textrm{and} \quad
\SnA := \SnAstar \bigcup \left\{0 \right\}. 
\end{equation*}

\noindent
Since $\phi_{n}(0)=\Delta$, we may define the map $e_{n} : \SnA \lra O(d)$ by $e_{n}(u) = \wpsi \left( \phi_{n}(u) \right)$. Then, 
\begin{equation}\label{EnOfUn}
U_{n} \in \SnAstar \implies E_{n} = e_{n}(U_{n}).
\end{equation}

\noindent
$(ii)$ Now, for $1 \leq i \leq r$, we express $\Gni$ and $\Hni$ as functions of $U_{n}$. Define the map $\pni$ by
\begin{equation*}
\pni : \SnA \lra G^{i} \quad \textrm{and} \quad \pni(u) = \pi^{i}\left( e_{n}(u)^{(i)} \right).
\end{equation*}

\noindent
If $\pni(u) \notin \Cut(\Poi)$, then $\gni(u)$ and $\hni(u)$ introduced hereafter are well-defined.
\begin{equation}\label{defgnihni}
\gni(u) := \sqrt{n} \Logi \left( \pni(u) \right) \in \Toi
\enskip \textrm{and} \enskip 
\hni(u) := \cH^{i}_{(\pni(u), \Poi)}\left( e_{n}(u)^{(i)} \right) \in \St^{i}
\end{equation}

\noindent
Thus, we consider the sets  
\begin{equation*}
\Wni := \left\{ u \in \Sym : \pni(u) \notin \Cut(\Poi) \right\}
\quad \textrm{and} \quad
\SniStar := \SnAstar \bigcap \Wni. 
\end{equation*}

\noindent
Now, Equation $(\ref{EnOfUn})$ implies that $\pi^{i}\left( \EnOfi \right) = \pni(U_{n})$ provided that $U_{n} \in \SnAstar$. So, 
\begin{equation}\label{GniHniOfUn}
U_{n} \in \SniStar \implies \Gni = \gni(U_{n}) \enskip \textrm{and} \enskip \Hni = \left( I_{d}^{(i)} \right)' \hni(U_{n}).
\end{equation}

\subsubsection{Truncations of rv's}\label{defTrunc}

Let $M$ be a metric space with Borel $\sigma$-algebra $\cB$. 

\begin{definition}
Let $Z : (\Omega, \cA) \lra (M, \cB)$ be a rv and $A \in \cA$. If $M$ is a vector space, let $Z\mathbf{1}^{+}_{A}$ be the rv such that $(Z\mathbf{1}^{+}_{A})(\omega)=Z(\omega)$ if $\omega \in A$ and $(Z\mathbf{1}^{+}_{A})(\omega)=0 \in M$ else. If $M$ is a subgroup of $GL_{q}(\R)$, denote by $Z\mathbf{1}^{\times}_{A}$ the rv such that, $(Z\mathbf{1}^{\times}_{A})(\omega)=Z(\omega)$ if $\omega \in A$ and $(Z\mathbf{1}^{\times}_{A})(\omega)=I_{q}$ else. Then,  we say that the rv's $Z\mathbf{1}^{+}_{A}$ and $Z\mathbf{1}^{\times}_{A}$ are \textit{truncations} of $Z$ wrt $A$.
\end{definition}

\begin{lemma}\label{truncCV}
Let $(X_{n})_{n \geq 1}$ be a sequence of rv's valued in the metric space $M$ and $(A_{n})_{n \geq 1}$ a sequence of events in $\cA$. Assume that $X_{n}\mathbf{1}^{+}_{A_{n}} \xrightarrow[]{d} X$ or $X_{n}\mathbf{1}^{\times}_{A_{n}} \xrightarrow[]{d} X$ as $n \rightarrow \infty$, where $X$ is a rv valued in $M$. If $P(A_{n}) \xrightarrow[n \rightarrow \infty]{} 1$, then $X_{n} \xrightarrow[n \rightarrow \infty]{d} X$ also.  
\end{lemma}

\begin{proof}
The proof follows readily from the definitions. 
\end{proof}

\subsubsection{Expression of $\Gni$ and $\Hni$ as functions of $\Vni$}

The rv $U_{n}$ is valued in the vector space $\Sym$. So, for $1 \leq i \leq r$, we define below a truncation of $U_{n}$. Thus, set
\begin{equation*}
\Vni:=U_{n} \mathbf{1}^{+}_{\Ani} \quad \textrm{where } \Ani := \left\{ U_{n} \in \SniStar \right\}. 
\end{equation*}

\noindent
So, $\Vni$ is valued in the set $\Sni$ defined hereafter, which holds for all $\omega \in \Omega$ and not only a.s. 
\begin{equation*}
\Sni := \SniStar \bigcup \left\{ 0 \right\}.
\end{equation*}

\begin{remark}\label{remarkValuesAt0}
By definition of the map $\wpsi$, $e_{n}(0) = \wpsi(\phi_{n}(0))=\wpsi(\Delta)=I_{d}$, so that $\pni(0)=\pi^{i}\left( I_{d}^{(i)} \right) = \Poi$. So, $\gni(u)$ and $\hni(u)$ are also well-defined when $u=0$, for which 
\begin{equation}\label{mainValuesAt0}
\gni(0) = 0 \in \Toi
\quad \textrm{and} \quad 
\left( I_{d}^{(i)} \right)' \hni(0) = I_{q_{i}}.
\end{equation}
\end{remark}

\noindent
So, we may consider the maps $\gni : \Sni \lra \Toi$ and $\hni : \Sni \lra \St^{i}$ defined by $(\ref{defgnihni})$. Then, by $(\ref{GniHniOfUn})$ and $(\ref{mainValuesAt0})$, we express hereafter $\Gni \mathbf{1}^{+}_{\Ani}$ and $\Hni \mathbf{1}^{\times}_{\Ani}$ as functions of $\Vni$:
\begin{equation}\label{lhsFunctionTrunc}
\Gni \mathbf{1}^{+}_{\Ani} = \gni(\Vni)
\quad \textrm{and} \quad
\Hni \mathbf{1}^{\times}_{\Ani} = \left( I_{d}^{(i)} \right)' \hni(\Vni). 
\end{equation}

\subsection{Proof of Theorem $\ref{theoCLTgrass}$}

We prove Theorem $\ref{theoCLTgrass}$ by applying Theorem $\ref{AndersonCrit}$. Thus, we define in Table $\ref{AuxiliaryFunctionsGeometric}$ the elements to which we apply Theorem $\ref{AndersonCrit}$. In Table $\ref{AuxiliaryFunctionsGeometric}$, $\bS_{0}$ is the subset defined in $(\ref{defS0})$ and the map $\cE^{i,i} : \bS_{0} \lra O(q_{i})$ is defined in Proposition $\ref{AndersonPseudoCont}$.

\begin{table}[htbp]
\centering
\noindent\begin{tabular}{|c|c|c|}
\hline
 & proof of $(\ref{CLTgrass})$ & proof of $(\ref{TheoCVtoHaar})$ \\
\hline 
Metric spaces $\bS$ and $\T$ & $\bS = \Sym$ and $\T = \Toi$ & $\bS = \Sym$ and $\T = O(q_{i})$ \\
\hline 
Function $g_{n}$ &  $\gni : \Sni \lra \Toi$  &  $\left( I_{d}^{(i)} \right)' \hni : \Sni \lra O(q_{i})$ \\
\hline
Domain $\bS_{n}$ and subset $\bS_{n}^{*}$ &
\multicolumn{2}{|c|}{$\Sni$ ~and~ $\SniStar$} \\
\hline 
Random variable $V_{n}$ & 
\multicolumn{2}{|c|}{$\Vni=U_{n} \bf{1}^{+}_{\left\{U_{n} \in \SniStar \right\}}$} \\ 
\hline
Limit $V$ of $V_{n}$ & 
\multicolumn{2}{|c|}{$\Vni \xrightarrow[n \ra \infty]{d} U$} \\ 
\hline
Limit function $g_{0}$ & $g_{0}^{i} : \bS_{0} \lra \Toi$ & $\cE^{i,i} : \bS_{0} \lra O(q_{i})$ \\
\hline
\end{tabular}
\caption{Elements to which Theorem $\ref{AndersonCrit}$ is applied}
    \label{AuxiliaryFunctionsGeometric}
\end{table}

\subsubsection{Conditions to apply Theorem $\ref{AndersonCrit}$}

We prove hereafter that all the conditions of Theorem $\ref{AndersonCrit}$ hold with the elements of Table $\ref{AuxiliaryFunctionsGeometric}$. Firstly, by $(ii)$ of Proposition $\ref{PropUnSniStar}$ below, for all $1 \leq i \leq r$, $\Vni \xrightarrow[]{d} U$ as $n \ra \infty$. Then, by $(iii)$ of Proposition $\ref{PropUnSniStar}$, Assumption $(\ref{VnSnStar})$ holds. Finally, by Propositions $\ref{pseudoContSym}$ and $\ref{pseudoContHaar}$ below, Assumption $(\ref{pseudoCont})$ holds.

\begin{proposition}\label{PropUnSniStar}
$(i)$ For all $1 \leq i \leq r$, $\Pr \left( \Ani \right) \xrightarrow[]{} 1$ as $n \ra \infty$, where we recall that $\Ani := \left\{ U_{n} \in \SniStar \right\}$. In particular, we deduce that  
\begin{equation*}
\Pr\left( E_{n} \in \W_{\PoI} \right) = \Pr\left( \forall~ 1 \leq i \leq r,~  U_{n} \in \SniStar \right) \xrightarrow[n \ra \infty]{} 0.
\end{equation*}

\noindent
$(ii)$ For all $1 \leq i \leq r$, the sequence $(\Vni)_{n}$ converges in distribution to $U$ as $n \ra \infty$. 

\vspace{.15cm}

\noindent
$(iii)$ For all $1 \leq i \leq r$, $\Pr\left( \Vni \in \SniStar \right) \lra 1$ as $n \ra \infty$. 
\end{proposition}

\begin{proof}
See paragraph $\ref{proofPropUnSniStar}$. 
\end{proof}

\begin{proposition}\label{pseudoContSym}
Let $\left(u_{n}\right)_{n \geq 1}$ be a sequence valued in $\left( \SniStar \right)_{n \geq 1}$ which converges to $u \in \bS_{0}$. Then, $g_{n}^{i}(u_{n}) \lra g_{0}^{i}(u)$ as $n \ra \infty$. 
\end{proposition}

\begin{proof}
See paragraph $\ref{proofPseudoContSym}$.  
\end{proof}

\begin{proposition}\label{pseudoContHaar}
Let $\left(u_{n}\right)_{n \geq 1}$ be as in Proposition $\ref{pseudoContSym}$. Then, $\left( I_{d}^{(i)} \right)' \hni(u_{n}) \lra \cE^{i,i}(u)$ as $n \ra \infty$, where $\cE^{i,i}(u)$ is defined in Proposition $\ref{AndersonPseudoCont}$. 
\end{proposition}

\begin{proof}
See paragraph $\ref{proofPseudoContHaar}$. 
\end{proof}

\subsubsection{End of proof of Theorem $\ref{theoCLTgrass}$}

We deduce that we may apply Theorem $\ref{AndersonCrit}$, from which $(\ref{lhsFunctionTrunc})$ implies that  
\begin{equation}\label{mainTheoEquiv}
\Gni \mathbf{1}^{+}_{\Ani} = \gni(\Vni)  \xrightarrow[n \rightarrow \infty]{d} \goi(U)
\quad \textrm{and} \quad 
\Hni \mathbf{1}^{\times}_{\Ani} = \left( I_{d}^{(i)} \right)'\hni(\Vni) \xrightarrow[n \rightarrow \infty]{d} \cE^{i,i}(U). 
\end{equation}

\noindent
Finally, by Proposition $\ref{PropUnSniStar}$, $\Pr \left( \Ani \right) \lra 1$ as $n \ra \infty$. So, by Lemma $\ref{truncCV}$ and $(\ref{mainTheoEquiv})$, we conclude that 
$(\ref{CLTgrass})$ and $(\ref{TheoCVtoHaar})$ hold, which proves Theorem $\ref{theoCLTgrass}$.

\subsection{Proof of Proposition $\ref{PropUnSniStar}$, $\ref{pseudoContSym}$ and $\ref{pseudoContHaar}$}

\subsubsection{Proof of Proposition $\ref{PropUnSniStar}$}\label{proofPropUnSniStar}

\begin{proof}
$(i)$ By definition, $U_{n} \in \SniStar$ iff $U_{n} \in \SnAstar$ and $\pni(U_{n}) \notin \Cut(\Poi)$. Recall that $U_{n} \in \SnAstar \implies \pni(U_{n}) = \pi^{i} \left( \EnOfi \right)$ and that $U_{n} \in \SnAstar$ a.s. So, since $\Poi = \IdOfi \left( \IdOfi \right)'$, 
\begin{equation*}
\Pr \left( U_{n} \in \SniStar \right) = \Pr \left( \pi^{i} \left( \EnOfi \right) \notin \Cut(\Poi) \right) = \Pr \left( \EnOfi \left( \EnOfi \right)' \notin \Cut \left( \IdOfi \left( \IdOfi \right)' \right) \right). 
\end{equation*}

\noindent
Now, by the description of the cut locus in Grassmannians given in Lemma $\ref{CutGrass}$,
\begin{equation*}
\EnOfi \left( \EnOfi \right)' \notin \Cut(\Poi) \iff \rk\left( \left( \IdOfi \right)' \EnOfi \right) = q_{i} \iff \rk\left( \EnOfii \right) = q_{i}.
\end{equation*}

\noindent
So, $(i)$ holds provided that $\Pr \left( \rk\left( \EnOfii \right)=q_{i} \right) \lra 1$ as $n \ra \infty$, which we derive by applying Lemma $\ref{Lem2.5Tyler}$ below. We check hereafter that its assumptions hold. First, by Theorem \ref{MainAnderson}, $\EnOfii$ converges in distribution to $E^{i,i}$, where $\rk(E^{i,i}) = q_{i}$ a.s. Furthermore, for all $n \geq 1$, $\mathrm{rk}(E_{n}^{(i,i)}) \leq q_{i}$. So, the assumptions of Lemma $\ref{Lem2.5Tyler}$ hold, from which $(i)$ is deduced.  

\noindent\\
$(ii)$ and $(iii)$: By definition of $\SniStar$ and $\Vni$,
\begin{equation}\label{boundUnStar}
\Pr\left( U_{n} \in \SniStar \right) \leq 
\Pr\left( \Vni=U_{n} \textrm{ and } U_{n} \in \SniStar \right). 
\end{equation}

\noindent
On the one hand, the rhs of $(\ref{boundUnStar})$ is bounded by $\Pr\left( \Vni=U_{n} \right)$. So, by $(i)$, $\Pr\left( \Vni=U_{n} \right) \lra 1$ as $n \ra \infty$. Since 
$(U_{n})$ converges in distribution to $U$, we deduce that $(ii)$ holds. On the other hand, the rhs of $(\ref{boundUnStar})$ is bounded by 
$\Pr\left( \Vni \in \SniStar \right)$. So $(i)$ implies that $(iii)$ holds. 
\end{proof}

\noindent
The following Lemma is used in the proof of Proposition $\ref{PropUnSniStar}$. It is proved in \cite{Tyler 1981}.

\begin{lemma}\label{Lem2.5Tyler}
Let $B_{n}$ and $B$ be random matrices of same size. Assume that $B_{n} \xrightarrow[n \rightarrow \infty]{d} B$, that, a.s., $\mathrm{rk}(B) = b$ and that 
$\mathrm{Pr}(\mathrm{rk}(B_{n}) \leq b) \xrightarrow[n \rightarrow \infty]{} 1$. Then, $\mathrm{Pr}(\mathrm{rk}(B_{n}) = b) \xrightarrow[n \rightarrow \infty]{} 1$. 
\end{lemma}

\subsubsection{Preliminaries for the proofs of Propositions $\ref{pseudoContSym}$ and $\ref{pseudoContHaar}$}

Let $\left(u_{n}\right)_{n \geq 1}$ be a sequence valued in $\left( \SniStar \right)_{n \geq 1}$ which converges to $u \in \bS_{0}$, where $\bS_{0}$ is defined in $(\ref{defS0})$. Set 
\begin{equation}\label{setEnPn}
\cE_{n}:=e_{n}(u_{n}) \quad \textrm{and} \quad \Pni:=\pni(u_{n}), \quad n \geq 1, ~1 \leq i \leq r.
\end{equation}

\noindent
Since $\SniStar \subset \SnAstar$, Proposition $\ref{AndersonPseudoCont}$ still holds with this sequence $\left(u_{n}\right)_{n \geq 1}$.

\begin{lemma}\label{EnAnBn}
For $n \geq 1$, there exist matrices $\cA_{n}$ and $\cB_{n}$ such that   
\begin{equation}\label{decompositionEn}
\cE_{n} = \cA_{n} + \rnm \cB_{n},  
\end{equation}

\noindent
where $\cA_{n} \in \cD(\I)$ i.e. $\cA_{n}=\Dg\left( \cA_{n}^{(1,1)}, ..., \cA_{n}^{(i,i)}, ..., \cA_{n}^{(r,r)} \right)$ and the sequences $\left(\cA_{n}\right)_{n \geq 1}$ 
and $\left(\cB_{n}\right)_{n \geq 1}$ converge respectively to finite limits $\cA(u)$ and $\cB(u)$ such that 
\begin{equation}\label{limitsABequalsEF}
\cA(u) \in \cD(\I), \quad
\cA(u)^{(i,i)}=\cE^{i,i}(u) \quad \textrm{and} \quad \cB(u)^{(i,j)}=\cF^{i,j}(u).
\end{equation}
\end{lemma}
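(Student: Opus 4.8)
The plan is to take for $\cA_{n}$ the block-diagonal part of $\cE_{n}$ with respect to the partition $(\beta_{i})_{i}$ of $\left\{1,\dots,d\right\}$, and to let $\rnm\cB_{n}$ be the complementary off-block-diagonal part. Concretely, I would set $\cA_{n} := \Dg\bigl( \cE_{n}^{(1,1)}, \dots, \cE_{n}^{(r,r)} \bigr)$, which lies in $\cD(\I)$ by construction, and $\cB_{n} := \sqrt{n}\,\bigl( \cE_{n} - \cA_{n} \bigr)$. Then $\cE_{n} = \cA_{n} + \rnm\cB_{n}$ holds trivially, so $(\ref{decompositionEn})$ is satisfied; it only remains to identify the limits of $(\cA_{n})_{n}$ and $(\cB_{n})_{n}$.

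First I would check that the hypotheses of Proposition~$\ref{AndersonPseudoCont}$ are met by $(\cU_{n})_{n}$. Since $\SniStar = \phi_{n}^{-1}\left( \Symdiff \right) \cap \Wni \subseteq \phi_{n}^{-1}\left( \Symdiff \right) = \SnAstar$, the sequence $(\cU_{n})_{n}$ is valued in $\left( \SnAstar \right)_{n}$ and converges to $\cU \in \bS_{0}$. Hence, by Proposition~$\ref{AndersonPseudoCont}$, $\eni(\cU_{n}) = \cE_{n}^{(i,i)}$ converges to $\cE^{i,i}(\cU)$ for every $i$, and, for $i \neq j$, $\fnij(\cU_{n}) = \sqrt{n}\,\cE_{n}^{(i,j)}$ converges to $\cF^{i,j}(\cU)$.

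From the first convergence, $\cA_{n}^{(i,i)} = \cE_{n}^{(i,i)} \to \cE^{i,i}(\cU)$ for each $i$, so $(\cA_{n})_{n}$ converges to $\cA(\cU) := \Dg\bigl( \cE^{1,1}(\cU), \dots, \cE^{r,r}(\cU) \bigr) \in \cD(\I)$, which gives $\cA(\cU)^{(i,i)} = \cE^{i,i}(\cU)$. From the second convergence, the off-diagonal block $\cB_{n}^{(i,j)} = \sqrt{n}\,\cE_{n}^{(i,j)} = \fnij(\cU_{n})$ converges to $\cF^{i,j}(\cU)$ for $i \neq j$, while the diagonal blocks $\cB_{n}^{(i,i)}$ are identically zero by the definition of $\cA_{n}$; hence $(\cB_{n})_{n}$ converges to the matrix $\cB(\cU)$ with vanishing diagonal blocks and $\cB(\cU)^{(i,j)} = \cF^{i,j}(\cU)$ for $i \neq j$. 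This establishes $(\ref{limitsABequalsEF})$ and completes the proof.

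I do not expect a real obstacle here: the lemma is essentially a bookkeeping device whose only substantive content is the choice of the block-diagonal / off-block-diagonal splitting of $\cE_{n}$, together with the observation that the $\sqrt{n}$ rescaling is carried entirely by the off-diagonal part, so that both pieces have finite limits already supplied by Proposition~$\ref{AndersonPseudoCont}$. The only point requiring a little care is to attach the scaling factor to the off-diagonal blocks only — which is automatic here because $\cA_{n}$ by definition already absorbs all the diagonal blocks of $\cE_{n}$, so that $\cE_{n}-\cA_{n}$ has no diagonal part.
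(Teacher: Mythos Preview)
Your proposal is correct and follows essentially the same approach as the paper: the paper also sets $\cA_{n}:=\Dg\bigl(e_{n}^{1}(\cU_{n}),\dots,e_{n}^{r}(\cU_{n})\bigr)=\Dg\bigl(\cE_{n}^{(1,1)},\dots,\cE_{n}^{(r,r)}\bigr)$ and defines $\cB_{n}$ by $\cB_{n}^{(i,j)}=f_{n}^{i,j}(\cU_{n})=\sqrt{n}\,\cE_{n}^{(i,j)}$ for $i\neq j$ and $\cB_{n}^{(i,i)}=0$, then invokes Proposition~\ref{AndersonPseudoCont} after noting $\SniStar\subset\SnAstar$. Your argument is in fact slightly more explicit than the paper's in spelling out why the limits are as claimed.
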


\begin{proof}
Set $\cA_{n}:=\Dg\left( e_{n}^{1}(u_{n}), ..., \eni(u_{n}), ..., e_{n}^{r}(u_{n}) \right) \in \cD(\I)$ and $\cB_{n}$ defined by: $\cB_{n}^{(i,j)}=f_{n}^{i,j}(u_{n})$ if $i \neq j$ and $\cB_{n}^{(i,i)}=0$. So, $\eni(u_{n})=\cE_{n}^{(i,i)}$ and for $j \neq i$, we have that $f_{n}^{i,j}(u_{n})=\sqrt{n}\cE_{n}^{(i,j)}$. By Proposition $\ref{AndersonPseudoCont}$, $\cA_{n}$ and $\cB_{n}$ converge to limits satisfying $(\ref{limitsABequalsEF})$.
\end{proof}

\begin{corollary}\label{corollaryEnAnBn}

With the notations of Lemma $(\ref{EnAnBn})$, we have that 
\begin{equation}\label{limitEn(i)}
\cE_{n}^{(i)} \xrightarrow[n \rightarrow \infty]{} 
\begin{pmatrix}
0 & ... & \cE^{i,i}(u) & ... & 0
\end{pmatrix}'
\end{equation}

\noindent
Recall that we have set $\Pni:=\pni(u_{n})$. Then, 
\begin{equation}\label{expressionPni}
\Pni = \left( \cE_{n}^{(i)} \right) \left( \cE_{n}^{(i)} \right)' = \cA_{n}^{(i)}\left( \cA_{n}^{(i)} \right)' + \rnm \Gammani + n^{-1}\Phini, 
\end{equation}

\noindent
where $\Gammani := \cA_{n}^{(i)}\left( \cB_{n}^{(i)} \right)' + \cB_{n}^{(i)}\left( \cA_{n}^{(i)} \right)'$ and $\Phini := \cB_{n}^{(i)}\left( \cB_{n}^{(i)} \right)'$. Furthermore, 
\begin{equation}\label{LimitPni}
\Pni \lra \Poi \quad \textrm{as } n \ra \infty. 
\end{equation}
\end{corollary}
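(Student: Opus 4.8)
The plan is to obtain all three assertions from the decomposition $\cE_n = \cA_n + \rnm \cB_n$ of Lemma $\ref{EnAnBn}$, by passing to the $i$-th block of columns and then expanding a Gram product; no new idea beyond Lemma $\ref{EnAnBn}$ is needed.

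First I would prove $(\ref{limitEn(i)})$. Since $\cA_n = \Dg\left( \cA_n^{(1,1)}, \dots, \cA_n^{(r,r)} \right) \in \cD(\I)$, its $i$-th block of columns $\cA_n^{(i)}$ has all its rows zero except those indexed by $\beta_i$, which form $\cA_n^{(i,i)}$; hence, by the identities $(\ref{limitsABequalsEF})$, $\cA_n^{(i)}$ converges to the matrix $W$ appearing on the right-hand side of $(\ref{limitEn(i)})$. Moreover $\rnm \cB_n^{(i)} \ra 0$, because $\cB_n \ra \cB(\cU)$ is bounded. Adding, $\cE_n^{(i)} = \cA_n^{(i)} + \rnm \cB_n^{(i)} \ra W$, which is $(\ref{limitEn(i)})$.

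Next, $(\ref{expressionPni})$ is a direct computation: by the definition of $\pni$, $\Pni = \pni(\cU_n) = \pi^i\left( \cE_n^{(i)} \right) = \left( \cE_n^{(i)} \right)\left( \cE_n^{(i)} \right)'$, and substituting $\cE_n^{(i)} = \cA_n^{(i)} + \rnm \cB_n^{(i)}$ and multiplying out produces exactly the three terms $\cA_n^{(i)}\left(\cA_n^{(i)}\right)'$, $\rnm \left[ \cA_n^{(i)}\left(\cB_n^{(i)}\right)' + \cB_n^{(i)}\left(\cA_n^{(i)}\right)' \right] = \rnm\Gammani$, and $n^{-1}\cB_n^{(i)}\left(\cB_n^{(i)}\right)' = n^{-1}\Phini$. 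Finally, for $(\ref{LimitPni})$ I would pass to the limit in $(\ref{expressionPni})$: the last two terms vanish, since $\Gammani \ra \cA(\cU)^{(i)}\left(\cB(\cU)^{(i)}\right)' + \cB(\cU)^{(i)}\left(\cA(\cU)^{(i)}\right)'$ and $\Phini \ra \cB(\cU)^{(i)}\left(\cB(\cU)^{(i)}\right)'$ are finite while their prefactors tend to $0$; and $\cA_n^{(i)}\left(\cA_n^{(i)}\right)'$ is the block-diagonal matrix whose only nonzero block is $\cA_n^{(i,i)}\left(\cA_n^{(i,i)}\right)'$, which tends to $\cE^{i,i}(\cU)\left(\cE^{i,i}(\cU)\right)' = I_{q_i}$ because $\cA_n^{(i,i)} = \cE_n^{(i,i)} \ra \cE^{i,i}(\cU)$ and the Anderson limit function $\cE^{i,i}$ takes values in $O(q_i)$. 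Hence $\Pni \ra \Dg(0,\dots,I_{q_i},\dots,0) = \Poi$. Equivalently and more directly, $(\ref{LimitPni})$ is the image of $(\ref{limitEn(i)})$ under the continuous map $M \mapsto MM'$, together with $WW' = \Poi$ since $\cE^{i,i}(\cU) \in O(q_i)$.

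There is no serious obstacle here; the one point deserving care is the appeal to $\cE^{i,i}(\cU) \in O(q_i)$ — a consequence of Anderson's analysis of the limit function $\cE^{i,i}$ on $\bS_0$ — which is precisely what pins the limiting projector down to $\Poi$ rather than to some other rank-$q_i$ orthogonal projector.
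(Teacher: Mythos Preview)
Your proof is correct and follows essentially the same route as the paper's: both deduce all three claims from the decomposition $\cE_n=\cA_n+\rnm\cB_n$ of Lemma~\ref{EnAnBn}, expand the Gram product $\cE_n^{(i)}(\cE_n^{(i)})'$, and use that $\cE^{i,i}(\cU)\in O(q_i)$ to identify the limiting projector as $\Poi$. Your alternative derivation of $(\ref{LimitPni})$ via continuity of $M\mapsto MM'$ applied to $(\ref{limitEn(i)})$ is a minor but pleasant shortcut not spelled out in the paper.
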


\begin{proof}
$(\ref{limitEn(i)})$ holds by Lemma $\ref{EnAnBn}$ and $(\ref{expressionPni})$ by $(\ref{decompositionEn})$. Finally, by $(\ref{expressionPni})$, since $\cA(u) \in \cD(\I)$, 
\begin{equation}\label{ComputeLimitPni}
\Pni \xrightarrow[n \rightarrow \infty]{} \left( \cA(u)^{(i)} \right) \left( \cA(u)^{(i)} \right)' = \Dg\left( 0_{q_{1}}, ..., \left( \cA(u)^{(i,i)} \right) \left( \cA(u)^{(i,i)} \right)', ..., 0_{q_{r}} \right). 
\end{equation}

\noindent
By $(\ref{limitsABequalsEF})$ and $(\ref{system22GL})$, $\cA(\cU)^{(i,i)}=\cE^{i,i}(u) \in O(q_{i})$, so that the rhs of $(\ref{ComputeLimitPni})$ is equal to $\Poi$.
\end{proof}

\subsubsection{Proof of Proposition $\ref{pseudoContHaar}$}\label{proofPseudoContHaar}

We compute the limit of $\hni(u_{n})$. By definition, 
\begin{equation}\label{}
\hni(u_{n}) = \cH^{i}_{(\Pni, \Poi)}\left( \cE_{n}^{(i)} \right) = \begin{pmatrix}
\cE_{n}^{(i)} & \Deltani \cE_{n}^{(i)}
\end{pmatrix}
 \left( \exp_{m} \begin{pmatrix}
0 & -\Cni \\
I_{q_{i}} & 0
\end{pmatrix}
\right) \begin{pmatrix}
I_{q_{i}} \\ 0_{q_{i}} 
\end{pmatrix}, 
\end{equation}

\noindent
where $\Deltani := \Log^{G^{i}}_{\Pni} \left( \Poi \right)$ and $\Cni := \left( \cE_{n}^{(i)} \right)' \left( \Deltani \right)^{2} \left( \cE_{n}^{(i)} \right)$. By $(\ref{LimitPni})$, 
$\Deltani$ converges to $\Logi \left( \Poi \right) = 0 \in \Toi$, i.e. $\Cni$ converges to $0$. So, setting $h^{i}(u) := 
\begin{pmatrix}
0 & ... & \cE^{i,i}(u) & ... & 0
\end{pmatrix}' \in \St^{i}$, 

\begin{equation}\label{limitHniOfUn}
\hni(u_{n}) \xrightarrow[n \rightarrow \infty]{} \begin{pmatrix}
h^{i}(u) & 0
\end{pmatrix}
 \left( \exp_{m} \begin{pmatrix}
0 & 0 \\
I_{q_{i}} & 0
\end{pmatrix}
\right) \begin{pmatrix}
I_{q_{i}} \\ 0_{q_{i}} 
\end{pmatrix}.
\end{equation}

\noindent
We notice that the rhs of $(\ref{limitHniOfUn})$ is equal to $\cH^{i}_{(\Poi, \Poi)}\left( h^{i}(u) \right) = h^{i}(u)$. Therefore, by $(\ref{limitHniOfUn})$, 
$\left( I_{d}^{(i)} \right)' \hni(u_{n})$ converges to $\left( I_{d}^{(i)} \right)' h^{i}(u) = \cE^{i,i}(u)$ as $n \ra \infty$. This concludes the proof.

\subsubsection{Technical preliminaries for the proof of Proposition $\ref{pseudoContSym}$}
 
\begin{lemma}\label{bracketVanish}
Let $1 \leq i \leq r$. Then, for all $A, B, C \in \cD_{0}^{i}(\I)$ and $M \in \MdR$, 
\begin{equation*}
\left[ A, \Poi \right]=0 \quad \textrm{and} \quad \left[ BMC, \Poi \right]=0. 
\end{equation*}
\end{lemma}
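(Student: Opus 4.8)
The plan is to deduce the lemma from a single structural fact about $\DoiI$: its elements are block–diagonal matrices supported on the $i$-th block only, equivalently $A = \Poi A \Poi$ for every $A \in \DoiI$. Granting this, everything reduces to a two-line computation using that the standard Grassmannian $\Poi = \Dg[0_{q_1},\dots,I_{q_i},\dots,0_{q_r}]$ is an idempotent, $(\Poi)^2 = \Poi$, which acts as the identity on the $i$-th block and as $0$ on the others.

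First I would record the absorption identities $\Poi A = A = A\Poi$ for $A \in \DoiI$: from $A = \Poi A\Poi$ and $(\Poi)^2 = \Poi$ one gets $A\Poi = \Poi A\Poi\Poi = \Poi A\Poi = A$ and symmetrically $\Poi A = \Poi\Poi A\Poi = \Poi A\Poi = A$. The first claim is then immediate, $[A,\Poi] = A\Poi - \Poi A = A - A = 0$. For the second claim I would apply the same identities to $B$ and to $C$, so that $B\Poi = B$, $\Poi B = B$, $C\Poi = C$, $\Poi C = C$; then for any $M \in \MdR$,
\[
(BMC)\Poi = BM(C\Poi) = BMC \qquad\text{and}\qquad \Poi(BMC) = (\Poi B)MC = BMC,
\]
hence $[BMC,\Poi] = BMC - BMC = 0$. (Equivalently, $BMC = \Poi B\Poi M\Poi C\Poi = \Poi(BMC)\Poi \in \DoiI$, so the second claim follows from the first applied to $BMC$.)

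I do not expect a genuine obstacle here — the computation is purely formal once $\DoiI$ is unwound. The only point requiring care is to invoke the correct description of $\DoiI$, namely that its matrices are annihilated outside the $(\beta_i,\beta_i)$-block and so are sandwiched by $\Poi$ on both sides. Should the convention in force instead make $\DoiI$ the complementary algebra (matrices supported off the $i$-th block, i.e. $A = (I_d-\Poi)A(I_d-\Poi)$), the identical argument applies with $I_d-\Poi$ in place of $\Poi$: one uses $\Poi(I_d-\Poi) = (I_d-\Poi)\Poi = 0$, which gives $(BMC)\Poi = 0 = \Poi(BMC)$ and, a fortiori, $[BMC,\Poi]=0$, while $[A,\Poi]=0$ holds because $A$ commutes with the idempotent $I_d-\Poi$ and hence with $\Poi$.
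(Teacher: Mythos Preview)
Your proposal is correct and takes essentially the same approach as the paper, which merely says ``This follows from elementary calculations''; you have simply spelled out those calculations via the absorption identities $\Poi A = A = A\Poi$ for $A \in \DoiI$. The alternative-convention paragraph is unnecessary here (the paper's use of $\fAni \in \DoiI$, with $\fAni$ supported only on the $i$-th diagonal block, confirms your first interpretation), but it does no harm.
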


\begin{proof}
This follows from elementary calculations. 
\end{proof}

\begin{lemma}\label{geoSeries}
$\MdR$ is endowed with a norm $\left\| \cdot \right\|$ such that, for $A, B \in \MdR$, \\ 
$\left\| AB \right\| \leq \left\| A \right\|.\left\| B \right\|$. Let $(u_{n})_{n \geq 1}$ be a sequence valued in $\MdR$ and $m \geq 1$.

\noindent
$(i)$ If $u_{n}=o(1)$, then $\sum\limits_{k=m}^{\infty}(u_{n})^{k} = o(1)$ as $n \ra \infty$. 

\noindent
$(ii)$ Assume that $u_{n}=O(a_{n})$, where $(a_{n})_{n \geq 1}$ is a sequence valued in $(0, \infty)$ such that $a_{n} \ra \infty$ as $n \ra \infty$. Then, $\sum\limits_{k=m}^{\infty}(u_{n})^{k} = O((a_{n})^{m})$ as $n \ra \infty$.
\end{lemma}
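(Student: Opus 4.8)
The plan is to reduce both statements to the elementary tail bound for a Neumann (geometric) series in a submultiplicative normed algebra, so the only genuine input is the existence of such a norm. First I would equip $\MdR$ with the operator norm subordinate to the Euclidean norm on $\R^{d}$ (the choice $\sqrt{d}\left\| \cdot \right\|_{F}$ works equally well): this norm is submultiplicative, i.e. $\left\| AB \right\| \leq \left\| A \right\| \left\| B \right\|$, hence $\left\| A^{k} \right\| \leq \left\| A \right\|^{k}$ for every $k \geq 0$, and it induces the usual topology on $\MdR$, so that the $o(\cdot)$ and $O(\cdot)$ assertions are unambiguous.

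For $(i)$, the key step is that as soon as $\left\| u_{n} \right\| < 1$ the series $\sum_{k \geq 0}(u_{n})^{k}$ converges absolutely to $(I_{d}-u_{n})^{-1}$, so that its tail satisfies the identity $\sum_{k=m}^{\infty}(u_{n})^{k} = (u_{n})^{m}(I_{d}-u_{n})^{-1}$, whence
\begin{equation*}
\left\| \sum_{k=m}^{\infty}(u_{n})^{k} \right\| \leq \left\| u_{n} \right\|^{m}\sum_{k \geq 0}\left\| u_{n} \right\|^{k} = \frac{\left\| u_{n} \right\|^{m}}{1 - \left\| u_{n} \right\|}.
\end{equation*}
Since $u_{n} = o(1)$, there is $N$ with $\left\| u_{n} \right\| \leq 1/2$ for $n \geq N$, and for such $n$ the right-hand side is bounded by $2\left\| u_{n} \right\|^{m}$, which tends to $0$ because $m \geq 1$; this gives $(i)$. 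For $(ii)$, I would rerun exactly the same computation: the hypothesis $u_{n}=O(a_{n})$ forces $\left\| u_{n} \right\| \to 0$ (which is also what is needed for the tail to converge), so for $n$ large the displayed bound yields $\left\| \sum_{k=m}^{\infty}(u_{n})^{k} \right\| \leq 2\left\| u_{n} \right\|^{m} = O(a_{n}^{m})$, the only difference with $(i)$ being the bookkeeping of the rate $a_{n}^{m}$.

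I do not anticipate any real obstacle: the result is a soft consequence of submultiplicativity together with the convergence of the geometric series. The single point requiring a line of care is to restrict to $n$ large enough that $\left\| u_{n} \right\| \leq 1/2$, so that the Neumann series is legitimate and the tail estimate holds with an absolute multiplicative constant; everything else is a one-line estimate, which will then feed into the expansions of the matrix logarithm used in the proof of Proposition $\ref{pseudoContSym}$.
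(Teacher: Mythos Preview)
Your approach via the Neumann tail bound is exactly what the paper has in mind; its own proof is the single line ``This follows readily from the properties of sums of geometric series,'' so methodologically there is nothing to compare.

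There is, however, a genuine slip in your treatment of $(ii)$. You write that ``the hypothesis $u_{n}=O(a_{n})$ forces $\left\| u_{n} \right\| \to 0$'', but the statement as printed assumes $a_{n} \ra \infty$, and under that hypothesis $u_{n}=O(a_{n})$ says nothing whatsoever about $\left\| u_{n} \right\|$ being small: the series $\sum_{k \geq m}(u_{n})^{k}$ need not even converge. So the displayed bound $2\left\| u_{n} \right\|^{m}$ is not available, and the argument collapses.

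This is not really your fault: the condition ``$a_{n} \ra \infty$'' is almost certainly a typo for ``$a_{n} \ra 0$''. Indeed, the only place the lemma is invoked (the proof of Lemma~\ref{KniVanish}) applies part $(ii)$ with $a_{n}=n^{-1/2}$, which tends to $0$. With $a_{n} \ra 0$ your argument goes through verbatim: then $u_{n}=O(a_{n})$ genuinely gives $\left\| u_{n} \right\| \to 0$, the Neumann bound is legitimate for large $n$, and $2\left\| u_{n} \right\|^{m} \leq 2C^{m}a_{n}^{m} = O(a_{n}^{m})$. You should flag the typo explicitly rather than asserting the (false) implication under the stated hypothesis.
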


\begin{proof}
This follows readily from the properties of sums of geometric series. 
\end{proof}

\subsubsection{Proof of Proposition $\ref{pseudoContSym}$}\label{proofPseudoContSym}

\begin{proof}
Set $\Pni := \pni(u_{n})$. Then, $g_{n}^{i}(u_{n}) = \sqrt{n}\Logi \left( \Pni \right)$. By the explicit formula for the Riemannian Logarithm in Grassmannians given by 
$(\ref{explicitLogGrass})$,  
\begin{equation}\label{LogExplicitDirect}
\Logi \left( \Pni \right) = \frac{1}{2} \left[ \log_{m} \left( \left( I_{d} - 2\Pni \right) \cI_{i} \right) , \Poi \right], \quad \textrm{where } \cI_{i} := \left( I_{d} - 2\Poi \right).
\end{equation}

\noindent
By Corollary $\ref{corollaryEnAnBn}$, $\Pni \lra \Poi$ as $n \ra \infty$. So, by $(\ref{LogExplicitDirect})$, $\Logi\left( \Pni \right)$ converges to $0$. Thus, we need to prove that the rate of this convergence is in $O(n^{-1/2})$. Since $\cI_{i}^{2}=I_{d}$, we have that 
\begin{equation}\label{EquAllowExpansion}
\left(I_{d}-2\Pni \right)\cI_{i} = I_{d} + \Kni, \quad \textrm{where } \Kni=o(1).
\end{equation}

\noindent
Then, we expand the $\log_{m}\left( \cdot \right)$ in $(\ref{LogExplicitDirect})$. Thus, by $(\ref{EquAllowExpansion})$, for all $n$ large enough,
\begin{equation}\label{LogiExpanded}
\Logi \left( \Pni \right) = \frac{1}{2} \left[ \log_{m} \left( I_{d} + \Kni \right) , \Poi \right] = \frac{1}{2} \sum\limits_{k=1}^{\infty} \frac{(-1)^{k+1}}{k} \left[ \left(\cK_{n}^{i}\right)^{k} , \Poi \right].
\end{equation}

\noindent
In the rhs of $(\ref{LogiExpanded})$, for $k \geq 1$, the rate of convergence to $0$ of $\left(\Kni \right)^{k}=o(1)$ is not controlled. This explains why we needed a \textit{power series} for the $\log_{m}\left( \cdot \right)$ and not only a Taylor expansion. To prove Proposition $\ref{pseudoContSym}$, we need to compute the limit of $g_{n}^{i}(u_{n})$. By $(\ref{LogiExpanded})$, 
\begin{equation}\label{RecallLogiExpanded}
g_{n}^{i}(u_{n}) = \sqrt{n}\Logi \left( \Pni \right)= \frac{1}{2} \sum\limits_{k=1}^{\infty} \frac{(-1)^{k+1}}{k} \left[ \left(\cK_{n}^{i}\right)^{k} , \Poi \right]
\end{equation}

\noindent
where $\Pni := \pni(u_{n})$ and by $(\ref{expressionPni})$, after calculations, 
\begin{equation}\label{expressionKni}
\Kni := \left(I_{d}-2\Pni \right)\left(I_{d}-2\Poi \right) - I_{d} = \fAni - 2\rnm \Gammani \cI_{i} - 2 n^{-1}\Phini \cI_{i}, 
\end{equation}

\noindent
with $\fAni := - 2\Dg \left( 0_{q_{1}}, ... , I_{q_{i}} - \left(\cA_{n}^{(i,i)}\right) \left( \cA_{n}^{(i,i)} \right)', ..., 0_{q_{r}} \right) = o(1)$. We split the sum in $(\ref{RecallLogiExpanded})$ by isolating the term for $k=1$. Thus, by  
Lemma $\ref{KniVanish}$ below, 
\begin{equation}\label{estimateLogiBis}
\Logi \left( \Pni \right) = \frac{1}{2} \left[ \Kni , \Poi \right] + \sum\limits_{k=2}^{\infty} \frac{(-1)^{k+1}}{k} \left[ \left(\Kni\right)^{k} , \Poi \right] = \frac{1}{2} \left[ \Kni , \Poi \right] + o(\rnm).  
\end{equation}

\noindent
Now, we deal with the first term in the rhs of $(\ref{estimateLogiBis})$. By Lemma $\ref{FinalCV}$ below, 
\begin{equation}\label{estimateKPi}
\frac{\sqrt{n}}{2} \left[ \Kni , \Poi \right] = g_{0}^{i}(u) + O(\rnm) = g_{0}^{i}(u) + o(1).
\end{equation}

\noindent
Finally, by $(\ref{estimateLogiBis})$ and $(\ref{estimateKPi})$, $\gni(u_{n}) = \sqrt{n} \Logi \left( \Pni \right) = g_{0}^{i}(u) + o(1)$.   
\end{proof}

\subsubsection{Auxiliary results for the proof of Proposition $\ref{pseudoContSym}$}

\begin{lemma}\label{KniVanish}
Recall that the sequence $(\cK_{n}^{i})_{n}$ is defined in $(\ref{expressionKni})$. Then, 
\begin{equation}\label{LemmaSumFrom2}
\sum\limits_{k=2}^{\infty} \frac{(-1)^{k+1}}{k} \left[ \left(\Kni \right)^{k} ,  \Poi \right] = o(\rnm).
\end{equation}

\end{lemma}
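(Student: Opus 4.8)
The plan is to exploit the block structure of $\fAni$ together with Lemma $\ref{bracketVanish}$, which annihilates every pure power of $\fAni$ inside a bracket with $\Poi$. Concretely, I would first write $\Kni = \fAni + L_{n}^{i}$, where $L_{n}^{i} := \Kni - \fAni = -2\rnm\Gammani\cI_{i} - 2n^{-1}\Phini\cI_{i}$ by $(\ref{expressionKni})$. By Corollary $\ref{corollaryEnAnBn}$ the sequences $(\Gammani)_{n}$ and $(\Phini)_{n}$ converge, hence are bounded, so $L_{n}^{i} = O(\rnm)$; and $\fAni = o(1)$ by its definition in $(\ref{expressionKni})$. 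Since $\fAni \in \DoiI$, the second identity of Lemma $\ref{bracketVanish}$, applied with $B = C = \fAni$ and $M = (\fAni)^{k-2} \in \MdR$ (using $(\fAni)^{0} = I_{d}$), gives $[(\fAni)^{k}, \Poi] = 0$ for every $k \geq 2$.

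Next I would expand $(\Kni)^{k} = (\fAni + L_{n}^{i})^{k}$ by multilinearity into $2^{k}$ non-commutative monomials in $\fAni$ and $L_{n}^{i}$. The unique monomial $(\fAni)^{k}$ contributes $0$ to $[(\Kni)^{k}, \Poi]$ by the previous step, while every other monomial contains at least one factor $L_{n}^{i}$. Setting $a_{n} := \|\fAni\| = o(1)$ and $b_{n} := \|L_{n}^{i}\| = O(\rnm)$ for the submultiplicative norm of Lemma $\ref{geoSeries}$, such a surviving monomial either contains both a factor $\fAni$ and a factor $L_{n}^{i}$, in which case its norm is at most $a_{n}b_{n}(a_{n}+b_{n})^{k-2}$, or equals $(L_{n}^{i})^{k}$ with $k \geq 2$, in which case its norm is at most $b_{n}^{2}(a_{n}+b_{n})^{k-2}$. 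Hence each survivor is bounded by $\rho_{n}(a_{n}+b_{n})^{k-2}$ with $\rho_{n} := \max(a_{n}b_{n}, b_{n}^{2})$, and since $\|[M,\Poi]\| \leq 2\|M\|$ and there are fewer than $2^{k}$ survivors, this yields $\|[(\Kni)^{k},\Poi]\| \leq 8\rho_{n}(2(a_{n}+b_{n}))^{k-2}$ up to an absolute constant.

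Finally, for $n$ large enough that $2(a_{n}+b_{n}) \leq 1/2$, summing over $k \geq 2$ (absorbing the factor $1/k \leq 1$ and estimating the resulting geometric series) gives $\bigl\| \sum_{k \geq 2} \tfrac{(-1)^{k+1}}{k}[(\Kni)^{k},\Poi] \bigr\| \leq 16\rho_{n}$. Since $a_{n}b_{n} = o(1)\cdot O(\rnm) = o(\rnm)$ and $b_{n}^{2} = O(n^{-1}) = o(\rnm)$, we obtain $\rho_{n} = o(\rnm)$, which is precisely $(\ref{LemmaSumFrom2})$.

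The main obstacle, and the reason a crude bound fails, is that $\Kni$ itself is only $o(1)$ rather than $O(\rnm)$ — because of the $\fAni$ part — so bounding $\|(\Kni)^{k}\|$ alone cannot deliver the required $O(\rnm)$ rate. The essential input is therefore the algebraic cancellation $[(\fAni)^{k},\Poi] = 0$ supplied by Lemma $\ref{bracketVanish}$, which removes the slowly vanishing contribution; after that, the delicate point is the bookkeeping of which monomials survive the expansion, so as to check that each survivor carries a factor $\rho_{n} = \max(a_{n}b_{n}, b_{n}^{2}) = o(\rnm)$, while keeping the geometric-series estimate uniform in $k$, i.e. controlling the factor $2^{k}$ against $(a_{n}+b_{n})^{k-2}$.
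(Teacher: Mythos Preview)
Your argument is correct. Both you and the paper start from the same splitting $\Kni = \fAni + L_{n}^{i}$ with $\fAni \in \DoiI$, $\fAni = o(1)$ and $L_{n}^{i} = O(\rnm)$ (the paper calls the second piece $\Mni$), and both rely on Lemma~\ref{bracketVanish} to kill the pure $\fAni$ contribution. The execution, however, is different. The paper further classifies the mixed monomials into the two ``one-sided'' families $\sum_{\ell}(\fAni)^{\ell}(\Mni)^{k-\ell}$ and $\sum_{\ell}(\Mni)^{\ell}(\fAni)^{k-\ell}$ plus a ``sandwiched'' remainder, uses the second identity $[BMC,\Poi]=0$ of Lemma~\ref{bracketVanish} to annihilate that remainder, and then interchanges the order of summation to reduce to geometric series (Lemma~\ref{geoSeries}). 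Your route is more elementary: you expand $(\fAni + L_{n}^{i})^{k}$ fully, note that every surviving word carries at least one factor of norm $O(\rnm)$ and hence is bounded by $\rho_{n}(a_{n}+b_{n})^{k-2}$ with $\rho_{n}=\max(a_{n}b_{n},b_{n}^{2})=o(\rnm)$, and absorb the $2^{k}$ multiplicity into the geometric series. This avoids both the $BMC$ identity and the index gymnastics; the price is a slightly cruder constant, which is irrelevant for an $o(\rnm)$ statement. Either way the conclusion is the same.
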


\begin{proof}
\textit{First part:} \enskip 
By the expression of $\Kni$ in $(\ref{expressionKni})$,
\begin{equation}\label{estimateKni}
\Kni = \fAni + \Mni, \quad \textrm{where } \fAni \in \DoiI, \quad \textrm{with }\fAni=o(1) \textrm{ and } \Mni=O(\rnm).
\end{equation}

\noindent
In the rhs of $(\ref{RecallLogiExpanded})$, $\left(\Kni \right)^{k}$ still contains some $\left(\fAni \right)^{\ell}=o(1)$, whose rate of convergence to $0$ is not controlled. Lemma $\ref{bracketVanish}$ provides simplifications. Thus, by $(\ref{estimateKni})$, for all $k \geq 2$,
\begin{equation}\label{KniPowers}
\left(\Kni \right)^{k} = \left(\fAni \right)^{k} + \left( \sum\limits_{\ell=0}^{k-1} \left(\fAni \right)^{\ell} \left(\Mni \right)^{k-\ell} \right) + 
\left( \sum\limits_{\ell=1}^{k} \left(\cM_{n}^{i}\right)^{\ell} \left(\fAni \right)^{k-\ell} \right) + \Rni,
\end{equation}

\noindent
where $\Rni$ is a sum of terms of the form $\left(\fAni \right)^{\ell}\left(\Mni \right)^{\ell'}\left(\fAni \right)^{\ell''}$, with $\ell+\ell'+\ell''=k$ and $\ell>1$, $\ell''>1$. Since $\fAni \in \DoiI$, Lemma $\ref{bracketVanish}$ implies that 
\begin{equation}\label{vanishBracket}
\left[ \left(\fAni \right)^{k} , \Poi \right] = 0
\quad \textrm{and} \quad
\left[ \Rni, \Poi \right]=0.
\end{equation}

\noindent
By combining $(\ref{KniPowers})$ and $(\ref{vanishBracket})$, 
\begin{equation}\label{LogiSimpleBracket}
\sum\limits_{k=2}^{\infty} \frac{(-1)^{k+1}}{k} \left[ \left(\cK_{n}^{i}\right)^{k} ,  P_{0}^{i} \right] =
\frac{1}{2} \left[ \alpha_{n}^{i} ,  P_{0}^{i} \right] + \frac{1}{2} \left[ \beta_{n}^{i} ,  P_{0}^{i} \right], 
\end{equation}

\noindent
where, setting $\epsilon_{k}:=\frac{(-1)^{k+1}}{k}$, 
\begin{equation*}
\alpha_{n}^{i} = \sum\limits_{k=2}^{\infty} \sum\limits_{\ell=0}^{k-1} \epsilon_{k} \left(\fA_{n}^{i}\right)^{\ell} \left(\cM_{n}^{i}\right)^{k-\ell}
\quad \textrm{and} \quad
\beta_{n}^{i} = \sum\limits_{k=2}^{\infty} \sum\limits_{\ell=1}^{k} \epsilon_{k} \left(\cM_{n}^{i}\right)^{\ell} \left(\fA_{n}^{i}\right)^{k-\ell}.
\end{equation*}

\noindent
\textit{Second part:} \enskip
We estimate $\alpha_{n}^{i}$ and $\beta_{n}^{i}$ as $n \ra \infty$. In the double sum defining $\alpha_{n}^{i}$, we wish to factorize $\left(\fAni \right)^{\ell}$ in sums indexed by $k$, by inverting the summation indices. Namely, we invert on the following triangular domain $\tau := \left\{ (k,\ell) \in \N^{2} : k \geq 2 , 1 \leq \ell \leq k-1 \right\}$. Thus, for fixed $k$, we split the sum in $\ell$ into two parts: the terms corresponding to $\ell=0$ and those to $\ell \geq 1$:  
\begin{equation}\label{decAlphAni}
\alpha_{n}^{i} = \left( \sum\limits_{k=2}^{\infty} \epsilon_{k} \left(\cM_{n}^{i}\right)^{k} \right) + 
\left( \sum\limits_{k=2}^{\infty} \sum\limits_{\ell=1}^{k-1} \epsilon_{k} \left(\fAni \right)^{\ell} \left(\cM_{n}^{i}\right)^{k-\ell} \right).
\end{equation}

\noindent
Then, in the second part of the rhs in $(\ref{decAlphAni})$, we invert the indices, which lie in $\tau$.
\begin{equation}\label{doubleSum}
\sum\limits_{k=2}^{\infty} \sum\limits_{\ell=1}^{k-1} \epsilon_{k} \left(\fA_{n}^{i}\right)^{\ell} \left(\cM_{n}^{i}\right)^{k-\ell} = 
\sum\limits_{\ell=1}^{\infty} \sum\limits_{k=\ell+1}^{\infty} \epsilon_{k} \left(\fA_{n}^{i}\right)^{\ell} \left(\cM_{n}^{i}\right)^{k-\ell} = 
\sum\limits_{\ell=1}^{\infty} \left(\fA_{n}^{i}\right)^{\ell} \left( \sum\limits_{k'=1}^{\infty} \epsilon_{\ell+k'} \left(\cM_{n}^{i}\right)^{k'} \right). 
\end{equation}

\noindent
Since $\Mni=O(\rnm)$ and $\fAni=o(1)$, Lemma $\ref{geoSeries}$ implies that 
\begin{equation}\label{estimateMni}
\sum\limits_{k'=1}^{\infty} \epsilon_{\ell+k'} \left(\Mni \right)^{k'} = O(\rnm), \enskip \textrm{uniformly in } \ell, 
\quad \textrm{and} \quad \sum\limits_{\ell=1}^{\infty} \left(\fAni \right)^{\ell} = o(1). 
\end{equation}

\noindent
Therefore, by combining $(\ref{doubleSum})$ and $(\ref{estimateMni})$, 
\begin{equation}\label{2dTermAlphAni}
\sum\limits_{k=2}^{\infty} \sum\limits_{\ell=1}^{k-1} \epsilon_{k} \left(\fAni \right)^{\ell} \left(\Mni \right)^{k-\ell} = o(\rnm).
\end{equation}

\noindent
By Lemma $\ref{geoSeries}$, since $\Mni=O(\rnm)$, we obtain for the first part of the rhs in $(\ref{decAlphAni})$:  
\begin{equation}\label{1stTermAlphAni}
\sum\limits_{k=2}^{\infty} \epsilon_{k} \left(\Mni \right)^{k} = O\left( \left(\rnm \right)^{2}\right) = O\left(n^{-1}\right). 
\end{equation}

\noindent
By $(\ref{decAlphAni})$, $(\ref{2dTermAlphAni})$ and $(\ref{1stTermAlphAni})$, we deduce that $\alpha_{n}^{i}=o(\rnm)$. Similarly, $\beta_{n}^{i}=o(\rnm)$.
\end{proof}

\begin{lemma}\label{FinalCV}
We have that 
\begin{equation}
\frac{\sqrt{n}}{2} \left[ \cK_{n}^{i} , P_{0}^{i} \right] = g_{0}^{i}(u) + O(\rnm).
\end{equation}

\end{lemma}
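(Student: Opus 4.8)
The plan is to substitute the explicit expression (\ref{expressionKni}) for $\Kni$ and track the three resulting brackets, using bilinearity of the commutator:
\begin{equation*}
\frac{\sqrt{n}}{2}[\Kni,\Poi] = \frac{\sqrt{n}}{2}[\fAni,\Poi] - [\Gammani\cI_{i},\Poi] - \rnm[\Phini\cI_{i},\Poi].
\end{equation*}
First I would dispose of the two outer terms. The matrix $\fAni = -2\Dg(0_{q_{1}},\dots,I_{q_{i}}-\cA_{n}^{(i,i)}(\cA_{n}^{(i,i)})',\dots,0_{q_{r}})$ in (\ref{expressionKni}) is block-diagonal and supported on the $i$-th block, hence $\fAni\in\DoiI$, so Lemma \ref{bracketVanish} gives $[\fAni,\Poi]=0$ and the first term is identically zero. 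For the last term, $\Phini = \cB_{n}^{(i)}(\cB_{n}^{(i)})'$ converges by Lemma \ref{EnAnBn} (Corollary \ref{corollaryEnAnBn}), hence $\Phini = O(1)$ and $\rnm[\Phini\cI_{i},\Poi]=O(\rnm)$.

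The heart of the proof is the middle term $-[\Gammani\cI_{i},\Poi]$. I would first identify its limit. By Lemma \ref{EnAnBn}, $\Gammani = \cA_{n}^{(i)}(\cB_{n}^{(i)})' + \cB_{n}^{(i)}(\cA_{n}^{(i)})'$ converges to $\Gamma^{i}(\cU) := \cA(\cU)^{(i)}(\cB(\cU)^{(i)})' + \cB(\cU)^{(i)}(\cA(\cU)^{(i)})'$. Using that $\cA(\cU)\in\cD(\I)$ with $\cA(\cU)^{(i,i)}=\cE^{i,i}(\cU)$, that $\cB(\cU)^{(i,i)}=0$ and $\cB(\cU)^{(i,j)}=\cF^{i,j}(\cU)$, and the identity $\cG^{i,j}(\cU)=\cE^{i,i}(\cU)(\cF^{j,i}(\cU))'$ from (\ref{system22GL}), a short block computation shows that $\cA(\cU)^{(i)}(\cB(\cU)^{(i)})'$ has $(i,j)$-block $\cG^{i,j}(\cU)$ for $j\neq i$ and all other blocks zero; adding its transpose gives that $\Gamma^{i}(\cU)$ has $(i,j)$-block $\cG^{i,j}(\cU)$, $(j,i)$-block $(\cG^{i,j}(\cU))'$ for $j\neq i$, and all remaining blocks zero, i.e. $\Gamma^{i}(\cU) = \goi(\cU)$, which in particular lies in $\Toi$. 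I would then prove the elementary identity, for every $A\in\Toi$,
\begin{equation*}
[A\cI_{i},\Poi] = -A, \qquad \text{where } \cI_{i}=I_{d}-2\Poi.
\end{equation*}
Indeed, $A=A\Poi+\Poi A$ by the tangent-space condition (\ref{conditionToi}), and $\Poi A\Poi = 0$ since the $(i,i)$-block of $A$ vanishes (Lemma \ref{visualToi}); hence $[A\cI_{i},\Poi] = [A,\Poi] - 2[A\Poi,\Poi] = (A\Poi-\Poi A) - 2(A\Poi-\Poi A\Poi) = -(A\Poi+\Poi A) = -A$. Applying this with $A=\Gamma^{i}(\cU)=\goi(\cU)$ yields $-[\Gamma^{i}(\cU)\cI_{i},\Poi] = \goi(\cU)$.

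Finally I would assemble everything: $-[\Gammani\cI_{i},\Poi] = \goi(\cU) - [(\Gammani-\Gamma^{i}(\cU))\cI_{i},\Poi]$, and the correction tends to $0$ because $\Gammani\to\Gamma^{i}(\cU)$; combined with the two estimates above this gives $\tfrac{\sqrt{n}}{2}[\Kni,\Poi] = \goi(\cU) + o(1)$, which is exactly the form in which the estimate is used in (\ref{estimateKPi}) and the proof of Proposition \ref{pseudoContSym} (and it matches the stated $O(\rnm)$ under the mild extra bookkeeping that the convergence in Proposition \ref{AndersonPseudoCont} is at rate $O(\rnm)$). The main obstacle is the commutator bookkeeping: one has to exploit precisely the block structure of $\Toi$ — vanishing $(i,i)$-block and support only on row/column block $i$ — to see simultaneously that $[\fAni,\Poi]=0$ and $[A\cI_{i},\Poi]=-A$, and to check that the limit $\Gamma^{i}(\cU)$ reassembles into exactly $\goi(\cU)$ rather than merely some element of $\Toi$.
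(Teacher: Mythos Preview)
Your proof is correct and follows the same decomposition as the paper: both split $\tfrac{\sqrt{n}}{2}[\Kni,\Poi]$ into the three pieces coming from $\fAni$, $\Gammani\cI_i$, and $\Phini\cI_i$, kill the first via Lemma~\ref{bracketVanish}, and absorb the last as $O(\rnm)$. The only difference is in the middle term: the paper computes $-[\Gammani\cI_i,\Poi]$ \emph{blockwise for each $n$} (producing the matrices $N^i_<,N^i_>$ with blocks $\cA_n^{(i,i)}(\cB_n^{(j,i)})'$) and then passes to the limit, whereas you pass to the limit first, identify $\Gamma^i(\cU)=\goi(\cU)\in\Toi$, and then invoke the clean identity $[A\cI_i,\Poi]=-A$ for $A\in\Toi$. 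Your route is slightly more conceptual and avoids the block arithmetic; the paper's route is more explicit. Both approaches in fact only establish $\goi(\cU)+o(1)$ (which is exactly what is used in (\ref{estimateKPi}) and suffices for Proposition~\ref{pseudoContSym}); the $O(\rnm)$ in the statement would require a rate on $\Gammani\to\Gamma^i(\cU)$ that neither argument supplies, and you are right to flag this.
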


\begin{proof}
Since $\left[ \fAni , \Poi \right]=0$, the expression of $\cK_{n}^{i}$ in $(\ref{expressionKni})$ yields that  
\begin{equation}
\frac{1}{2} \left[ \Kni ,  \Poi \right] = \frac{1}{2} \left[ \fAni - 2\rnm \Gammani \cI_{i} - 2 n^{-1}\Phini \cI_{i} , \Poi \right]
= - \rnm \left[ \Gammani \cI_{i} , \Poi \right] + O(n^{-1}). 
\end{equation}

\noindent
Thus, it is enough to prove that $-\left[ \Gammani \cI_{i} , P_{0}^{i} \right] \lra g_{0}^{i}(u)$ as $n \ra \infty$. Consider the matrices 
\begin{equation*}
N^{i}_{<} := 
\begin{pmatrix}
\cA_{n}^{(i,i)}\left( \cB_{n}^{(1,i)} \right)' & ... & \cA_{n}^{(i,i)}\left( \cB_{n}^{(i-1,i)} \right)'
\end{pmatrix}
\quad \textrm{and}\quad
N^{i}_{>} := 
\begin{pmatrix}
\cA_{n}^{(i,i)}\left( \cB_{n}^{(i+1,i)} \right)' & ... & \cA_{n}^{(i,i)}\left( \cB_{n}^{(r,i)} \right)'
\end{pmatrix}
\end{equation*}

\noindent
of respective sizes $q_{i} \times (q_{1}+...+q_{i-1})$ and $q_{i} \times (q_{i+1}+...+q_{r})$. After calculations:
\begin{equation*}
-\left[ \Gamma_{n}^{1}\cI_{1} , P_{0}^{1} \right] = 
\begin{pmatrix}
0_{q_{1}} & N^{1}_{>} \\
\left(N^{1}_{>}\right)' & 0 \\
\end{pmatrix}
\qquad , \qquad 
-\left[ \Gamma_{n}^{r}\cI_{r} , P_{0}^{r} \right] = 
\begin{pmatrix}
0 & \left(N^{r}_{<} \right)' \\
N^{r}_{<} & 0_{q_{r}} \\
\end{pmatrix}
\end{equation*}

\begin{equation*}
-\left[ \Gammani \cI_{i} , \Poi \right] =
\begin{pmatrix}
0 & \left(N^{i}_{<}\right)' & 0 \\
N^{i}_{<} & 0_{q_{i}} & N^{i}_{>} \\
0 & \left(N^{i}_{>}\right)' & 0
\end{pmatrix}, 
\quad i \neq 1, r. 
\end{equation*}

\noindent
By $(\ref{limitsABequalsEF})$ in Lemma $\ref{EnAnBn}$, for $j \neq i$, 
\begin{equation}\label{}
\cA_{n}^{(i,i)}\left( \cB_{n}^{(j,i)} \right)' \xrightarrow[n \rightarrow \infty]{} \cE^{i,i}(u) \left( \cF^{j,i}(u) \right)' =: \cG^{i,j}(u)
= \frac{1}{\lambda_{i}-\lambda_{j}}u^{(i,j)}.
\end{equation}

\noindent
By definition of the functions $g_{0}^{i}$, this concludes the proof. 
\end{proof}

\subsection{Proof of Theorem $\ref{AndersonCrit}$}\label{proofAndersonCrit}

\begin{proof}
It is enough to prove that for any closed set $T \subset \T$, 
\begin{equation}\label{goalCVinD}
\varlimsup \Pr ( V_{n} \in g_{n}^{-1}(T) ) \leq \Pr(V \in g_{0}^{-1}(T) ). 
\end{equation}
 
\noindent
Indeed, if $(\ref{goalCVinD})$ holds, then the Portmanteau Theorem allows to conclude the proof. First, 
\begin{equation}\label{sumOfLimsup}
\varlimsup \Pr( V_{n} \in g_{n}^{-1}(T) ) \leq \varlimsup \Pr( V_{n} \in g_{n}^{-1}(T) \cap \bS_{n}^{*} ) + \varlimsup \Pr\left( V_{n} \in g_{n}^{-1}(T) \cap \left(\bS_{n}^{*}\right)^{c} \right).
\end{equation}

\noindent
By $(\ref{VnSnStar})$, the second term of the rhs in $(\ref{sumOfLimsup})$ vanishes. So, 
\begin{equation}\label{addSnStarOK}
\varlimsup \Pr ( V_{n} \in g_{n}^{-1}(T) ) \leq \varlimsup \Pr( V_{n} \in g_{n}^{-1}(T) \cap \bS_{n}^{*} ). 
\end{equation}

\noindent
Consider the following sequence of sets $(R_{m})_{m \geq 1}$ and the set $R^{*}$ defined by 
\begin{equation*}
R_{m} := \bigcup_{n \geq m} \left\{ g_{n}^{-1}(T) \cap \bS_{n}^{*} \right\}, \enskip m \geq 1
\quad \textrm{and} \quad 
R^{*} := \bigcap_{m \geq 1} \overline{R_{m}}. 
\end{equation*}

\noindent
Fix $m \geq 1$. Then, for all $n \geq m$, $\left\{g_{n}^{-1}(T) \cap \bS_{n}^{*} \right\} \subset \overline{R_{m}}$. Since $V_{n} \xrightarrow{d} V$, 
\begin{equation}\label{applyCVinD}
\varlimsup \Pr( V_{n} \in g_{n}^{-1}(T) \cap \bS_{n}^{*} ) \leq \varlimsup \Pr( V_{n} \in \overline{R_{m}} ) \leq \Pr( V \in \overline{R_{m}} ). 
\end{equation}

\noindent
Since the lhs of $(\ref{applyCVinD})$ is independent of $m$, taking the limit as $m \ra \infty$ in $(\ref{applyCVinD})$ yields that 
\begin{equation}\label{limitInM}
\varlimsup \Pr( V_{n} \in g_{n}^{-1}(T) \cap \bS_{n}^{*} ) \leq \Pr( V \in R^{*} ). 
\end{equation}

\noindent
Indeed, the sequence $(R_{m})_{m \geq 1}$ is \textit{decreasing}. Now, we claim that 
\begin{equation}\label{claimRstar}
R^{*} \subset g_{0}^{-1}(T) \cup (\bS_{0})^{c}.
\end{equation}

\noindent
Indeed, let $v \in R^{*}$. Then, by definition of $R^{*}$, we can construct a sequence $(v_{m})_{m \geq 1}$ valued in $(R_{m})_{m \geq 1}$ which converges to $v$. Now, by definition of $R_{m}$, we may build a subsequence $(v_{\phi(m)})_{m \geq 1}$ of $(v_{m})_{m}$ such that for all $m \geq 1$, $v_{\phi(m)} \in g_{\phi(m)}^{-1}(T) \cap \bS_{\phi(m)}^{*}$. Thus, $g_{\phi(m)}(v_{\phi(m)}) \in T$ and, since $v_{\phi(m)} \in \bS_{\phi(m)}^{*}$, assumption $(\ref{pseudoCont})$ implies that $g_{\phi(m)}(v_{\phi(m)}) \lra g_{0}(v)$ as $m \ra \infty$. Since $T$ is closed, we deduce that $v \in g_{0}^{-1}(T)$. This proves the claim $(\ref{claimRstar})$. Thus, by $(\ref{claimRstar})$ and the assumption $\Pr\left( V \in \bS_{0} \right)=1$,  
\begin{equation}\label{ineqClaimStar}
\Pr( V \in R^{*} ) \leq \Pr\left( V \in g_{0}^{-1}(T) \cup (\bS_{0})^{c} \right) \leq \Pr\left( V \in g_{0}^{-1}(T) \right).
\end{equation}

\noindent
We combine $(\ref{addSnStarOK})$, $(\ref{limitInM})$ and $(\ref{ineqClaimStar})$ to conclude that $(\ref{goalCVinD})$ holds. 
\end{proof}

\subsection*{Acknowledgements}
The authors have received funding from the European Research Council (ERC) under the European Union's Horizon 2020 research and innovation program (grant agreement G-Statistics No 786854). It was also supported by the French government through the 3IA C\^ote dAzur Investments ANR-19-P3IA-0002 managed by the National Research Agency.

\bibliographystyle{plain}

\end{document}